\DeclareRobustCommand\dout{\bgroup
 \markoverwith{\lower-0.6ex\hbox
 {\kern-0.02em\vbox{\hrule width.2em\kern0.35ex\hrule}\kern-.06em}}%
 \ULon}
\newcommand{\distas}[1]{\mathbin{\overset{#1}{\kern\z@\sim}}}%
\newsavebox{\mybox}\newsavebox{\mysim}
\newcommand{\distras}[1]{%
  \savebox{\mybox}{\hbox{\kern3pt$\scriptstyle#1$\kern3pt}}%
  \savebox{\mysim}{\hbox{$\sim$}}%
  \mathbin{\overset{#1}{\kern\z@\resizebox{\wd\mybox}{\ht\mysim}{$\sim$}}}%
}
\newtheoremstyle{problemstyle}  							% <name>
        {3pt}                                               % <space above>
        {3pt}                                               % <space below>
        {\normalfont}                               		% <body font>
        {}                                                  % <indent amount}
        {\bfseries}                 						% <theorem head font>
        {\normalfont\bfseries.}         					% <punctuation after theorem head>
        {.5em}                                          	% <space after theorem head>
        {}       
\theoremstyle{plain}
\newtheorem*{remark}{Remark}
\newcommand*{\rom}[1]{\expandafter\@slowromancap\romannumeral #1@}
\newcommand{\mathleft}{\@fleqntrue\@mathmargin0pt}
\newcommand{\mathcenter}{\@fleqnfalse}
\newtheorem{theorem}{Theorem}[section]
\newtheorem{definition}[theorem]{Definition}
\newtheorem{proposition}[theorem]{Proposition}
\newtheorem{corollary}[theorem]{Corollary}
\newtheorem{example}[theorem]{Example}
\newtheorem{assumption}{Assumption}
\newtheorem{lemma}[theorem]{Lemma}
\def\E{\mathbb{E}}
\def\Ex{\mathop{\mathbb{E}}}
\def\Cov{\mathrm{Cov}}
\def\Covx{\mathop{\mathrm{Cov}}}
\def\Varx{\mathop{\mathrm{Var}}}
\def\R{\mathbb{R}}
\def\Q{\mathbb{Q}}
\def\P{\mathbb{P}}
\def\N{\mathcal{N}}
\def\E{\mathbb{E}}
\def\sN{\mathcal{N}}
\def\sP{\mathcal{P}}
\def\sX{\mathcal{X}}
\def\sI{\mathcal{I}}
\def\sG{\mathcal{G}}
\def\sH{\mathcal{H}}
\def\fJ{\mathscr{J}}
\def\dir{\mathrm{Direct}}
\def\LinAMP{\mathrm{LinAMP}}
\def\wh{\mathrm{White}}
\def\pto{\xrightarrow{\mathbb{P}}}
\def\dto{\xrightarrow{\mathrm{(d)}}}
\renewcommand{\epsilon}{\varepsilon}
\renewcommand{\vec}{\mathrm{vec}}
\def\sym{\mathrm{sym}}
\def\what{\widehat}
\def\diag{\mathrm{diag}}
\def\Diag{\mathrm{diag}}
\def\Tr{\mathrm{Tr}}
\def\rank{\mathrm{rank}}
\def\cyc{\mathrm{cyc}}
\def\conn{\mathrm{conn}}
\def\Part{\mathrm{Part}}
\def\EvenPart{\mathrm{EvenPart}}
\def\Match{\mathrm{Match}}
\def\Unif{\mathrm{Unif}}
\def\Pois{\mathrm{Pois}}
\definecolor{cblack}{rgb}{0,0,0}
\definecolor{cblue}{rgb}{0.121569,0.466667,0.705882}    % 31,  119, 180
\definecolor{corange}{rgb}{1.000000,0.498039,0.054902}  % 256, 127, 14
\definecolor{cgreen}{rgb}{0.172549,0.627451,0.172549}   % 44,  160, 44
\definecolor{cred}{rgb}{0.839216,0.152941,0.156863}     % 214, 39,  40
\definecolor{cpurple}{rgb}{0.580392,0.403922,0.741176}  % 149, 103, 190
\definecolor{cbrown}{rgb}{0.549020,0.337255,0.294118}   % 141, 86,  75
\definecolor{cpink}{rgb}{0.890196,0.466667,0.760784}
\definecolor{cgray}{rgb}{0.498039,0.498039,0.498039}
\definecolor{cgreen2}{rgb}{0.7372549019607844, 0.7411764705882353, 0.13333333333333333}
\renewcommand{\emptyset}{\varnothing}
\newcommand\numberthis{\addtocounter{equation}{1}\tag{\theequation}}
\title{Computational and statistical lower bounds for low-rank estimation under general inhomogeneous noise}
\date{October 9, 2025}
\author{Debsurya De\thanks{Email: \texttt{dde4@jhu.edu}}\,\,}
\author{Dmitriy Kunisky\thanks{Email: \texttt{kunisky@jhu.edu}.}}
\affil{Department of Applied Mathematics \& Statistics, Johns Hopkins University}
\begin{document}

\maketitle

\begin{abstract}
    Recent work has generalized several results concerning the well-understood spiked Wigner matrix model of a low-rank signal matrix corrupted by additive i.i.d.\ Gaussian noise to the \emph{inhomogeneous} case, where the noise has a variance profile.
    In particular, for the special case where the variance profile has a block structure, a series of results identified an effective spectral algorithm for detecting and estimating the signal, identified the threshold signal strength required for that algorithm to succeed, and proved information-theoretic lower bounds that, for some special signal distributions, match the above threshold.
    We complement these results by studying the computational optimality of this spectral algorithm.
    Namely, we show that, for a much broader range of signal distributions, whenever the spectral algorithm cannot detect a low-rank signal, then neither can any low-degree polynomial algorithm.
    This gives the first evidence for a computational hardness conjecture of Guionnet, Ko, Krzakala, and Zdeborov\'{a}~(2023).
    With similar techniques, we also prove sharp information-theoretic lower bounds for a class of signal distributions not treated by prior work.
    Unlike all of the above results on inhomogeneous models, our results do not assume that the variance profile has a block structure, and suggest that the same spectral algorithm might remain optimal for quite general profiles.
    We include a numerical study of this claim for an example of a smoothly-varying rather than piecewise-constant profile.
    Our proofs involve analyzing the \emph{graph sums} of a matrix, which also appear in free and traffic probability, but we require new bounds on these quantities that are tighter than existing ones for non-negative matrices, which may be of independent interest.
\end{abstract}

\thispagestyle{empty}
\clearpage

\thispagestyle{empty}
\tableofcontents

\clearpage

\pagenumbering{arabic}
\section{Introduction}

\subsection{Spiked Matrix Models}
\label{sec:homog-spiked}

Spiked matrix models are one of the fundamental examples of high-dimensional statistics, demonstrating in a relatively simple setting the important phenomena of sharp phase transitions in the effectiveness of natural estimators and statistical-to-computational gaps.
The simplest version of spiked matrix model is the \emph{spiked Wigner matrix model}, where, for $x \sim \sP_N$ for some probability measure $\sP_N$ over $\R^N$ normalized such that $\|x\| \approx \sqrt{N}$, we observe
\[ Y = \frac{\beta}{\sqrt{N}}xx^{\top} + W, \]
where $\beta > 0$ is a scalar parameter and $W_{ij} = W_{ji} \sim \sN(0, 1)$ independently for all $1 \leq i \leq j \leq N$.\footnote{Different prior works make different choices about how to treat the diagonal of $W$; this choice is almost always inconsequential in such models for the kinds of questions we will discuss.}
We call $x$ the \emph{signal} of such a model.
We observe this signal only through $Y$, and consider one of two computational tasks:
\begin{enumerate}
    \item \textbf{Hypothesis testing} between $Y \sim \P_N$ the law of $Y$ above with $\beta = c > 0$ for some given constant $c$ (which we call the \emph{planted} or \emph{alternative} model) and $Y \sim \Q_N$ the law of $Y$ with $\beta = 0$ (which we call the \emph{null} model).
    \item \textbf{Estimating} $x$ from the observation $Y$.
\end{enumerate}

We will mostly focus on hypothesis testing here, and introduce the following notions of success for that task.
The first is a sensible general definition.
\begin{definition}[Strong detection]
    We say that a sequence of functions $t_N: \mathbb{R}^{N \times N}_{\sym} \to \{0, 1\}$, which in this context we call \emph{tests}, achieves \emph{strong detection} in the above setting if 
    \[ \P_N[t_N(Y) = 1] + \mathbb{Q}_N[t_N(Y) = 0] = o(1). \]
\end{definition}
\noindent
The following more quantitative version is the one we will work with in our main results.
\begin{definition}[Strong separation]
    We say that a sequence of functions $f_N: \mathbb{R}^{N \times N}_{\sym} \to \R$ achieves \emph{strong separation} in the above setting if
    \[ \Ex_{Y \sim \P_N} f_N(Y) - \Ex_{Y \sim \Q_N} f_N(Y) = \omega\left(\sqrt{\Varx_{Y \sim \Q_N} f_N(Y)} + \sqrt{\Varx_{Y \sim \P_N} f_N(Y)} \right). \]
\end{definition}
\noindent
If $f_N(Y)$ achieve strong separation, then it is easy to check that setting $t_N(Y) \colonequals \bm 1\{f_N(Y) > \tau\}$ for a suitable threshold $\tau$ achieves strong detection, in essentially the same runtime as it takes to compute $f_N$.
Conversely, if $t_N(Y)$ achieve strong detection then they themselves also achieve strong separation.
Thus, the two notions are equivalent over arbitrary $f_N$ and $t_N$; however, our main results and some of the prior work we discuss below concern strong separation by special classes of $f_N$, which does not in general have a direct correspondence with strong detection by a special class of $t_N$.

Since for large $\beta$ one expects the rank-one deformation above to create an outlier eigenvalue in $Y$, it is natural to try to perform these tasks using the top eigenpair of $Y$.
We write $\lambda_1(Y) \geq \cdots \geq \lambda_N(Y)$ for the sorted eigenvalues of a matrix, and $v_i(Y)$ for the unit eigenvector associated to eigenvalue $\lambda_i(Y)$.
The following important result of random matrix theory, a variant of the celebrated \emph{Baik--Ben Arous--P\'{e}ch\'{e} (BBP) transition}, characterizes how well $\lambda_1(Y)$ and $v_1(Y)$ perform at testing and estimation, respectively.
We introduce a different normalization of $Y$,
\[ \what{Y} \colonequals \frac{1}{\sqrt{N}}Y, \]
which is useful as standard random matrix theory results imply that $\|\what{Y}\| = O(1)$ with high probability \cite{FK-1981-EigenvaluesRandomMatrices}.
Below, $\pto$ denotes convergence in probability of random variables to constants.
\begin{theorem}[\cite{FP-2007-LargestEigenvalueWigner,CDMF-2009-DeformedWigner}]
    \label{thm:bbp}
    In the above setting, suppose that $\|x\| / \sqrt{N} \pto 1$.
    Then, the following hold.
    \begin{itemize}
        \item If $0 \leq \beta \leq 1$, then
        \begin{align*}
        \lambda_1(\what{Y}) &\pto 2, \\
        \left|\left\langle v_1(\what{Y}), \frac{x}{\sqrt{N}}\right\rangle\right| &\pto 0.
        \end{align*}
        \item If $\beta > 1$, then
        \begin{align*}
            \lambda_1(\what{Y}) &\pto \beta + \frac{1}{\beta} > 2, \\
            \left|\left\langle v_1(\what{Y}), \frac{x}{\sqrt{N}}\right\rangle\right| &\pto \sqrt{1 - \frac{1}{\beta^2}} > 0.
        \end{align*}
    \end{itemize}
\end{theorem}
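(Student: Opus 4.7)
The plan is to exploit the rank-one structure of the perturbation via the standard \emph{secular equation}, and then solve the resulting scalar equation in the limit using the Stieltjes transform of the semicircle law. I would first recall the baseline facts that $\lambda_1(\what{W}) \pto 2$, where $\what{W} \colonequals W/\sqrt{N}$, and that $\frac{1}{N} \Tr (z I - \what{W})^{-1} \pto g(z) \colonequals \frac{1}{2}(z - \sqrt{z^2 - 4})$ for any fixed $z > 2$ (the Stieltjes transform of the semicircle law on $[-2, 2]$).

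By the matrix determinant lemma, any eigenvalue $\lambda$ of $\what{Y}$ that is not an eigenvalue of $\what{W}$ must satisfy
\[ \frac{\beta}{N} x^\top (\lambda I - \what{W})^{-1} x \;=\; 1. \]
The key analytic step is to show that, since $x$ is independent of $W$ with $\|x\|/\sqrt{N} \pto 1$, the quadratic form on the left concentrates: for each fixed $\lambda > 2$,
\[ \frac{1}{N} x^\top (\lambda I - \what{W})^{-1} x \;\pto\; g(\lambda). \]
This follows from Hanson--Wright-type concentration for Gaussian quadratic forms conditional on $x$, combined with the trace convergence above. Since $g$ is strictly decreasing on $(2, \infty)$ from $g(2^+) = 1$ to $g(\infty) = 0$, the limit equation $\beta g(\lambda) = 1$ has a solution $\lambda > 2$ iff $\beta > 1$, and the unique solution is $\lambda_\star = \beta + 1/\beta$ (using the explicit identity $g(\beta + 1/\beta) = 1/\beta$). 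A monotonicity-in-$\lambda$ argument then upgrades this to $\lambda_1(\what{Y}) \pto \lambda_\star$ in the supercritical case; in the subcritical case $\beta \leq 1$, Weyl's inequality applied to the rank-one perturbation together with interlacing sandwiches $\lambda_1(\what{Y}) \pto 2$.

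For the eigenvector in the supercritical regime, I would use the identity $v_1(\what{Y}) = \frac{\beta \langle x, v_1(\what{Y})\rangle}{N} (\lambda_1(\what{Y}) I - \what{W})^{-1} x$, which follows from the eigenvalue equation, and take squared norms to obtain
\[ 1 \;=\; \left(\frac{\beta \langle x, v_1(\what{Y})\rangle}{N}\right)^2 \cdot x^\top (\lambda_1(\what{Y}) I - \what{W})^{-2} x. \]
Differentiating the quadratic form concentration in $\lambda$ gives $\frac{1}{N} x^\top (\lambda I - \what{W})^{-2} x \pto -g'(\lambda)$, and a direct computation yields $-g'(\lambda_\star) = 1/(\beta^2 - 1)$. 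Rearranging recovers the overlap squared $1 - 1/\beta^2$.

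The main obstacle I anticipate is the subcritical overlap statement $|\langle v_1(\what{Y}), x/\sqrt{N}\rangle| \pto 0$ when $\beta \leq 1$, since here the secular equation does not single out a canonical top eigenvector. I would handle this by studying the \emph{spectral measure of $\what{Y}$ at $x$}, namely $\mu^x_N \colonequals \sum_i \langle v_i(\what{Y}), x/\sqrt{N}\rangle^2 \delta_{\lambda_i(\what{Y})}$, whose Stieltjes transform is exactly the concentrated quadratic form analyzed above. Since this Stieltjes transform converges to $g(\lambda)$, which is bounded and analytic at $\lambda = 2$, no atom of $\mu^x_N$ can survive at the edge in the limit, and a quantitative contour integral / Portmanteau argument converts this into the desired $o(1)$ overlap bound.
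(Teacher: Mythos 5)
This theorem is quoted in the paper from \cite{FP-2007-LargestEigenvalueWigner,CDMF-2009-DeformedWigner} without proof, so there is no in-paper argument to compare against; your proposal is essentially the standard resolvent/secular-equation proof from that literature, and its overall architecture is sound. Three points in your sketch deserve tightening. First, the concentration $\frac{1}{N}x^{\top}(\lambda I - \what{W})^{-1}x \pto g(\lambda)$ cannot come from Hanson--Wright in $x$, since the theorem only assumes $\|x\|/\sqrt{N} \pto 1$ with no independence structure on the entries of $x$; the clean route is to condition on $x$ and use the orthogonal invariance of the Gaussian $W$ (reducing to $\frac{\|x\|^2}{N} e_1^{\top}(\lambda I - \what{W})^{-1}e_1$) together with Gaussian concentration of the Lipschitz map $W \mapsto x^{\top}(\lambda I - \what{W})^{-1}x$ for $\lambda$ bounded away from the bulk. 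Second, in the subcritical case Weyl plus interlacing only gives $2 + o(1) \leq \lambda_1(\what{Y}) \leq 2 + \beta + o(1)$; to conclude $\lambda_1(\what{Y}) \pto 2$ you still need the secular equation (with uniformity in $\lambda$ over $[2+\epsilon, C]$, supplied by monotonicity) to rule out solutions of $\beta\,\frac{1}{N}x^{\top}(\lambda I - \what{W})^{-1}x = 1$ above $2+\epsilon$ when $\beta \leq 1$. Third, the spectral measure $\mu^x_N$ you introduce for the subcritical overlap has Stieltjes transform $\frac{1}{N}x^{\top}(zI - \what{Y})^{-1}x$, not the quadratic form in $\what{W}$ you analyzed; a Sherman--Morrison step gives $\frac{1}{N}x^{\top}(zI-\what{Y})^{-1}x = m(z)/(1 - \beta m(z))$ with $m(z) = \frac{1}{N}x^{\top}(zI-\what{W})^{-1}x \approx g(z)$, and one must then check that this stays $o((z-2)^{-1})$ as $z \downarrow 2$ even at the critical point $\beta = 1$, where $1 - g(z) \asymp \sqrt{z-2}$, in order to exclude a surviving atom. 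With these repairs the argument goes through and matches the cited proofs.
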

\noindent
Thus the theorem implies that $\lambda_1(\what{Y})$ can be used to hypothesis test with high probability of success (indeed, it can be shown with a bit more argument that $\lambda_1(\what{Y})$ achieves strong separation in this setting) and $v_1(\what{Y})$ can be used to estimate $x$ non-trivially if and only if 
\[ \beta > \beta_* \colonequals 1. \]

It is natural to ask whether this performance is optimal or not: maybe a better statistical procedure can achieve the above results for smaller $\beta$.
The answer depends on $\sP_N$, the ``prior'' distribution on $x$.
This distribution (though not $x$ itself) is assumed to be known to the statistical algorithm under consideration.
So, for instance, if $\sP_N = \delta_{x_{\star}}$ is concentrated on a single value $x_{\star}$, then of course estimation is trivially simple since an algorithm can just output $x_{\star}$, and one can show that hypothesis testing is also easy by thresholding $x_{\star}^{\top}Yx_{\star}$.
However, the following result shows that, for ``sufficiently random'' $\sP_N$, in fact no procedure can improve on the simple spectral algorithm above.
\begin{theorem}[\cite{PWBM-2018-PCAI}]
    \label{thm:info-hom}
    Suppose that either $\sP_N = \Unif(\{x \in \R^N: \|x\| = \sqrt{N}\})$ or $\sP_N = \Unif(\{\pm 1\}^N)$.
    If $\beta < \beta_* = 1$, then there is no sequence of tests $t_N$ achieving strong detection and no sequence of functions $f_N$ achieving strong separation in the above model.
\end{theorem}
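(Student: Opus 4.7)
The plan is to apply Le Cam's second moment method to the likelihood ratio $L_N(Y) \colonequals d\P_N/d\Q_N(Y)$. Strong detection is impossible whenever the chi-squared divergence $\Ex_{\Q_N}[L_N^2]-1$ remains bounded, and moreover for any test function $f_N$,
\[
(\Ex_{\P_N} f_N - \Ex_{\Q_N} f_N)^2 = (\Ex_{\Q_N}[(L_N - 1)\, f_N])^2 \leq \Varx_{\Q_N}[f_N] \cdot (\Ex_{\Q_N}[L_N^2] - 1)
\]
by Cauchy--Schwarz, so a bounded second moment of $L_N$ also rules out strong separation. The theorem therefore reduces to showing $\Ex_{\Q_N}[L_N^2] = O(1)$ whenever $\beta < 1$.

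Next, I would compute the second moment in the standard way by introducing two independent copies $x, x' \sim \sP_N$ and integrating out the Gaussian noise. Expanding the two likelihood factors and using that under $\Q_N$ the entries of $Y$ are independent centered Gaussians yields, up to a benign constant from the diagonal,
\[
\Ex_{\Q_N}[L_N(Y)^2] = \Ex_{x, x'} \exp\!\left(\frac{\beta^2}{2N}\, \langle x, x'\rangle^2\right).
\]
Writing $U \colonequals \langle x, x'\rangle/\sqrt{N}$, the whole problem thus collapses to bounding the moment generating function $\Ex[\exp(\beta^2 U^2/2)]$ under each of the two priors.

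For the Rademacher prior, $\sqrt{N}\,U = \sum_i x_i x_i'$ is a sum of $N$ i.i.d.\ Rademacher variables, and the Hubbard--Stratonovich linearization $\exp(tS^2/N) = \Ex_{Z \sim \sN(0,1)} \exp(\sqrt{2t/N}\,ZS)$ together with $\cosh(u) \leq \exp(u^2/2)$ gives
\[
\Ex \exp\!\left(\frac{\beta^2}{2N} S^2\right) = \Ex_{Z \sim \sN(0,1)} \cosh\!\left(\beta Z/\sqrt{N}\right)^{N} \leq \Ex_{Z \sim \sN(0,1)} \exp(\beta^2 Z^2/2) = \frac{1}{\sqrt{1 - \beta^2}},
\]
which is finite precisely when $\beta < 1$. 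For the spherical prior, by rotational invariance $U$ equals in distribution the first coordinate of a uniform vector on the sphere of radius $\sqrt{N}$, so $U = \sqrt{N}\, g_1/\|g\|$ for $g \sim \sN(0, I_N)$; since $\|g\|^2/N \to 1$ in probability, $U^2$ is stochastically close to $g_1^2 \sim \chi^2_1$, and a short direct computation (or the analogous Gaussian representation) shows $\Ex[\exp(\beta^2 U^2/2)]$ is again bounded by roughly $(1-\beta^2)^{-1/2}$ uniformly in $N$.

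The main obstacle is more conceptual than technical: the second moment is only barely finite, degrading like $(1-\beta^2)^{-1/2}$ as $\beta \uparrow 1$, so the argument has no slack and genuinely requires the assumption $\beta < 1$. For more general priors, the vanilla second moment diverges because of rare but catastrophic large-overlap configurations, and one must resort to a \emph{conditional} second moment method, first truncating on an event like $\{|U| \leq \tau\}$ before computing the second moment. For the two priors considered here, however, the overlap $U$ concentrates sharply enough that no truncation is needed, and the direct calculation above matches the BBP threshold $\beta_* = 1$ of Theorem~\ref{thm:bbp} exactly.
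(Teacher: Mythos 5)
Your proposal is correct and follows exactly the second-moment (chi-squared) method used in the cited source \cite{PWBM-2018-PCAI} and formalized in this paper's own machinery (Lemma~\ref{lem:low-deg-prelim} together with Proposition~\ref{prop:chi-squared-testing}): reduce to $\Ex_{\Q_N}[L_N^2]=\Ex_{x,x'}\exp(\tfrac{\beta^2}{2N}\langle x,x'\rangle^2)$ up to the diagonal contribution, then bound the overlap MGF by $(1-\beta^2)^{-1/2}$ via Hubbard--Stratonovich for the Rademacher prior and the Beta/hypergeometric computation for the spherical prior. The only points needing a touch more care are centering $f_N$ before applying Cauchy--Schwarz (free, since $\Ex_{\Q_N}[L_N-1]=0$) and the fact that for the spherical prior the diagonal term $\tfrac{\beta^2}{2N}\sum_i x_i^2 x_i'^2$ is only bounded with high probability rather than almost surely, but both are routine.
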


This, too, can break down for certain $\sP_N$.
For instance, a long line of work has demonstrated that, if $\sP_N$ is supported on vectors with $p N$ non-zero entries for sufficiently small $p$ (the setting of \emph{sparse PCA}), then brute force searches requiring time $\exp(\Omega(N))$ over sparse vectors can succeed at hypothesis testing and estimation for certain values of $\beta < 1$, while conjecturing that this cannot be done by efficient algorithms \cite{LKZ-2015-LowRankChannelUniversality,LKZ-2015-PhaseTransitionsSparsePCA,KXZ-2016-MutualInformationMatrix,BDMKLZ-2016-MutualInformationReplica,BMVVX-2018-InfoTheoretic,AKJ-2018-SpikedWigner,LM-2019-LimitsLowRankEstimation}.
Corroborating these conjectures, the following result says that, for an even wider range of priors $\sP_N$, the threshold $\beta = 1$ is the best for a large class of algorithms, at least for hypothesis testing (see also \cite{MW-2023-PreciseErrorRatesPCA} for related but more intricate results on estimation).
Specifically, these results and the new ones we prove below consider the class of \emph{low-degree polynomial} separating statistics.
\begin{theorem}[\cite{kunisky2019notes}]
    \label{thm:low-deg-hom}
    Suppose that $\sP_N = \pi^{\otimes N}$ for some $\pi$ a bounded probability measure on $\R$ with mean 0 and variance 1.
    If $\beta < \beta_* = 1$, then there is no sequence of polynomials $f_N$ with $\deg(f_N) = o(N / \log N)$ that achieve strong separation in the above setting.
\end{theorem}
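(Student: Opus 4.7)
The plan is to reduce the theorem to bounding the $L^2(\Q_N)$ norm of the low-degree likelihood ratio (LDLR)
\[
L_N^{\leq D} \;\colonequals\; \mathrm{proj}_{\leq D}\!\left(\frac{d\P_N}{d\Q_N}\right),
\]
the orthogonal projection of the planted-to-null density onto polynomials of degree at most $D$. A standard Cauchy--Schwarz argument shows that for any polynomial $f_N$ of degree at most $D$,
\[
\bigl(\Ex_{\P_N} f_N - \Ex_{\Q_N} f_N\bigr)^2 \;\leq\; \bigl(\|L_N^{\leq D}\|_{L^2(\Q_N)}^2 - 1\bigr)\cdot \Var_{\Q_N}\! f_N,
\]
so $\|L_N^{\leq D}\|^2 = O(1)$ rules out strong separation by degree-$D$ polynomials, and it suffices to prove this norm bound.

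I would then compute the norm explicitly. Under $\Q_N$ the entries $\{Y_{ij}\}_{i\leq j}$ are independent Gaussians, so normalized Hermite polynomials in them form an orthonormal basis of $L^2(\Q_N)$. A direct Hermite expansion of $d\P_N/d\Q_N$, squaring with an independent replica $x' \sim \sP_N$, and collapsing the resulting multi-index sum via the multinomial theorem gives
\[
\|L_N^{\leq D}\|^2 \;=\; \Ex_{x,x'}\!\left[\sum_{k=0}^{D}\frac{1}{k!}\left(\frac{\beta^2}{2N}\langle x, x'\rangle^2 \;-\; \frac{\beta^2}{2N}\sum_{i=1}^N x_i^2 (x'_i)^2\right)^{\!k}\,\right].
\]
Because $\pi$ is bounded, the correction $\frac{1}{N}\sum_i x_i^2 (x'_i)^2$ is deterministically $O(1)$ and can be absorbed into a harmless multiplicative factor. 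Setting $Z \colonequals \langle x, x'\rangle/\sqrt{N}$, the remaining task is to show $\Ex[T_D(\exp(\beta^2 Z^2/2))] = O(1)$, where $T_D$ denotes the degree-$D$ Taylor truncation of the exponential.

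To bound this expectation I would split at a cutoff $s_N$ satisfying $\sqrt{D} \ll s_N \ll \sqrt{N}/\log N$. On the bulk event $\{|Z|\leq s_N\}$, the naive bound $T_D \leq \exp$ combines with the moderate-deviation estimate
\[
\P\bigl[|Z|\geq s\bigr] \;\leq\; \exp\!\left(-\tfrac{s^2}{2}(1 - o(1))\right) \qquad \text{for } s \leq s_N,
\]
obtained from a second-order Taylor expansion of the cumulant generating function of $x_i x'_i$: the variance-one normalization gives the leading coefficient $1/2$, while boundedness of $\pi$ controls the cubic and higher remainders throughout this window. Integrating yields a bulk contribution bounded by $(1-\beta^2)^{-1/2}(1+o(1))$, finite for any fixed $\beta < 1$. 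On the tail $\{|Z| > s_N\}$ I would use the Stirling-type estimate $T_D(z) \leq (D+1)(ez/D)^D$ valid for $z \geq D$, together with a Hoeffding tail for $Z$; balancing these bounds makes the tail contribution $o(1)$ precisely when $D = o(N/\log N)$.

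The main obstacle is obtaining the \emph{sharp} variance-one constant $1/2$ in the Gaussian-regime exponent. A crude sub-Gaussian bound for $\langle x, x'\rangle$ based only on the $\ell_\infty$ bound on $\pi$ would produce the weaker threshold $\beta_* = \|\pi\|_\infty^{-1}$, missing the BBP threshold from Theorem~\ref{thm:bbp}. Recovering the true constant requires exploiting the normalization $\mathrm{Var}(\pi) = 1$ via the cumulant expansion above, and matching the resulting moderate-deviation window to the polynomial-growth scale of the truncated exponential is exactly what forces the degree cutoff $D = o(N/\log N)$.
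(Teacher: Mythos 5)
Your proposal is correct and follows essentially the same route as the paper's machinery (and the cited reference): reduce to the low-degree $\chi^2$-divergence, write it as $\E \exp^{\leq D}(R)$ for the overlap $R \approx \frac{\beta^2}{2N}\langle x, x'\rangle^2$ with the diagonal correction absorbed as an $O(1)$ multiplicative factor, and then run a bulk/tail split in which a moderate-deviation bound with the sharp variance-one constant (obtained from the cumulant expansion of $x_i x_i'$, i.e., local subgaussianity) handles the bulk while the polynomial growth of $\exp^{\leq D}$ against a Hoeffding tail handles the rest --- this is precisely what Lemma~\ref{lem:overlap} packages, with your cutoff $s_N$ playing the role of $A(N) = \delta N$. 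One small repair: the window $\sqrt{D} \ll s_N \ll \sqrt{N}/\log N$ is mis-calibrated --- the bulk only needs $s_N = o(\sqrt{N})$, while the tail needs the stronger lower constraint $s_N^2 \gg D \log(N/D)$ rather than $s_N^2 \gg D$, and with your stated upper cap the window can be empty for $D$ near $N/\log N$; loosening the cap to $o(\sqrt{N})$ and strengthening the floor as indicated makes the window nonempty for every $D = o(N/\log N)$ and closes the argument.
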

\noindent
Since a polynomial of degree $D$ in the roughly $N^2$ variables of $Y$ requires time $N^{\Theta(D)}$ to evaluate naively, this suggests that, taking polynomials as a proxy for all functions computable under a given runtime budget, even when it is possible as in sparse PCA, it should take nearly exponential time in $N$ to compute a strongly separating statistic when $\beta < \beta_*$.
We will subscribe in this paper to the belief that this kind of heuristic reasoning about low-degree polynomial separating statistics is accurate; as detailed in the surveys \cite{kunisky2019notes,Wein-2025-LowDegreeSurvey}, it has given correct predictions of computational cost and hardness for numerous similar problems in prior work.

\subsection{Inhomogeneous Spiked Matrix Models}

The spiked Wigner matrix model is unsatisfying in various ways.
One might object that one assumes that the noise is Gaussian, i.i.d., and applied additively to the signal, all of which might be inaccurate assumptions for modeling various statistical scenarios.
Recent work has begun to generalize this model to allow for various other noise distributions; in addition to the references we discuss in detail below, see \cite{BGN-2011-PerturbationsRandomMatrices,CDMFF-2011-DeformedWignerFreeConvolution,CDM-2016-DeformedRandomMatricesFreeProbability,barbier2022price,BCMS-2023-FundamentalLimitsStructuredPCA,zhang2024matrix,barbier2025information} for some other generalizations.

One of these directions has been to consider a noise matrix $W$ whose entries are still independent and Gaussian, but where the variances are \emph{inhomogeneous}, being allowed to vary from entry to entry.
We summarize this in the following model, which will be the subject of this paper.
We also relax the precise structural assumption on the rank-one signal from the above discussion, instead allowing for now the signal to be an arbitrary matrix (later we will consider it being both rank-one and more generally low-rank).
\begin{definition}[Inhomogeneous spiked Wigner matrix model]
    For each $N \geq 1$, let $\sX_N$ be a probability measure on $\R^{N \times N}_{\sym}$ and let $\Delta = \Delta^{(N)} \in \left(\R_{> 0} \cup \{+\infty\}\right)^{N \times N}_{\sym}$.
    Define
    \[ \fJ = \fJ^{(N)} \colonequals \{(i, j): 1 \leq i \leq j \leq N, \Delta_{ij} \neq +\infty\}. \]
    Associated to these parameters, define two probability measures $\Q_N$ and $\P_N$ on $\mathbb{R}^{\fJ}$:
    \begin{enumerate}
    \item To draw $Y \sim \mathbb{Q}_N$, draw $Y_{ij} = Y_{ji} \sim \mathcal{N}(0, \Delta_{ij})$ independently for each $(i, j) \in \fJ$.
    \item To draw $Y \sim \mathbb{P}_N$, first draw $X \sim \mathcal{X}_N$, and then draw $Y_{ij} = Y_{ji} \sim \sN(X_{ij}, \Delta_{ij})$ independently for each $(i, j) \in \fJ$.
    \end{enumerate}
    We call $\Delta$ the \emph{variance profile} of the model.
    We call the model \emph{finite-variance} if $\Delta_{ij} \neq +\infty$ for all $i, j \in [N]$, in which case it may be viewed as a pair of probability measures on $\R^{N \times N}_{\sym}$
    \label{def:inhom-wigner}
\end{definition}
\noindent
We allow for some entries to be observed with ``infinite variance'' of noise, by which in practice we just mean that those entries are not observed at all and we only observe a subset $\fJ$ of entries of a symmetric matrix.
Some other work has interpreted such models as having ``censorship'' of some entries (e.g., \cite{ABBS-2014-ExactRecoveryCensoredSBM,SLKZ-2015-SpectralDetectionCensoredSBM}), but we will see that, formally, it is consistent and convenient for us to view these entries as indeed having noise of variance equal to $+\infty$.

Note that if we take $\Delta_{ij} = 1$ for all $i, j \in [N]$ and $X \sim \sX_N$ drawn as $X = \beta xx^{\top}$ for $x \sim \sP_N$, then we recover the homogeneous model discussed above in Section~\ref{sec:homog-spiked}.
$\Q_N$ will serve as a null model in this general setting; in the rank-one case above, it is what we would obtain if we took $\beta = 0$.
Such models seem to have first appeared in the concurrent series of works \cite{BR-2020-InfoTheoryMultiviewSpikedMatrix,BR-2022-InfoTheoryInhomogeneousSpikedMatrix} and \cite{ACCM-2021-DeepBoltzmannNishimori,ACCM-2021-MultiSpeciesMeanFieldSpinGlass,ACCM-2022-PhysicsMultiChannelSpikedWigner}, the former motivated by statistics and the latter by statistical physics.
Further important results on this model that we will discuss below have been obtained in the recent works \cite{pak2024optimal,MKK-2024-OptimalPCABlockSpikedMatrix,BCSVH-2024-MatrixConcentrationFreeProbability2,GKKZ-2025-InhomogeneousSpikedWigner}.

Previous work on such statistical models appears to have nearly exclusively focused on the case of finite variances having the following special \emph{block structure} of the variance profile (the only exception we are aware of is \cite{bigot2021freeness}, which as we will see studies a spectral algorithm that is likely suboptimal).
In contrast, all of our results allow for infinite variances, and some of our results also do not require a block-structured variance profile.
General variance profiles are actually quite frequently considered in random matrix theory outside of statistical applications; see Section~\ref{sec:related} for references to such results.

\begin{definition}[Block-structured matrices]
\label{BCM}
    Suppose that $A^{(N)} \in \left(\R_{> 0} \cup \{+\infty\}\right)^{N \times N}_{\sym}$ for each $N 
    \geq 1$.
    Suppose that there exist $n \geq 1$, $\rho_1, \dots, \rho_n \in (0, 1)$ with $\rho_1 + \cdots + \rho_n = 1$, and a matrix $\bar{A} \in \left(\R_{> 0} \cup \{+\infty\}\right)^{n \times n}_{\sym}$, such that the following hold: for each $N$, there is a function $q: [N] \to [n]$ so that $A^{(N)}_{ij} = \bar{A}_{q(i)q(j)}$ for all $i, j \in [N]$, and for each $i \in [n]$, $|q^{-1}(i)| / N \to \rho_i$ as $N \to \infty$.
    Then, we say that the sequence $A^{(N)}$ is \emph{block-structured} with parameters $(\bar{A}, \rho_1, \dots, \rho_n)$.
    
    When $\Delta^{(N)}$ is block-structured for an inhomogeneous spiked Wigner matrix model, we also call the model itself block-structured.
    We always use a bar to denote the matrix parameter associated to a block-structured sequence.
    For instance, we will talk about matrices $\Delta^{(N)}$ and $\Phi^{(N)}$ (the latter to be defined below) being block-structured, and $\bar{\Delta}$ and $\bar{\Phi}$ will be the associated parameters.
    \label{def:block-structured}
\end{definition}
\noindent
It seems that the reason for the focus on block-structured models in prior work is that, as the definition makes clear, they have a well-defined entrywise limiting behavior.
Our results will show, however, that such a strong notion of limit for the variance profile sequence need not hold to draw useful conclusions about the tractability of the associated spiked models.

In either the general inhomogeneous spiked Wigner matrix model or its block-structured special case, it is natural to ask all of the questions we have mentioned above in Section~\ref{sec:homog-spiked}: what strength of signal is required for testing or estimation to be tractable, either by arbitrary functions or computationally efficient ones?
Perhaps one expects that, if the signal $X \sim \sX_N$ is low-rank and positive semidefinite (like $X = \frac{\beta}{\sqrt{N}} xx^{\top}$ from our earlier discussion), then it should be a good idea to simply use $\lambda_1(\what{Y})$ and $v_1(\what{Y})$ again.
Or, perhaps one should ``whiten'' the noise and divide each entry $Y_{ij}$ by $\sqrt{\Delta_{ij}}$ before computing the spectral decomposition.
Although either approach succeeds for sufficiently large $\beta$, it turns out that neither is optimal in the inhomogeneous case (see discussion in \cite{GKKZ-2025-InhomogeneousSpikedWigner,MKK-2024-OptimalPCABlockSpikedMatrix} as well as our Section~\ref{sec:numerical}).

Motivated by the analysis of approximate message passing, \cite{pak2024optimal} proposed another spectral algorithm, as follows.
We introduce the entrywise reciprocal of $\Delta$, which will be useful throughout our discussion:
\[ \Phi = \Phi^{(N)} \colonequals \Delta^{\odot -1}, \,\,\,\, \text{i.e., } \,\,\,\, \Phi_{ij} = \left\{\begin{array}{ll} 1 / \Delta_{ij} & \text{if } \Delta_{ij} \neq +\infty, \\ 0 & \text{if } \Delta_{ij} = +\infty\end{array}\right\}. \]
Here and throughout, $\odot$ denotes the Hadamard entrywise product or power of matrices.
Note that we work here and always under the convention that
\[ \frac{1}{+\infty} = 0, \]
and with this convention $\Phi^{(N)} \in (\R_{\geq 0})^{N \times N}_{\sym}$, never having infinite entries even if $\Delta^{(N)}$ does.

In our normalization, where $\|x\| / \sqrt{N} \pto 1$ for $x \sim \sP_N$ and $X \sim \sX_N$ is $X = \frac{\beta}{\sqrt{N}} xx^{\top}$ for such $x$ and some $\beta > 0$, the algorithm proposed by \cite{pak2024optimal} uses the matrix
\begin{equation}
\label{eq:tildeY}
\sH(Y) \colonequals \frac{\beta}{\sqrt{N}} \Phi \odot Y - \beta^2\, \diag\left(\frac{1}{N}\Phi 1_N\right), 
\end{equation}
where $1_N$ denotes the all-ones vector.
The algorithm is quite unusual on its face, as the first term, in our above terminology, seems to ``over-whiten'' the observation $Y$.
We remark that, unlike both alternatives discussed above, to implement this algorithm requires knowledge of the parameter $\beta$ (this point is perhaps less clear in the notation used by the previous works \cite{pak2024optimal,MKK-2024-OptimalPCABlockSpikedMatrix,BCSVH-2024-MatrixConcentrationFreeProbability2} studying this algorithm, who do not introduce $\beta$ as a separate parameter, though \cite{pak2024optimal} make the same point in their Remark 3.1).
The analysis of a spectral algorithm using this matrix, as conjectured by \cite{pak2024optimal} and proved concurrently in the two references below, is as follows.
\begin{theorem}[\cite{MKK-2024-OptimalPCABlockSpikedMatrix,BCSVH-2024-MatrixConcentrationFreeProbability2}]
    \label{thm:bbp-inhom}
    Consider a block-structured finite-variance inhomogeneous spiked Wigner matrix model and suppose that the block structure of $\Delta$ has parameters $(\bar{\Delta}, \rho_1, \dots, \rho_n)$.
    Suppose that $x \sim \sP_N$ has i.i.d.\ entries of mean zero and variance 1, and that $X \sim \sX_N$ is $X = \frac{\beta}{\sqrt{N}} xx^{\top}$ for $x$ as above and some $\beta > 0$.
    Write $\bar{\Phi} \colonequals \bar{\Delta}^{\odot -1}$, and define
    \begin{equation}
    \beta_* = \beta_*(\bar{\Delta}, \rho_1, \dots, \rho_n) \colonequals \sqrt{\frac{1}{\lambda_1(\Diag(\rho)^{1/2}\, \bar{\Phi}\, \Diag(\rho)^{1/2})}}. \label{eq:beta-crit-block}
    \end{equation}
    Then, the following hold:
    \begin{itemize}
        \item If $\beta \leq \beta_*$, then
        \begin{align*}
        \lambda_1(\sH(Y)) - \lambda_2(\sH(Y)) &\pto 0, \\
        \left|\left\langle v_1(\sH(Y)), \frac{x}{\sqrt{N}}\right\rangle\right| &\pto 0.
        \end{align*}
        \item If $\beta > \beta_*$, then there exist $\delta = \delta(\beta) > 0$ and $\epsilon = \epsilon(\beta) > 0$ such that
        \begin{align*}
        \lambda_1(\sH(Y)) - \lambda_2(\sH(Y)) &\pto \delta > 0, \\
        \left|\left\langle v_1(\sH(Y)), \frac{x}{\sqrt{N}}\right\rangle\right| &\pto \epsilon > 0.
        \end{align*}
    \end{itemize}
\end{theorem}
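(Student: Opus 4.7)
The plan is to reduce the claim to a rank-$n$ BBP analysis for a Wigner-type matrix with variance profile. Under the planted model, substituting $Y = \frac{\beta}{\sqrt{N}}xx^\top + W$ gives
\[ \sH(Y) = H_0 + \beta^2 S,\quad H_0 \colonequals \frac{\beta}{\sqrt{N}}\Phi\odot W - \beta^2\Diag\!\left(\tfrac{1}{N}\Phi 1_N\right),\quad S \colonequals \frac{1}{N}\Diag(x)\Phi\Diag(x), \]
where $H_0$ has the same law as $\sH(Y)$ under $\Q_N$. The essential leverage of block structure is that $\Phi = \sum_{k,l}\bar{\Phi}_{kl}\mathbbm{1}_{q^{-1}(k)}\mathbbm{1}_{q^{-1}(l)}^\top$, so defining $x^{(k)}_i \colonequals x_i\mathbbm{1}\{q(i)=k\}$ and $V \colonequals [x^{(1)}\,|\,\cdots\,|\,x^{(n)}] \in \R^{N\times n}$ yields $\beta^2 S = \frac{\beta^2}{N}V\bar{\Phi}V^\top$, a perturbation of $H_0$ of rank at most $n$ (a constant independent of $N$).

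I would then carry out the BBP analysis via the resolvent. By the Woodbury identity, for $z \notin \mathrm{spec}(H_0)$, $z$ is an eigenvalue of $\sH(Y)$ iff
\[ \det\!\left(I_n - \tfrac{\beta^2}{N}\bar{\Phi}\, V^\top G_0(z)V\right) = 0,\qquad G_0(z) \colonequals (zI - H_0)^{-1}. \]
Since $H_0$ is a Gaussian Wigner-type matrix with variance profile $\beta^2\Phi/N$ plus a deterministic diagonal, the theory of the quadratic vector (Dyson) equation for such matrices supplies a limiting spectral measure $\mu$ with right edge $z_+$, convergence $\|H_0\|\pto z_+$, and a local law giving $(G_0(z))_{ii}\to g_{q(i)}(z)$ with $(g_1,\dots,g_n)$ the unique admissible solution of the block Dyson equation
\[ g_k(z) = \frac{-1}{z + \beta^2\bar{D}_k - \beta^2\sum_l \bar{\Phi}_{kl}\rho_l\, g_l(z)},\qquad \bar{D}_k \colonequals \sum_l \bar{\Phi}_{kl}\rho_l. \]
Because $x$ is independent of $W$, a Hanson--Wright-type bilinear concentration argument gives $\frac{1}{N}V^\top G_0(z)V \to \Diag(\rho_1 g_1(z),\dots,\rho_n g_n(z))$ in probability, uniformly on compacts above the bulk. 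The limiting outlier criterion is therefore that $1/\beta^2$ is an eigenvalue of $\bar{\Phi}\Diag(\rho)\Diag(g(z))$.

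A monotonicity argument shows that the top eigenvalue of $\bar{\Phi}\Diag(\rho)\Diag(g(z))$ increases from $0$ to an edge value as $z$ decreases from $+\infty$ to $z_+$. The key identity driving the claimed threshold is
\[ \lambda_1\!\bigl(\bar{\Phi}\Diag(\rho)\Diag(g(z_+))\bigr) \;=\; \lambda_1\!\bigl(\Diag(\rho)^{1/2}\bar{\Phi}\Diag(\rho)^{1/2}\bigr) \;=\; 1/\beta_*^2, \]
which I would verify by direct analysis of the block Dyson equation at the edge or by cross-referencing the null-spectrum computation of \cite{MKK-2024-OptimalPCABlockSpikedMatrix,BCSVH-2024-MatrixConcentrationFreeProbability2}; the specific diagonal correction $-\beta^2\Diag(\Phi 1_N/N)$ introduced by \cite{pak2024optimal} is designed precisely to make this identity take its clean form. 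Given it, an outlier $z^* > z_+$ exists iff $\beta > \beta_*$, giving $\lambda_1(\sH(Y)) - \lambda_2(\sH(Y))\pto z^* - z_+ > 0$; in the subcritical regime no such $z^*$ exists, and edge rigidity for $H_0$ combined with rank-$n$ Weyl interlacing forces the top two eigenvalues of $\sH(Y)$ to both lie within $o(1)$ of $z_+$. The Woodbury formula also identifies the supercritical top eigenvector as $v_1 \approx Vc/\|Vc\|$ with $c$ an eigenvector of $\bar{\Phi}\Diag(\rho)\Diag(g(z^*))$ for eigenvalue $1/\beta^2$; pairing with $x/\sqrt{N}$ and using $\langle x^{(k)}, x\rangle/N\pto\rho_k$ gives a nonzero overlap, while in the subcritical regime delocalization of edge eigenvectors of $H_0$ yields a vanishing overlap.

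The main obstacle is importing the isotropic local law for $G_0$ strongly enough to cover the spectral region up to $z_+$, together with stability analysis of the block Dyson equation near the edge, in a form that supports the required bilinear concentration of $\frac{1}{N}V^\top G_0(z)V$; these tools are available in the variance-profile random matrix literature (Ajanki--Erd\H{o}s--Kr\"uger and descendants), but must be packaged to interact cleanly with the rank-$n$ Schur complement above.
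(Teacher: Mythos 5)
First, a point of order: the paper does not prove Theorem~\ref{thm:bbp-inhom} at all --- it is imported verbatim from \cite{MKK-2024-OptimalPCABlockSpikedMatrix,BCSVH-2024-MatrixConcentrationFreeProbability2} --- so there is no in-paper argument to compare yours against, and I can only assess the sketch on its own terms. Your structural reduction is the right one: under $\P_N$ one has $\sH(Y) = H_0 + \frac{\beta^2}{N}V\bar{\Phi}V^{\top}$ with $H_0$ distributed as the null matrix and $V$ the matrix whose columns are $x$ restricted to the blocks, so the signal enters as a rank-$\leq n$ perturbation, and the Woodbury determinant criterion combined with the local law and Dyson equation for Wigner-type matrices with a variance profile is the standard and appropriate machinery for the resulting BBP analysis.

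The genuine gap is that the step you call the ``key identity'' is false as stated, and it is precisely the step that produces the threshold. Test it in the homogeneous case $n = 1$, $\bar{\Phi} = (1)$, $\rho_1 = 1$: then $H_0 = \frac{\beta}{\sqrt{N}}W - \beta^2 I_N$, whose bulk is a semicircle with right edge $z_+ = 2\beta - \beta^2$, and with your convention $G_0(z) = (zI - H_0)^{-1}$ one gets $g(z_+) = 1/\beta$. Hence $\lambda_1\bigl(\bar{\Phi}\Diag(\rho)\Diag(g(z_+))\bigr) = 1/\beta$, while $\lambda_1\bigl(\Diag(\rho)^{1/2}\bar{\Phi}\Diag(\rho)^{1/2}\bigr) = 1 = 1/\beta_*^2$; the two sides agree only at $\beta = 1$. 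The issue is that $g$ depends on $\beta$ (the noise part of $H_0$ has variance profile $\beta^2\Phi/N$), so no $\beta$-independent identity of this form can hold. The criterion your Woodbury computation actually yields is $\beta^2\,\lambda_1\bigl(\bar{\Phi}\Diag(\rho)\Diag(g(z_+;\beta))\bigr) > 1$ --- which in the homogeneous case reads $\beta > 1$, the correct answer --- and the real content of the theorem is to show that this $\beta$-dependent quantity crosses $1$ exactly at $\beta = \beta_*$, which requires an actual solution of the block Dyson equation at the edge (this is where the correction $-\beta^2\Diag(\frac{1}{N}\Phi 1_N)$ earns its keep). As written, your sketch substitutes a false identity for that computation, so the threshold is asserted rather than derived. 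A smaller inaccuracy: the Schur complement identifies the outlier eigenvector as proportional to $G_0(z^*)Vc$, not $Vc$; the overlap with $x/\sqrt{N}$ is then computed from the bilinear forms $\frac{1}{N}x^{\top}G_0(z^*)x^{(k)} \to \rho_k g_k(z^*)$, which still gives a nonzero limit, so this is fixable, but the formula you wrote is not the one the argument produces.
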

\noindent
We note that, while the result above is restricted to finite variances, the quantity $\beta_*$ defined in~\eqref{eq:beta-crit-block} is still well-defined even if some variances are infinite, which we will use below.
The nature of and reason for this threshold $\beta_*$ should still be mysterious; we will clarify how this value arises when we discuss our main results and analysis below.

For now, two basic observations will suffice.
First, note that the values in $\bar{\Phi}$ are the reciprocals of the noise variances, so they get smaller as the amount of noise increases, and thus $\beta_*$ gets larger as the amount of noise increases, meaning that the problem becomes harder (at least for this spectral algorithm), as we expect.
Second, note that the result is very similar to Theorem~\ref{thm:bbp}.
The reason for the slightly different form of the result on $\lambda_1(\sH(Y))$ compared to Theorem~\ref{thm:bbp} on $\lambda_1(\what{Y})$ is that, unlike for $\what{Y}$, the shape of the ``bulk'' eigenvalue distribution of $\sH(Y)$ depends on $\beta$ by construction.
So, our condition on the top eigenvalue captures not whether $\lambda_1(\sH(Y))$ is unusually large compared to the case $\beta = 0$, but whether it is an outlier from this bulk distribution.
This can still be used to achieve strong detection when $\beta > \beta_*$ by setting an appropriate threshold for $\lambda_1(\sH(Y))$ depending on $\beta$.
Actually, Theorem~\ref{thm:bbp-inhom} generalizes Theorem~\ref{thm:bbp}, since for $\Delta$ the all-ones matrix we recover the threshold $\beta_* = 1$, and while we have not stated them, the full results of \cite{MKK-2024-OptimalPCABlockSpikedMatrix} give explicit though complicated descriptions of $\delta$ and $\epsilon$ appearing above, as well as the high-probability limits of $\lambda_i(\sH(Y))$ for $i \in \{1, 2\}$, making the result equally precise to Theorem~\ref{thm:bbp}.
Thus, just as in that case, Theorem~\ref{thm:bbp-inhom} should be read as saying that spectral algorithms using $\sH(Y)$ succeed if and only if $\beta > \beta_*$.

\subsection{Main Results}
\label{sec:results}

Our goal will be to probe whether the above algorithm---which, without some study of its derivation via approximate message passing, appears quite unnatural---is optimal.
The threshold $\beta_*$ in the block-structured case is also a natural one for the performance of approximate message passing algorithms, and on this basis \cite{GKKZ-2025-InhomogeneousSpikedWigner} conjectured that efficient algorithms should not be able to hypothesis test or estimate effectively when $\beta < \beta_*$.

As in Theorem~\ref{thm:info-hom} on homogeneous spiked Wigner models cited above, only in some cases will we be able to show that this algorithm is \emph{statistically} or \emph{information-theoretically} optimal, meaning that no procedure regardless of runtime can achieve strong detection when $\beta < \beta_*$.
These results of ours will generalize Theorem~\ref{thm:info-hom} to the inhomogeneous setting, complementing and extending some information-theoretic lower bounds of \cite{GKKZ-2025-InhomogeneousSpikedWigner}, which apply only to the block-structured case and then only in one very special case mentioned below match the threshold $\beta_*$.

Paralleling Theorem~\ref{thm:low-deg-hom} for the homogeneous case, we will also give evidence by analyzing low-degree polynomials that, for a much wider range of prior distributions $\sP_N$, this algorithm is \emph{computationally} optimal, suggesting that no polynomial-time procedure can hypothesis test when $\beta < \beta_*$ and giving evidence for the conjecture of \cite{GKKZ-2025-InhomogeneousSpikedWigner}.
Notably, going beyond that conjecture, our results will treat very general variance profiles which need not be block-structured, and will suggest that in fact a threshold generalizing the above $\beta_*$ might be the correct computational threshold for a broader class of inhomogeneous spiked Wigner matrix models.

A particularly important role will turn out to be played by the largest eigenvalue of $\Phi$.
Since the entries of $\Phi$ are non-negative, this eigenvalue is strictly positive unless we are in the trivial situation where $\Phi = 0$, which only arises when $\fJ = \emptyset$ and we make no observation at all.
Recall that we work in a scaling where the entries of $\Phi$ have magnitude $\Phi_{ij} = \Theta(1)$, whereby we expect the largest eigenvalue to be of size $\Theta(N)$.
We have the following description of the threshold $\beta_*$ in terms of this eigenvalue, which we prove in Section~\ref{sec:linalg}.
\begin{proposition}
    \label{prop:mu1-block}
    In a block-structured inhomogeneous spiked Wigner model,
    \[ \widehat{\mu}_1 \colonequals \lim_{N \to \infty} \frac{\lambda_1(\Phi^{(N)})}{N} = \lambda_1(\Diag(\rho)^{1/2}\, \bar{\Phi}\, \Diag(\rho)^{1/2}). \]
    This is related to $\beta_*$ defined for such models in \eqref{eq:beta-crit-block} by
    \[ \beta_* = \frac{1}{\sqrt{\what{\mu}_1}}. \]
\end{proposition}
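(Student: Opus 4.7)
The plan is to expose $\Phi^{(N)}$ as a low-rank product built from the block assignment map $q$, which reduces the spectral problem for an $N\times N$ matrix to one for a fixed $n\times n$ matrix whose parameters depend on $N$ only through the block frequencies $N_k/N$.

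Concretely, let $N_k \colonequals |q^{-1}(k)|$ for $k \in [n]$, and let $B = B^{(N)} \in \R^{N \times n}$ be the indicator matrix with $B_{ik} = \bm{1}\{q(i) = k\}$. Then by the definition of block structure we have the factorization $\Phi^{(N)} = B \bar{\Phi} B^{\top}$. Moreover $B^{\top} B = \Diag(N_1, \dots, N_n)$. Since the nonzero eigenvalues of $UV$ and $VU$ coincide for any rectangular matrices $U, V$ of compatible shapes, I first conclude that the nonzero eigenvalues of $\Phi^{(N)} = B\bar\Phi B^\top$ are precisely those of the $n \times n$ matrix $\bar\Phi B^\top B = \bar\Phi \Diag(N_1, \dots, N_n)$. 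This latter matrix is not symmetric, but it is similar (via conjugation by $\Diag(N_1, \dots, N_n)^{1/2}$) to the symmetric matrix $\Diag(N_1, \dots, N_n)^{1/2}\, \bar\Phi\, \Diag(N_1, \dots, N_n)^{1/2}$, whose top eigenvalue is therefore equal to $\lambda_1(\Phi^{(N)})$. Since $\bar\Phi$ has non-negative entries this eigenvalue is non-negative, so we do not lose it by restricting to nonzero eigenvalues.

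Dividing by $N$ pulls the scaling inside the diagonal factors, giving
\[
\frac{\lambda_1(\Phi^{(N)})}{N} = \lambda_1\!\left(\Diag\!\left(\frac{N_1}{N}, \dots, \frac{N_n}{N}\right)^{1/2} \bar{\Phi}\, \Diag\!\left(\frac{N_1}{N}, \dots, \frac{N_n}{N}\right)^{1/2}\right).
\]
By the continuity of the top eigenvalue as a function of the entries of a fixed-size symmetric matrix, the hypothesis $N_k/N \to \rho_k$ then yields the desired limit $\widehat\mu_1 = \lambda_1(\Diag(\rho)^{1/2}\, \bar\Phi\, \Diag(\rho)^{1/2})$. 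The second identity $\beta_* = 1/\sqrt{\widehat\mu_1}$ is then immediate upon comparing this expression with the definition \eqref{eq:beta-crit-block}.

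There is essentially no obstacle here: the argument is pure linear algebra, and the only step that requires any care is noting that the infinite entries of $\Delta$ pose no difficulty, because we work with $\Phi = \Delta^{\odot -1}$ which is always a bona fide matrix with non-negative real entries, so the spectral manipulations above apply verbatim.
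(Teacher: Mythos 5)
Your proof is correct and follows essentially the same route as the paper's: the paper proves the general Proposition~\ref{prop:block-structured-spectrum} by writing $A^{(N)} = R\bar{A}R^{\top}$ for the block indicator matrix $R$, passing to the $n \times n$ matrix $G^{1/2}\bar{A}G^{1/2}$ with $G = \Diag(|S_1|,\dots,|S_n|)$ via the equality of nonzero spectra of $XY$ and $YX$, and concluding by entrywise convergence and continuity of $\lambda_1$ in fixed dimension, exactly as you do. Your extra remark that Perron--Frobenius non-negativity of the top eigenvalue justifies restricting to nonzero eigenvalues is a small but welcome point of care.
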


We now give our main result on block-structured models.
In this case, we give a computational lower bound under mild conditions on the prior distribution $\sX$, a statistical lower bound under stronger conditions, and allow for signals of any constant rank $\kappa$.
Here and below, we apply the definitions of strong detection and strong separation to the probability measures $\P_N$ and $\Q_N$ described in  Definition~\ref{def:inhom-wigner}.
\begin{theorem}[Block-structured variance profile]
\label{theorem:comp-lower-bound-block-constant-variance}
Consider the inhomogeneous spiked Wigner matrix model (Definition~\ref{def:inhom-wigner}) with block-structured (but possibly infinite) variances.
Suppose that $\sP_N$ is a probability measure over $\R^{N \times \kappa}$ such that $V \sim \sP_N$ has i.i.d.\ rows drawn from some probability measure $\nu$ on $\R^{\kappa}$.
Let $X \sim \sX_N$ be $X = \frac{\beta}{\sqrt{N}} VV^{\top}$ for $V \sim \sP_N$ and some $\beta > 0$.
Then, the following hold, for $\beta_*$ as defined in \eqref{eq:beta-crit-block}:
\begin{enumerate}
    \item \textbf{Computational lower bound:} Suppose that $\nu$ satisfies the conditions
    \begin{enumerate}
        \item $\E_{x \sim \nu}[x] = 0$.
        \item $\|\Cov_{x \sim \nu}[x]\| = \|\Ex_{x \sim \nu} xx^{\top}\| \leq 1$.
        \item $\nu$ has bounded support.
    \end{enumerate}
    If $\beta < \beta_*$, then there is no sequence of polynomials $f_N \in \R[Y]$ with $\deg(f_N) = o(N / \log N)$ that achieve strong separation.
    \item \textbf{Statistical lower bound:} Suppose that the above conditions on $\nu$ are satisfied, as well as that
    \begin{enumerate}[resume]
        \item When $x, x^{\prime} \sim \nu$ are i.i.d., then $x \otimes x^{\prime}$ is 1-moment-subgaussian (see Definition~\ref{def:moment-subgaussian}).
    \end{enumerate}
    If $\beta < \beta_*$, then there is no sequence of functions $f_N$ that achieve strong separation, there is no sequence of tests $t_N$ that achieve strong detection, and $\chi^2(\P_N \mid \Q_N) = O(1)$.\footnote{As we have mentioned, the first two statements here are equivalent, and as we state in Proposition~\ref{prop:chi-squared-testing}, both are implied by the third.}
\end{enumerate}
\end{theorem}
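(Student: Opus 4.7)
The plan is to reduce both parts to bounding an $L^2$ norm of the likelihood ratio $L_N = d\P_N/d\Q_N$. Part 1 follows from the standard low-degree criterion: if $\|L_N^{\leq D}\|^2 = O(1)$, then no polynomial of degree $\le D$ achieves strong separation. Part 2 follows from the standard $\chi^2$ bound: if $\chi^2(\P_N\|\Q_N) = \|L_N\|^2 - 1 = O(1)$, then by Cauchy--Schwarz every $\Q_N$-negligible event is also $\P_N$-negligible, ruling out strong detection and hence strong separation. For the Gaussian channel $Y_{ij}\sim\sN(X_{ij},\Delta_{ij})$ over the observed pairs $\fJ$, a Hermite polynomial expansion yields
\[ \|L_N^{\leq D}\|^2 \;=\; \sum_{\alpha:\,|\alpha|\le D}\frac{1}{\alpha!}\,\Ex_{X,X'\sim\sX_N}\Big[\prod_{(i,j)\in\fJ}\Phi_{ij}^{\alpha_{ij}} X_{ij}^{\alpha_{ij}}(X'_{ij})^{\alpha_{ij}}\Big], \]
over symmetric nonnegative multi-indices $\alpha = (\alpha_{ij})$, and the analogous identity without the cutoff for $\|L_N\|^2$. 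Substituting $X = (\beta/\sqrt{N})VV^\top$ with rows $v_i \sim \nu$ i.i.d.\ replaces each $X_{ij}$ by $(\beta/\sqrt N)\langle v_i,v_j\rangle$, so the task becomes controlling combinatorial moments of this quadratic form.

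Next, I would view each multi-index $\alpha$ as a multigraph $H$ on $[N]$ with edge multiplicities $\alpha_{ij}$. Expanding each $\langle v_i,v_j\rangle$ over its $\kappa$ coordinates and using that the rows of $V,V'$ are i.i.d., the expectation factorizes over the vertices of $H$: a vertex of degree $d$ contributes a pair of $d$-th mixed moments of $\nu$, each controlled by a constant to the $d$-th power under condition (c). Grouping multigraphs by isomorphism class and summing over vertex labelings produces a bound of the shape
\[ \|L_N^{\leq D}\|^2 - 1 \;\le\; \sum_{k=1}^{D}\frac{\beta^{2k}}{N^{k}}\sum_{[H]:\,|E(H)|=k} c_H\, Z_H(\Phi), \]
where $Z_H(M) \colonequals \sum_{\phi:V(H)\to[N]}\prod_{uv\in E(H)} M_{\phi(u)\phi(v)}$ is the graph sum of the connected multigraph $H$ on $M$, and $c_H$ absorbs the vertex-moment constants and symmetry factors. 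For the untruncated $\chi^2$ needed in Part 2, condition (d) is precisely what ensures summability across all degrees without a cutoff, by controlling the mixed moments $\Ex[(x\otimes x')^{\otimes d}]$ arising from pairs of vertex visits.

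The heart of the proof is a graph sum estimate of the form
\[ Z_H(\Phi) \;\le\; N\cdot\lambda_1(\Phi)^{k}\cdot \mathrm{poly}(k,|V(H)|) \]
for every connected multigraph $H$ with $k$ edges. Combined with Proposition~\ref{prop:mu1-block}, which identifies $\lambda_1(\Phi)/N \to \what\mu_1 = 1/\beta_*^2$, each $k$-th summand becomes $O((\beta^2\what\mu_1)^k) = O((\beta/\beta_*)^{2k})$ up to combinatorial factors, yielding a convergent geometric series whenever $\beta < \beta_*$. The number of isomorphism classes of multigraphs with $k$ edges grows as $k^{O(k)}$, which is precisely why the restriction $D = o(N/\log N)$ in Part 1 is sufficient to keep the truncated series bounded. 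The main obstacle, as flagged in the abstract, is proving this graph sum bound with the right $\lambda_1(\Phi)^k$ scaling: the elementary bound $Z_H(\Phi)\le \|\Phi\|_\infty^k N^{|V(H)|}$ and the classical spectral bounds for trees and cycles fall short for more intricate topologies such as theta graphs, dumbbells, or multigraphs with many parallel edges, so the argument must sharpen the graph sum estimates from free and traffic probability in order to extract $\lambda_1(\Phi)^k$ uniformly in the multigraph shape. Once this bound is in hand, the geometric estimates above feed into the low-degree and $\chi^2$ criteria to conclude both parts of the theorem.
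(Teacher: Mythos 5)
Your proposal transplants the moment-expansion/graph-sum strategy that the paper uses for \emph{general} variance profiles (Theorem~\ref{theorem:general-bound}) to the block-structured case, whereas the paper's actual proof of Theorem~\ref{theorem:comp-lower-bound-block-constant-variance} avoids graph sums entirely: it writes the overlap as $R^{(0)} = \frac{\beta^2}{2N}\sum_{a,b}(P(v_a^1\odot v_b^2))^\top\Gamma(P(v_a^1\odot v_b^2))$, exploiting that block averaging collapses the quadratic form to a fixed dimension $n$; the block sums $\vec(T^{(h)})/\sqrt{|S_h|}$ are independent and locally subgaussian, giving a tail bound $\P[|R^{(0)}|\geq t]\leq C\exp(-(1+\zeta)t)$ that feeds into Lemma~\ref{lem:overlap} for Part~1 and into a product of finite exponential moments for Part~2. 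So the routes are genuinely different, but yours has gaps that would need to be repaired.

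The central problem is your ``heart of the proof'' estimate $Z_H(\Phi)\leq N\cdot\lambda_1(\Phi)^k\cdot\mathrm{poly}(k,|V(H)|)$. With the prefactor $\beta^{2k}/N^k$ this yields a $k$-th term of order $N(\beta^2\lambda_1(\Phi)/N)^k = N(\beta/\beta_*)^{2k}(1+o(1))^k$, so the series sums to $O(N)$, not $O(1)$, which does not suffice for either the low-degree criterion or the $\chi^2$ bound. The estimate you actually need replaces the prefactor $N$ by something like $N\|\Phi\|_{\ell^\infty}/\lambda_1(\Phi)=O(1)$ per connected component (this is exactly what the paper's Proposition~\ref{prop:cycles} and Corollary~\ref{cor:graph-sum-bound} provide); the distinction between $N$ and $N\|\Phi\|_{\ell^\infty}/\|\Phi\|$ per component is precisely the ``sharpened bound for non-negative matrices'' the abstract advertises, and it is not cosmetic. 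Relatedly, your sum ranges over \emph{connected} multigraphs $H$, but the multi-indices $\alpha$ with every vertex of even total degree include disconnected multigraphs (disjoint unions of cycles are in fact the dominant terms), and the expectation does not restrict to connected shapes without a cumulant-type rearrangement you have not performed. Finally, to land exactly at the threshold $\beta_*$ rather than at $\beta_*/C$ for some constant $C>1$, you must ensure that degree-$2$ vertices contribute a factor of exactly $\|\Cov_\nu\|\leq 1$ (not a generic bounded-support constant) and that vertices of degree $\geq 3$ --- which are not killed by centering alone, since $\nu$ is not assumed symmetric here --- are compensated by the loss of a free index in $[N]$. This bookkeeping, together with checking that the $k^{O(k)}$ count of shapes against the $1/d!$ is genuinely subexponential after extracting the $(2d-1)!!$ matching count, is the actual crux; the paper only carries it out (via Stirling numbers and Touchard polynomials) in the general-profile setting, where it additionally must assume $\nu$ symmetric. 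The block-structured reduction the paper uses sidesteps all of this, which is why the theorem can be stated for non-symmetric $\nu$ and rank $\kappa>1$.
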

\noindent
We make a few remarks on these results.
Part 1 of the result generalizes Theorem~\ref{thm:low-deg-hom} to the inhomogeneous setting, and Part 2 does the same for Theorem~\ref{thm:info-hom}.
As mentioned above, the conclusion of Part 1 should be read as suggesting that nearly exponential time is required to compute a strongly separating statistic.
Our setting is a little bit more general than both those results as stated above and the setting treated by \cite{pak2024optimal}, because we allow for higher-rank signals.
This setting was also used (to prove different kinds of results) by \cite{GKKZ-2025-InhomogeneousSpikedWigner}, and for example Theorem~\ref{thm:low-deg-hom} for computational lower bounds in the homogeneous case was extended to higher rank in \cite{bandeira2021spectral}.
Lastly, a simple example of $\nu$ satisfying Condition~(d) of Part~2 is, in the case $\kappa = 1$, $\nu = \Unif(\{\pm 1\})$ (as included in Theorem~\ref{thm:info-hom} for the homogeneous case).
We note that \cite{GKKZ-2025-InhomogeneousSpikedWigner} (specifically, their Lemma~2.15) also prove statistical lower bounds along the lines of Part 2 of our result, except pertaining to estimation and the minimum mean squared error achievable by any estimator.
In general the threshold they obtain for $\beta$ for these lower bounds to hold is smaller than $\beta_*$, except in the very special case $\kappa = 1$ and $\nu = \sN(0, 1)$ (which, however, our result does not cover).

We next give our results for the case of general variance profiles.
In this case, our first description of $\beta_*$ is no longer sensible, since we do not have access to the block structure parameters $\bar{\Phi}$ and $\rho_i$ anymore.
But, we may mimic the formulation of Proposition~\ref{prop:mu1-block}.
To that end, we introduce a mild structural assumption, which we always make from now on without further mention:
\begin{assumption}
    \label{assumptionA}
    In an inhomogeneous spiked Wigner matrix model with general variance profile (possibly having infinite entries), the following limit exists:
    \[ \what{\mu}_1 \colonequals \lim_{N \to \infty} \frac{\lambda_1(\Phi^{(N)})}{N} > 0. \]
\end{assumption}
\noindent
If this is not the case, then the $\liminf$ and $\limsup$ of the above sequence are different, which our results will show suggests that the easiest subsequence of spiked matrix models in a given sequence is considerably easier than the hardest subsequence.
Thus, in this situation, we suggest that the hardness of the sequence taken together is unclear and one should consider one or both of these subsequences individually instead.
Finally, under Assumption~\ref{assumptionA}, we define a generalized critical $\beta$,
\begin{equation} 
\beta_* \colonequals \frac{1}{\sqrt{\what{\mu}_1}} = \lim_{N \to \infty} \sqrt{\frac{N}{\lambda_1(\Phi^{(N)})}}. \label{eq:beta-crit-general}
\end{equation}
By Proposition~\ref{prop:mu1-block}, this indeed generalizes the threshold from the block-structured case.

To handle the setting of general variance profiles, we slightly restrict the class of signals we consider: instead of low-rank signal matrices formed as the Gram matrices of i.i.d.\ vectors, we consider rank-one signals formed as $xx^{\top}$ for $x$ having i.i.d.\ entries.
Also, we introduce the mild but technically important assumption that the distribution of those entries of $x$ is \emph{symmetric}.
It seems likely that both of these assumptions can be relaxed, but also we expect this to require a considerably different technical approach.
We include some discussion of this following our proof in Section~\ref{sec:pf:theorem:general-bound}.

\begin{theorem}[General variance profile]
\label{theorem:general-bound}
Consider the inhomogeneous spiked Wigner matrix model with arbitrary variance profile (possibly both non-block-structured and having some infinite entries).
Suppose that $\sP_N$ is a probability measure over $\R^{N}$ such that $x \sim \sP_N$ has i.i.d.\ entries drawn from some probability measure $\nu$ on $\R$.
Let $X \sim \sX_N$ be $X = \frac{\beta}{\sqrt{N}} xx^{\top}$ for $x \sim \sP_N$ and some $\beta > 0$.
Then, the following hold:
\begin{enumerate}
    \item \textbf{Computational lower bound:} Suppose that $\nu$ satisfies the conditions:
    \begin{enumerate}
        \item $\E_{x \sim \nu}[x] = 0$.
        \item $\E_{x \sim \nu}[x^2] \leq 1$.
        \item $\nu$ is symmetric (i.e., $x \sim \nu$ has the same law as $-x$).
        \item If $x, x^{\prime} \sim \nu$ are independent, then $xx^{\prime}$ is $\sigma^2$-subgaussian for some $\sigma^2 > 0$.
    \end{enumerate}
    Suppose also that the variance profiles $\Delta^{(N)}$ satisfy the conditions, for some $D(N) \in \mathbb{N}$,
    \begin{enumerate}[resume]
        \item There exists a constant $B > 0$ such that, for all $N \geq 1$ and $1 \leq i, j \leq N$, $\Delta^{(N)}_{ij} \geq \frac{1}{B}$.
        \item $D(N) N^2 / \lambda_1(\Phi^{(N)})^3 \to 0$ as $N \to \infty$.
    \end{enumerate}
    If $\beta < \beta_*$, then there is no sequence of polynomials $f_N \in \R[Y]$ with $\deg(f_N) \leq D(N)$ that achieve strong separation.
    \item \textbf{Statistical lower bound:} Suppose that Conditions~(a), (b), (c), and (e) above are satisfied, as well as:
    \begin{enumerate}
        \item[(d\,$^{\prime}$)]
        When $x, x^{\prime} \sim \nu$ are i.i.d., then $xx^{\prime}$ is 1-moment-subgaussian (see Definition~\ref{def:moment-subgaussian}).
    \end{enumerate}
    If $\beta < \beta_*$, then there exists no sequence of functions $f_N$ that achieve strong separation, there exists no sequence of functions $t_N$ that achieve strong detection, and $\chi^2(\P_N \mid \Q_N) = O(1)$.
\end{enumerate}
\end{theorem}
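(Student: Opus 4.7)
The plan is to bound the chi-squared divergence $\chi^2(\P_N \mid \Q_N)$ for Part~2 and its low-degree analog $\|L^{\leq D}_N\|^2$ for Part~1. Since $\P_N$ and $\Q_N$ are Gaussian with a common covariance structure and differ only by the planted mean, both admit standard closed forms: for $X, X' \sim \sX_N$ independently,
\[
\chi^2(\P_N \mid \Q_N) + 1 = \Ex_{X, X'} \exp\left(\sum_{(i,j) \in \fJ,\, i \leq j} \Phi_{ij}\, X_{ij} X'_{ij}\right),
\]
and $\|L^{\leq D}_N\|^2$ is the expectation of the same expression with the exponential replaced by its degree-$D$ Taylor polynomial. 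Substituting the rank-one planted structure $X = \frac{\beta}{\sqrt{N}} xx^\top$, $X' = \frac{\beta}{\sqrt{N}} x' x'^\top$ and symmetrizing the index sum, the exponent reduces (modulo a diagonal correction controlled by Conditions~(b) and~(e)) to $\frac{\beta^2}{2N} u^\top \Phi u$, where $u_i \colonequals x_i x'_i$ are i.i.d.\ copies of the product of two independent samples from $\nu$. The task thus reduces to bounding $\Ex \exp\!\bigl(\frac{\beta^2}{2N} u^\top \Phi u\bigr)$ or its low-degree truncation.

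To attack this expectation, I would expand
\[
\Ex (u^\top \Phi u)^d = \sum_{i_1, j_1, \ldots, i_d, j_d} \Phi_{i_1 j_1} \cdots \Phi_{i_d j_d}\, \Ex\!\left[\prod_{k=1}^d u_{i_k} u_{j_k}\right],
\]
and use the symmetry of $\nu$ (Condition~(c)) to argue that the inner expectation vanishes unless every index in the multiset $(i_1, j_1, \ldots, i_d, j_d)$ appears an even number of times. Organizing the surviving terms by the multigraph on $[N]$ with edge multiset $\{\{i_k, j_k\}\}_{k=1}^d$ recasts the moment as a weighted sum of \emph{graph sums} of $\Phi$ over Eulerian multigraphs with $d$ edges, with vertex weights given by even moments of $u$. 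Condition~(d) (subgaussianity of $xx'$) bounds these by $\Ex u^{2m} \leq (Cm\sigma^2)^m$, while the stronger Condition~(d$'$) (1-moment-subgaussianity) gives the tight Gaussian-type moment comparison needed to sum the series.

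The key analytic step is then to invoke the paper's sharp graph sum bounds for non-negative $\Phi$ to show that, up to polynomial-in-$d$ correction factors,
\[
\frac{1}{d!} \Ex(u^\top \Phi u)^d \leq \left(\beta^2 \lambda_1(\Phi^{(N)})/N\right)^d \cdot \mathrm{poly}(d),
\]
so that the ratio of consecutive Taylor terms is asymptotically $\beta^2 \what\mu_1$, which is strictly less than $1$ exactly when $\beta < \beta_*$. For Part~2, Condition~(d$'$) combined with $\beta^2 \what\mu_1 < 1$ makes the full untruncated sum $O(1)$, giving $\chi^2(\P_N \mid \Q_N) = O(1)$ and hence, via Proposition~\ref{prop:chi-squared-testing}, ruling out all strong tests and strongly separating statistics. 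For Part~1, Condition~(f), $D(N) N^2 / \lambda_1(\Phi^{(N)})^3 \to 0$, absorbs the $\mathrm{poly}(d)$ factor in the bound up to $d = D(N)$, so $\|L^{\leq D(N)}_N\|^2 = O(1)$, which rules out strong separation by any polynomial of degree at most $D(N)$.

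The main obstacle is the graph sum bound with its sharp dependence on $\lambda_1(\Phi)$ alone. A classical trace moment or Hanson--Wright estimate of $\Ex(u^\top \Phi u)^d$ involves both $\|\Phi\|_{\mathrm{op}} \asymp N\what\mu_1$ and $\|\Phi\|_F$, and for non-negative $\Phi$ the Frobenius norm can be as large as $\|\Phi\|_F \asymp N$ without being reflected in the spectral radius, which would displace the threshold and yield a weaker lower bound. Obtaining the critical ratio $\beta^2 \what\mu_1$ therefore requires an estimate that routes every Eulerian graph sum through powers of the top eigenvalue alone, exploiting the non-negativity of $\Phi$ via Perron--Frobenius-type reasoning; this is exactly the new technical ingredient advertised in the abstract. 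Assembling this estimate and matching its combinatorial constants to the Taylor denominators $1/d!$ so that the geometric series converges at precisely the threshold $\beta_* = \what\mu_1^{-1/2}$---for all $d \leq D(N)$ in Part~1 and for all $d \geq 0$ in Part~2---is the delicate technical heart of the proof.
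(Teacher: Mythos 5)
Your outline does track the paper's strategy at the level of headings: the Gaussian $\chi^2$ formula, reduction to the overlap $\frac{\beta^2}{2N}u^{\top}\Phi' u$ with $u = x \odot x'$ (with $\Phi' = \Phi + \Diag(\Phi)$ handled as a negligible correction), the moment expansion in which symmetry of $\nu$ kills all index profiles that are not everywhere-even, the reorganization into graph sums, and the need for a graph-sum bound that routes everything through $\lambda_1(\Phi)$ using non-negativity. But the proposal defers exactly the content that constitutes the proof, and its one concrete quantitative claim is wrong in a way that matters. You assert $\frac{1}{d!}\,\E(u^{\top}\Phi u)^d \leq (\beta^2\lambda_1(\Phi)/N)^d\cdot\mathrm{poly}(d)$ and say Condition~(f) ``absorbs the poly$(d)$ factor.'' A polynomial-in-$d$ correction against a geometric ratio strictly below $1$ needs no absorbing: if that bound held, Part~1 would follow for arbitrary degree and Condition~(f) would be superfluous. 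In the actual argument, after bounding $\E u_1^{2k} \leq \tau^{2k-2}(2k-1)!!$ and regrouping the sum over even partitions $\rho$ as a sum over matchings $\pi$ with $\rho \preceq \pi$ (this is where symmetry is really spent), the contribution of the non-matching coarsenings is a Touchard polynomial evaluated at $\|\Phi'\|^3/(LN^2)$, and the resulting per-term correction is $\exp\bigl(O(N^2 d^2/\lambda_1(\Phi)^3)\bigr)$ --- superpolynomial in $d$ once $d$ is comparable to $N$. Condition~(f) exists precisely to keep this factor at $1+o(1)$ for $d \leq D(N)$; your plan has no mechanism producing it and hence no account of where (f) enters.

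The remaining gaps are the two ingredients you name but do not supply. First, the ``sharp graph sum bounds'' are not available off the shelf; they are proved in this paper (exact treatment of paths and cycles, then an edge-deletion argument reducing any min-degree-$2$ multigraph on $m$ vertices with $d$ edges to a disjoint union of paths and cycles at a cost of $(1+\|\Phi\|_{\ell^{\infty}})^{2d-2m}$, yielding $\tilde f_G(\Phi) \leq K^{2d-2m+|\conn(G)|}\|\Phi\|^m$ with $K = N(1+\|\Phi\|_{\ell^{\infty}})^2/\|\Phi\|$), and establishing them is part of the task. Second, the summation over the surviving structures is not a matter of ``matching combinatorial constants to $1/d!$'': there are $(2d-1)!!$ matchings, which against $1/d!$ and the $(\beta^2/2N)^d$ prefactor leaves $(\beta^2\|\Phi'\|/N)^d$ times $K^{|\cyc(G(\pi))|}$ per matching; one needs the cycle-counting identity $\sum_{\pi} K^{|\cyc(G(\pi))|} = 2^d\,\Gamma(d+\tfrac{K}{2})/\Gamma(\tfrac{K}{2})$ and then the hypergeometric evaluation $\sum_{d}\frac{\Gamma(d+K/2)}{d!\,\Gamma(K/2)}a^d = (1-a)^{-K/2}$, which is $O(1)$ only because the exponent $K a \asymp \beta^2(1+\|\Phi\|_{\ell^{\infty}})^2$ is dimension-free even though $K$ itself need not be. This cancellation is where $\beta_* = \what\mu_1^{-1/2}$ actually emerges, and it is absent from the proposal.
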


We have stated the assumptions to make the results more general, but let us point out two simplifications in typical situations.
First, if the measure $\nu$ is bounded (as in Theorem~\ref{theorem:comp-lower-bound-block-constant-variance}), then the subgaussianity in Condition~(d) is automatically satisfied.
Condition (d$^{\prime}$), on the other hand, may be viewed as a variation on a measure being 1-subgaussian, defined in terms of bounds on each moment rather than the moment generating function (see Proposition~\ref{prop:subgauss-moments} and surrounding discussion).
Second, if we have a uniform upper bound on the variance profile, say that for some $B > 0$ not depending on $N$ we have
\begin{equation} 
\label{eq:uniform-profile-bounds}
\frac{1}{B} \leq \Delta^{(N)}_{ij} \leq B \text{ for all } N \geq 1 \text{ and } i, j \in [N],
\end{equation}
then the same bound holds for the entries of $\Phi^{(N)}$, whereby $\lambda_1(\Phi^{(N)}) \geq \frac{1}{B}N$, and we may take any degree sequence $D(N) = o(N)$ in Condition (f).
But, we emphasize that our result is considerably more general and still gives non-trivial lower bounds for much more uneven variance profiles.
For instance, even if 
 we only have $\Delta^{(N)}_{ij} \leq N^{1/3 - \delta}$ for all $N \geq 1$ and $i, j \in [N]$ for some $\delta > 0$, then by the same token we have $\lambda_1(\Phi^{(N)}) \geq N^{2/3 + \delta}$ and we obtain a lower bound against strong separation by degree $D(N) = o(N^{3\delta})$ polynomials.
Following our previous heuristic discussion, this suggests that time close to $\exp(\Omega(N^{3\delta}))$ is required to compute a strongly separating statistic, and in particular that this cannot be done in polynomial time.

\subsection{Additional Results}

\subsubsection{Growth of Low-Degree \texorpdfstring{$\chi^2$-}{Chi-Squared }Divergence}
\label{sec:low-deg-div}

To prove our computational lower bounds, we will use that degree $D = D(N)$ polynomials cannot achieve strong separation provided that a quantity we call the \emph{low-degree $\chi^2$-divergence} and denote $\chi^2_{\leq D}(\P_N \mid \Q_N) = O(1)$ as $N \to \infty$ (see Definition~\ref{def:low-deg-div} and Proposition~\ref{prop:chi-squared-testing}).
It is tempting to view the opposite case, where $\chi^2_{\leq D}(\P_N, \Q_N) \to \infty$, as evidence that low-degree polynomials \emph{do} achieve strong separation.
Unfortunately, this is not the case: the low-degree $\chi^2$-divergence is only a ``one-sided'' quantity, and its divergence only ensures that $$\Ex_{Y \sim \P_N} f_N(Y) - \Ex_{Y \sim \Q_N} f_N(Y) = \omega\left(\sqrt{\Varx_{Y \sim \Q_N} f_N(Y)} \right).$$
In particular, it is possible that $$\Ex_{Y \sim \P_N} f_N(Y) - \Ex_{Y \sim \Q_N} f_N(Y) = O\left(\sqrt{\Varx_{Y \sim \P_N} f_N(Y)} \right),$$
in which case strong separation does not hold because $f_N$ fluctuates too much under the planted model.
It is not hard to construct pathological situations where this happens, but in fact this behavior has also been observed in some reasonable models in recent work \cite{BAHSWZ-2022-FranzParisiLowDegree,COGHWZ-2022-PhaseTransitionsGroupTesting,DMW-2023-DenseSubhypergraphsLowDegree,DDL-2023-GraphMatchingLowDegree}.

Still, the low-degree $\chi^2$-divergence being unbounded gives some partial evidence that low-degree polynomials or otherwise tractable functions may achieve strong separation and detection, and at the very least shows that our analysis of the divergence is tight, so we give converse results in variants of the settings of Theorems~\ref{theorem:comp-lower-bound-block-constant-variance} and~\ref{theorem:general-bound} stating that this happens when $\beta > \beta_*$.
Of course, it would be more convincing to show that a specific low-degree polynomial actually achieves strong separation, but we expect such polynomials to be related to the spectral algorithm using $\sH(Y)$, whose behavior at least in the case of general variance profiles (the setting of Theorem~\ref{thm:general-low-deg-div} below) is not yet well-understood, as we briefly discuss in Section~\ref{sec:numerical}.
\begin{theorem}
\label{thm:block-low-deg-div}
Consider a block-structured inhomogeneous spiked Wigner matrix model.
Suppose that $\sP_N$ is a probability measure over $\R^{N \times \kappa}$ such that $V \sim \sP_N$ has i.i.d.\ rows drawn from some probability measure $\nu$ on $\R^{\kappa}$.
Let $X \sim \sX_N$ be $X = \frac{\beta}{\sqrt{N}} VV^{\top}$ for $V \sim \sP_N$ and some $\beta > 0$.
Suppose that $\nu$ satisfies the conditions:
\begin{enumerate}
        \item[(a)] $\E_{x \sim \nu}[x] = 0$.
        \item[(b)] $\|\Cov_{x \sim \nu}[x]\| = \|\Ex_{x \sim \nu} xx^{\top}\| = 1$.
        \item[(c)] $\nu$ has bounded support.
    \end{enumerate}
    If $\beta > \beta_*$ and $D = D(N) \to \infty$ as $N \to \infty$, then $\chi^2_{\leq D}(\P_N \mid \Q_N) \to \infty$ as $N \to \infty$.
\end{theorem}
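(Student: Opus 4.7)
My plan rests on the standard Hermite-expansion formula for the low-degree $\chi^2$-divergence in a Gaussian observation model. Expanding $\mathrm{d}\P_N/\mathrm{d}\Q_N$ in the orthonormal Hermite basis for $\Q_N$ in the variables $\{Y_{ij}\}_{i \leq j}$ and truncating at total degree $D$ yields
\begin{align*}
    \chi^2_{\leq D}(\P_N \mid \Q_N) + 1 = \Ex_{V, V' \overset{\mathrm{i.i.d.}}{\sim} \sP_N} \sum_{k=0}^{D} \frac{S^k}{k!}, \qquad S \colonequals \frac{\beta^2}{N} \sum_{i \leq j} \Phi^{(N)}_{ij} \langle V_i, V_j\rangle \langle V'_i, V'_j\rangle.
\end{align*}
Each $\Ex S_N^k$ is non-negative (it is the squared $L^2(\Q_N)$-norm of the degree-$k$ component of the likelihood ratio), so the partial sums in $D$ are monotone increasing. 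By a straightforward diagonalization argument, to establish $\chi^2_{\leq D(N)}(\P_N \mid \Q_N) \to \infty$ for every $D(N) \to \infty$, it is enough to show that for each fixed $k$ the limit $a_k \colonequals \lim_{N\to\infty} \Ex S_N^k / k!$ exists, and that $\sum_{k \geq 0} a_k = +\infty$ whenever $\beta > \beta_*$.

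The existence of $a_k$ for each fixed $k$ is a graph-sum computation parallel to (and simpler than) the bounds underlying Theorem~\ref{theorem:comp-lower-bound-block-constant-variance}: expanding $S^k$ produces a sum over $k$-tuples of edges on $[N]$ weighted by products of $\Phi^{(N)}_{ij}$ and by mixed moments of $\nu$, and block-structure forces this sum to converge to an explicit integral in $\bar{\Phi}$, $\rho$, and the moments of $\nu$.

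To show the divergence $\sum_k a_k = +\infty$, my approach is spectral projection onto the Perron--Frobenius eigenvector $v_1 = v_1^{(N)}$ of $\Phi^{(N)}$. Setting $W_i \colonequals V_i \otimes V'_i \in \R^{\kappa^2}$ and $T \colonequals \sum_i v_{1,i} W_i$, and writing $\Phi^{(N)} = \lambda_1 v_1 v_1^{\top} + \Phi_\perp$, one obtains a decomposition
\begin{align*}
    S = \frac{\beta^2 \widehat{\mu}_1^{(N)}}{2}\, \|T\|^2 + R_N,
\end{align*}
with $\widehat{\mu}_1^{(N)} = \lambda_1/N \to \widehat{\mu}_1 = 1/\beta_*^2$ by Proposition~\ref{prop:mu1-block}, and $R_N$ a remainder collecting the contributions of the non-top spectrum of $\Phi$ together with the diagonal correction $\frac{\beta^2}{2N}\sum_i \Phi_{ii}\|W_i\|^2$. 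Block-structure makes $v_1$ piecewise constant and hence fully delocalized ($\|v_1\|_\infty = O(N^{-1/2})$), so a multivariate CLT for sums of independent bounded $\R^{\kappa^2}$-valued vectors yields $T \to G \sim \mathcal{N}(0, \Sigma \otimes \Sigma)$ in moments, where $\Sigma = \Covx_{x \sim \nu}[x]$, while $R_N$ converges to an essentially deterministic limit. Hypothesis (b) gives $\|\Sigma\| = 1$, so the top eigenvalue of $\Sigma \otimes \Sigma$ equals $\|\Sigma\|^2 = 1$, and the Gaussian MGF $\Ex \exp(t\|G\|^2) = \prod_a (1 - 2t \mu_a)^{-1/2}$ (where $\mu_a$ are the eigenvalues of $\Sigma \otimes \Sigma$) has radius of convergence $\tfrac{1}{2}$. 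Plugging $t = \frac{\beta^2 \widehat{\mu}_1}{2}$ into the Hermite identity, the lower-bound estimate $a_k \gtrsim (\beta^2 \widehat{\mu}_1)^k / \sqrt{k}$ for large $k$ (from the top-eigenvalue piece alone) then shows $\sum_k a_k = +\infty$ precisely when $\beta^2 \widehat{\mu}_1 > 1$, i.e., $\beta > \beta_*$.

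The principal obstacle will be controlling the remainder $R_N$ uniformly in $k$: since $\Phi_\perp$ may have negative eigenvalues, $R_N$ can take negative values, so we cannot simply drop it. My intended resolution is to apply Hölder in the Hermite expansion against uniform $L^p$ bounds on $R_N$ (which are cheap because $V$ is bounded and $\|\Phi_\perp\|/N$ is order one), or equivalently to split into a typical regime (where $R_N$ concentrates to its deterministic limit) and an atypical regime whose contribution is absorbed into an $O(1)$ constant. Because we only need divergence of partial sums with $D(N)\to\infty$ arbitrarily slowly, the argument can operate entirely at finite $k$ and then diagonalize, which sidesteps the more delicate question of uniform CLT rates in large moments.
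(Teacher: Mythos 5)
Your overall strategy is the same as the paper's: expand $1+\chi^2_{\leq D}$ into the non-negative terms $\E S^k/k!$, take term-by-term limits via a CLT made available by the block structure, keep only the top-eigenvalue contribution, and conclude divergence of the series when $\beta^2\what{\mu}_1>1$ (your asymptotic $a_k\gtrsim(\beta^2\what{\mu}_1)^k/\sqrt{k}$ matches the paper's $\frac{1}{4d}(\beta^2\what{\mu}_1)^{2d}$ up to the polynomial factor, and the reduction to fixed $k$ followed by diagonalization is exactly the paper's ``fix $d_0$, let $N\to\infty$, then $d_0\to\infty$'' argument).

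The one step where your route differs is also where it has a genuine problem: you split $S=\frac{\beta^2\what{\mu}_1^{(N)}}{2}\|T\|^2+R_N$ at finite $N$ and must control the signed remainder. Your claim that $R_N$ ``converges to an essentially deterministic limit'' is false: only the diagonal correction is asymptotically deterministic (by the LLN), while the $\Phi_\perp$ contribution is $\frac{\beta^2}{2}\sum_{a\geq 2}\what{\mu}_a\|T_a\|^2$ for the remaining $O(1)$ eigendirections of $\Phi^{(N)}$, which converges to a non-degenerate (and possibly signed, since $\Phi_\perp$ can have negative eigenvalues) Gaussian quadratic form correlated with $\|T\|^2$. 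H\"older then only bounds $|\E A^jB^{k-j}|$ from above; it does not prevent the cross terms from cancelling the main term in $\E(A+B)^k$, so the lower bound $a_k\gtrsim(\beta^2\what{\mu}_1)^k/\sqrt{k}$ does not follow from the top-eigenvalue piece alone as written. The paper sidesteps this entirely by not splitting at finite $N$: it first shows the \emph{whole} overlap converges in moments to $\frac{\beta^2}{2}\Tr(G\widetilde{\Gamma}G^{\top})$ with $G$ Gaussian (using all $n$ block averages jointly), and only then lower bounds the $d$-th moment of this limiting quadratic form by $2^{d-1}(d-1)!\,\Tr(A^d)$ via Proposition~\ref{prop:baopaper}; since $\Tr(A^d)\geq\lambda_1(A)^d$ for even $d$ regardless of the signs of the other eigenvalues, the ``remainder'' never needs to be controlled. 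You should adopt this order of operations (full limit first, then the trace-moment inequality) to close the gap; with that change your argument coincides with the paper's.
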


\begin{theorem}
\label{thm:general-low-deg-div}
Consider an inhomogeneous spiked Wigner matrix model with arbitrary variance profile.
Suppose that $\sP_N$ is a probability measure over $\R^{N}$ such that $x \sim \sP_N$ has i.i.d.\ entries drawn from some probability measure $\nu$ on $\R$.
Let $X \sim \sX_N$ be $X = \frac{\beta}{\sqrt{N}} xx^{\top}$ for $x \sim \sP_N$ and some $\beta > 0$.
Suppose that $\nu$ satisfies the conditions:
    \begin{enumerate}
        \item[(a)] $\E_{x \sim \nu}[x] = 0$.
        \item[(b)] $\E_{x \sim \nu}[x^2] = 1$.
        \item[(c)] $\nu$ has bounded support.
    \end{enumerate}
    Suppose also that the reciprocal variance profiles $\Phi^{(N)}$ satisfy the conditions:
    \begin{enumerate}[resume]
        \item[(d)] $\lim_{N \to \infty} \max_{i \in [N]} \|v_i(\Phi^{(N)})\|_{\infty} = 0$, where $v_i(\cdot)$ are the unit eigenvectors of a symmetric matrix.
        \item[(e)] Either $\Phi^{(N)} \succeq 0$ for all $N \geq 1$, or $\rank(\Phi^{(N)}) \leq K$ for all $N \geq 1$ for some $K$ not depending on $N$ and the limits $\what{\mu}_i \colonequals \lim_{N \to \infty} \lambda_i(\Phi^{(N)}) / N$ exist for each $i = 1, \dots, K$.
    \end{enumerate}
    If $\beta > \beta_*$ and $D = D(N) \to \infty$ as $N \to \infty$, then $\chi^2_{\leq D}(\P_N \mid \Q_N) \to \infty$ as $N \to \infty$.
\end{theorem}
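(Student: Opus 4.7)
The plan is to lower bound $\chi^2_{\leq D}(\P_N \mid \Q_N)$ by an explicit sum of non-negative terms and show this sum diverges whenever $\beta > \beta_*$ and $D \to \infty$. I would start from the Hermite expansion: with $Y_{ij} \sim \sN(X_{ij}, \Delta_{ij})$ independently for $(i,j) \in \fJ$, the normalized Hermite polynomials $\tilde{h}_\alpha(Y) = \prod_{(i,j) \in \fJ} \tilde{h}_{\alpha_{ij}}(Y_{ij}/\sqrt{\Delta_{ij}})$ form an orthonormal basis of polynomials in $L^2(\Q_N)$, and the standard calculation gives
$$1 + \chi^2_{\leq D}(\P_N \mid \Q_N) = \sum_{k=0}^D a_k(N), \qquad a_k(N) := \frac{1}{k!}\,\Ex_{X,X'}\!\left[\bigg(\sum_{(i,j) \in \fJ} X_{ij} X'_{ij}\, \Phi_{ij}\bigg)^{\!k}\right],$$
where $X, X'$ are independent copies of $X \sim \sX_N$. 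Since $a_k(N) = \sum_{|\alpha|=k}(\Ex_{Y \sim \P_N}[\tilde{h}_\alpha(Y)])^2 \geq 0$, the partial sums are non-decreasing in $D$, so I can argue termwise and send $D \to \infty$ at the end. Substituting $X = (\beta/\sqrt{N})\,xx^\top$ and setting $y := x \odot x'$, the inner sum equals $\frac{\beta^2}{2N}(y^\top \Phi y + \sum_i y_i^2 \Phi_{ii})$, and the coordinates $y_i = x_i x'_i$ are i.i.d.\ with $\Ex[y_i] = 0$, $\Ex[y_i^2] = 1$, and bounded support.

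In the PSD sub-case, I would use $y^\top \Phi y \geq \lambda_1(\Phi)\, Z^2$ with $Z := \langle v_1(\Phi), y\rangle$, and drop the non-negative diagonal correction. By Condition~(d), $\|v_1(\Phi)\|_\infty \to 0$, so Lindeberg's CLT gives $Z \dto \sN(0,1)$; bounded support of $\nu$ makes the standard combinatorial expansion of $\Ex[Z^{2k}]$ collapse onto perfect matchings, giving $\Ex[Z^{2k}] \to (2k-1)!!$ for each fixed $k$. Combined with $\lambda_1(\Phi)/N \to \what{\mu}_1$, this yields
$$\liminf_{N \to \infty} a_k(N) \,\geq\, \binom{2k}{k}\!\left(\frac{\beta^2 \what{\mu}_1}{4}\right)^{\!k} \sim \frac{(\beta^2 \what{\mu}_1)^k}{\sqrt{\pi k}},$$
whose sum over $k$ diverges once $\beta^2\what{\mu}_1 > 1$, i.e., $\beta > \beta_*$. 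In the bounded-rank sub-case, I would instead decompose $y^\top \Phi y = \sum_{r=1}^K \lambda_r(\Phi)\, Z_r^2$ with $Z_r := \langle v_r(\Phi), y\rangle$, and combine delocalization of the $K$ eigenvectors with Cram\'{e}r--Wold to obtain the joint CLT $(Z_1,\ldots,Z_K) \dto (G_1,\ldots,G_K) \sim \sN(0, I_K)$. Moment convergence then gives $a_k(N) \to c_k \geq 0$ with $\sum_k c_k = \Ex\!\left[\exp\!\bigl(\tfrac{\beta^2}{2} \sum_{r=1}^K \what{\mu}_r(G_r^2 + 1)\bigr)\right]$, and this MGF factors into a product of one-dimensional Gaussian chi-squared MGFs with a singularity at $\beta^2 \what{\mu}_1 = 1$, so the series diverges precisely when $\beta > \beta_*$. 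In either sub-case, non-negativity of each $a_k(N)$ closes the argument: for fixed $K$ and $N$ large enough that $D(N) \geq K$, $\sum_{k=0}^{D(N)} a_k(N) \geq \sum_{k=0}^K a_k(N) \to \sum_{k=0}^K c_k$, and sending $K \to \infty$ gives $\chi^2_{\leq D(N)}(\P_N \mid \Q_N) \to \infty$.

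The main obstacle I expect is the termwise moment convergence $a_k(N) \to c_k$: this requires the combinatorial expansion of high (joint) moments of the $Z_r$'s to be controlled uniformly in $N$, and in particular that non-matching diagrams vanish. Bounded support of $\nu$ is exactly what makes these non-matching contributions of order $O(\max_r\|v_r(\Phi)\|_\infty^2) \to 0$, leaving only the Gaussian matching terms in the limit. A secondary technical issue is the diagonal perturbation $\tfrac{\beta^2}{2N}\sum_i y_i^2 \Phi_{ii}$: in the bounded-rank case, delocalization forces $\max_i \Phi_{ii} = o(N)$ and $\tfrac{1}{N}\Tr(\Phi) \to \sum_r \what{\mu}_r$, so this term only shifts the limiting variable by a known constant (absorbed above as the ``$+1$''); in the PSD case its non-negativity lets it simply be dropped from the lower bound.
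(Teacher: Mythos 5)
Your proposal follows essentially the same route as the paper's proof: reduce to the non-negative moment series $\sum_{k\le D} a_k(N)$ via Lemma~\ref{lem:low-deg-prelim}, spectrally decompose the overlap as $\frac{\beta^2}{2N}\sum_i \lambda_i(\Phi)\langle v_i, y\rangle^2$, use the delocalization condition (d) with Lindeberg and Cram\'{e}r--Wold to get the (joint) Gaussian limit of the $\langle v_r,y\rangle$, and conclude by termwise moment convergence plus divergence of the limiting series when $\beta^2\widehat{\mu}_1>1$. The PSD sub-case is correct as written and matches the paper's (your $\binom{2k}{k}(\beta^2\widehat{\mu}_1/4)^k$ is the paper's $\frac{(2d_0-1)!!}{(2d_0)!!}(\beta^2\widehat{\mu}_1)^{d_0}$), and your treatment of the diagonal term is if anything more explicit than the paper's, which simply refers back to the block-structured argument.

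The one step that does not go through as stated is in the bounded-rank sub-case: you assert $\sum_k c_k = \E\bigl[\exp\bigl(\tfrac{\beta^2}{2}\sum_r \widehat{\mu}_r(G_r^2+1)\bigr)\bigr]$ and conclude divergence from the singularity of this MGF. That interchange of sum and expectation is justified by Tonelli only when the limiting variable is non-negative; a bounded-rank $\Phi$ need not be PSD, so some $\widehat{\mu}_r$ may be negative, the exponent is then indefinite, and $\E\bigl[e^{|R_\infty|}\bigr]=\infty$ blocks Fubini. The non-negativity $c_k\ge 0$ that you do have comes from $a_k(N)\ge 0$, not from anything intrinsic to $R_\infty$, so it does not by itself deliver the identity. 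The paper sidesteps this by a direct moment lower bound for indefinite Gaussian quadratic forms (Proposition~\ref{prop:baopaper}, valid because $\Tr(D^\ell)=\lim_N \Tr((\Phi^{(N)}/N)^\ell)\ge 0$ for entrywise non-negative $\Phi$), which gives $c_{k}\gtrsim \frac{1}{2k}(\beta^2\widehat{\mu}_1)^{k}$ for even $k$. Your argument can be patched without that proposition by applying Fatou along even truncations, since $\sum_{k=0}^{2m}c_k=\E[\exp^{\le 2m}(R_\infty)]$ and $\exp^{\le 2m}(\cdot)>0$ (Proposition~\ref{prop:exp-bounds}), whence $\liminf_m \sum_{k=0}^{2m}c_k \ge \E[e^{R_\infty}]=\infty$; but as written the step is a genuine gap in the indefinite case.
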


\subsubsection{Statistical-to-Computational Gaps}

Our results imply that the important phenomenon of \emph{statistical-to-computational gaps} in some models of low-rank estimation persists in the case of inhomogeneous noise.
We do not go into the details, but sketch the idea of this implication here.
Consider as an illustrative example the \emph{sparse Rademacher} prior,
\[ \nu = \frac{p}{2} \delta_{-1 / \sqrt{p}} + (1 - p) \delta_0 + \frac{p}{2}\delta_{1 / \sqrt{p}} \]
for some constant $p \in (0, 1)$ not depending on $N$.
For any constant $p$ this $\nu$ is bounded and has mean 0 and variance 1, and thus all of our computational lower bounds apply to signals $x \sim \sP_N = \nu^{\otimes N}$.
For a simple concrete setting, consider this choice with a variance profile that is uniformly bounded above and below, as in \eqref{eq:uniform-profile-bounds}.
Then (and under Assumption~\ref{assumptionA}), Theorem~\ref{theorem:general-bound} implies that polynomials of degree $o(N)$ cannot detect such a sparse signal whenever $\beta < \beta_*$, where we emphasize that $\beta_*$ depends only on the sequence of variance profiles, not on $p$.
More heuristically, this suggests that to detect such a signal requires nearly exponential time in $N$.

In the homogeneous case, \cite{BMVVX-2018-InfoTheoretic} showed that, in fact, there is a $p_0 \in (0, 1)$ such that, whenever $p < p_0$, then a brute force exponential-time computation of the likelihood ratio \emph{does} successfully distinguish $\Q_N$ from $\P_N$ for $\beta \in (\beta_*^{\prime}, \beta_*)$ for some $\beta_*^{\prime} < \beta_*$.
Thus, in these cases, there is a \emph{statistical-to-computational gap}: for certain parameter regimes, it is possible to distinguish the null and planted distributions, but, based on our analysis of low-degree polynomials, we believe it is only possible to do so very inefficiently (see the discussion of this line of work on sparse PCA above in Section~\ref{sec:homog-spiked} for more references).

It is tedious but straightforward to reproduce the analysis of \cite{BMVVX-2018-InfoTheoretic} for the likelihood ratio under the above class of variance profiles.
This calculation, which we do not include here for the sake of brevity, shows that the statistical-to-computational gap in this model of sparse PCA remains for any variance profile as above (specifically, for $B$ in the constant in \eqref{eq:uniform-profile-bounds}, there exists a $p_0 = p_0(B)$ such that, for any variance profile satisfying Assumption~\ref{assumptionA} and the condition in \eqref{eq:uniform-profile-bounds}, the same result described above holds for all $\beta \in (\beta_*^{\prime}, \beta_*)$ for some $\beta_*^{\prime} = \beta_*^{\prime}(p, B) < \beta_*$).

\subsubsection{Channel Universality}

Lastly, we remark that our results may be combined with a mild generalization of the universality results in \cite{kunisky2025low}.
That work considered more general models of the observation $Y$, where, in the context of Definition~\ref{def:inhom-wigner},
\[ Y_{ij} \sim \gamma_{X_{ij}} \]
for some ``channel'' of probability measures $(\gamma_x)_{x \in \R}$ through which one observes the signal $X$.
While that work took this family to not depend on the indices $(i, j)$, it is straightforward to extend those results to families
\[ Y_{ij} \sim \gamma^{(i, j)}_{X_{ij}} \]
provided the $\gamma^{(i, j)}_x$ satisfy some uniform regularity over different $(i, j)$.
See Remark~5 of \cite{kunisky2025low} for more discussion.

Together with our calculations, such a generalization of the main bounds of \cite{kunisky2025low} gives lower bounds against low-degree polynomials for additional inhomogeneous observation models beyond ones with additive Gaussian noise, requiring only that the $Y_{ij}$ are observed independently (and some technical regularity conditions on the entrywise channels $\gamma^{(i, j)}_x$).
Per the calculations in \cite{kunisky2025low}, the role of $\Phi = \Phi^{(N)}$, the reciprocal variance profile in our setting, would then be played by the matrix formed by the \emph{Fisher information} of each family $(\gamma^{(i, j)}_x)_{x \in \R}$ at $x = 0$.

For instance, this approach applies to inhomogeneous non-Gaussian additive noise or to the degree-corrected stochastic block model discussed in \cite{GKKZ-2025-InhomogeneousSpikedWigner}.
We remark also that, for that model and others, \cite{GKKZ-2025-InhomogeneousSpikedWigner} study analogous universality principles for information-theoretic quantities like the mutual information between the signal $X$ and the observation $Y$, while those we propose here are for computational ones, namely the low-degree $\chi^2$-divergences.

\subsection{Related Work}
\label{sec:related}

\paragraph{Spiked matrix models}
The literature on spiked matrix models in general has grown very deep since their introduction to statistics by Johnstone \cite{Johnstone-2001-LargestEigenvaluePCA}.
In particular, we have not mentioned many important related directions such as spiked models of rectangular or covariance matrices, for which analogs of our setting would also be sensible to analyze.
A survey of many of these models from a statistical point of view is given by \cite{JP-2018-PCASurvey}, and useful references for their mathematical analysis are \cite{CDM-2016-DeformedRandomMatricesFreeProbability,Capitaine-2017-HDRRandomMatrices}.
There is a long and ongoing line of work on the algorithmic tractability of such models in the homogeneous case; some relevant results, also cited above, include \cite{deshpande2015asymptotic,KXZ-2016-MutualInformationMatrix,BDMKLZ-2016-MutualInformationReplica,lesieur2017constrained,barbier2018rank,PWBM-2018-PCAI,LM-2019-LimitsLowRankEstimation,AKJ-2018-SpikedWigner}.

\paragraph{Inhomogeneous spiked matrix models}
A rectangular version of the model we describe in Definition~\ref{def:inhom-wigner} was studied by \cite{BR-2020-InfoTheoryMultiviewSpikedMatrix}, while the block-structured case of our model was studied by \cite{BR-2022-InfoTheoryInhomogeneousSpikedMatrix}.
The same block-structured model is proposed in \cite{ACCM-2022-PhysicsMultiChannelSpikedWigner}, following prior work on similar questions formulated in the language of statistical physics in \cite{ACCM-2021-MultiSpeciesMeanFieldSpinGlass,ACCM-2021-DeepBoltzmannNishimori}.
The spectral algorithm using $\sH(Y)$ was derived by \cite{pak2024optimal}, who also conjectured a version of Theorem~\ref{thm:bbp-inhom}.
Information-theoretic lower bounds and a derivation of the conjectural algorithmic threshold $\beta_*$ were carried out by \cite{GKKZ-2025-InhomogeneousSpikedWigner}, again only pertaining to block-structured models (and also requiring some further structural assumptions).
Theorem~\ref{thm:bbp-inhom} on the BBP transition for $\sH(Y)$ was proved concurrently by \cite{BCSVH-2024-MatrixConcentrationFreeProbability2,MKK-2024-OptimalPCABlockSpikedMatrix}; a similar result for the special case of $n = 2$ blocks was obtained previously in \cite{lee2024phase}.
To the best of our knowledge, there does not yet exist a version of Theorem~\ref{thm:bbp-inhom} for a general class of non-block-structured variance profiles.
The closest known seem to be the results of \cite{bigot2021freeness}, which concern $Y$ rather than $\sH(Y)$ and do not give closed forms for the outlier eigenvalue or the correlation between the signal vector $x$ and $v_1(Y)$.

\paragraph{Inhomogeneity in random matrix theory}
Inhomogeneous variance profiles have been studied more thoroughly in random matrix theory without the statistical setting we work in here.
A more general family of variance profile that is common in those works is to have $\Delta_{ij} = f(i/N, j/N)$ for some symmetric $f: [0, 1]^2 \to \R_{\geq 0}$, usually subject to various further regularity assumptions.
Block-structured models can then be expressed as step functions, but ``smoother'' variance profiles can also be expressed by continuous $f$.
We give a numerical study of BBP transitions for an example of such a model in Section~\ref{sec:numerical}.
Prior work has established characterizations of the limiting empirical spectral distribution for matrices with independent entries under certain such variance profiles as well as various local properties of the eigenvalues; see, e.g.,
\cite{shlyakhtenko1996random,ajanki2017universality,ajanki2019quadratic,zhu2020graphon}.
Completely general variance profiles have also appeared in various results in random matrix theory.
The most common instance of these is the case of \emph{generalized Wigner matrices}, which have the boundedness condition \eqref{eq:uniform-profile-bounds} and $\sum_{j = 1}^N \Delta_{ij} = N$ for all $i \in [N]$.
Such matrices are known to share many structural properties, including the limiting semicircle law for the empirical spectral distribution, with the case $\Delta_{ij} \equiv 1$ of i.i.d.\ matrices (see, e.g., \cite{ajanki2019quadratic} or Corollary 2.11 of \cite{BBVH-2021-MatrixConcentrationFreeProbability} for a remarkable further generalization).
Outside of this special class, most results seek only to control only some partial information about the spectrum like its moments or the spectral norm \cite{BVH-2016-NonasymptoticBoundsRandomMatrices,VanHandel-2017-StructuredRandomMatrices,LvHY-2018-NonhomogeneousRandomMatrices,BBVH-2021-MatrixConcentrationFreeProbability}.

\paragraph{Low-degree polynomial algorithms}
Studying low-degree polynomials as algorithms for hypothesis testing problems originates in certain technical considerations in the study of sum-of-squares semidefinite programming relaxations \cite{BHKKMP-2019-PlantedClique}.
Such algorithms were then first studied as a function class of independent interest by works including \cite{HKPRSS-2017-SOSSpectral,HS-2017-BayesianEstimation,Hopkins-2018-Thesis}.
In recent years, their analysis has become a popular approach to giving evidence of computational hardness of various problems.
We do not attempt to survey the full literature here, but the surveys \cite{kunisky2019notes,Wein-2025-LowDegreeSurvey} give a useful overview.

\section{Notation}

\paragraph{Basic notions}
We denote by $\R$ the set of real numbers, and its non-negative and strictly positive subsets by $\R_{\ge 0} \colonequals \{x \in \R : x \geq 0\}$ and $\R_{> 0} \colonequals \{x \in \R : x > 0\}$. 
For a positive integer $n$, we write $[n] = \{1,2,\dots,n\}$. For even $2n$, we will sometimes identify $[2n]$ with the paired index set $\{1,1',2,2',\dots,n,n'\}$, where the correspondence $i \leftrightarrow i'$ indicates paired elements of $[2n]$ (see Section~\ref{sec:graph-sums}).
The notation $1$ may denote either the scalar $1$ or the all-ones vector of appropriate dimension, depending on the context. For an event $E$, we denote by $\bm{1}\{E\}$ the indicator function of $E$, i.e.,
\[
\bm{1}\{E\} \colonequals
\begin{cases}
1 & \text{if } E \text{ occurs},\\
0 & \text{otherwise.}
\end{cases}
\]

\paragraph{Linear algebra}
For any set $S \subseteq \R \cup \{+\infty\}$, we write $S^{m \times n}$ for the collection of all $m \times n$ matrices with entries in $S$. When $m = n$, the subset of symmetric matrices in $S^{n \times n}$ is denoted $S^{n \times n}_{\sym}$.
For a symmetric matrix $A \in \R^{n \times n}_{\sym}$, we write $\lambda_1(A) \geq \cdots \geq \lambda_n(A)$ for the sorted (real) eigenvalues of $A$.
The operation $\vec(A)$ denotes the vectorization of $A$, obtained by stacking its columns into a single vector in $\R^{n^2}$. 
The trace of a square matrix $A$ is denoted by $\Tr(A) = \sum_{i=1}^n A_{ii}$.
For a matrix $A \in \R^{m \times n}$, we denote by $\|A\|$ its spectral norm, and by $\|A\|_{\ell_{\infty}}$ its entrywise $\ell_\infty$ norm, defined as
\[
\|A\| \colonequals \sup_{\|x\| = 1} \|A x\|, \quad \|A\|_{\ell_{\infty}} \colonequals \sup_{i,j} \left|A_{i,j}\right|.
\]
For a vector $x \in \R^n$, we write $\|x\| = (\sum_{i=1}^n x_i^2)^{1/2}$ for its Euclidean norm, and $\|x\|_{\ell_\infty} = \max_{1 \le i \le n} |x_i|$ for its $\ell_\infty$-norm. The standard Euclidean inner product between $x, y \in \R^n$ is denoted by $\langle x, y \rangle = x^\top y$. The standard Euclidean or Frobenius inner product between matrices $A, B \in \R^{m \times n}$ is denoted by $\langle A, B \rangle = \Tr(A^{\top} B)$.
The operation $\diag(\cdot)$ is defined as follows:
\begin{itemize}
    \item when applied to a vector $v \in \R^n$, $\diag(v)$ denotes the diagonal matrix with diagonal entries $(v_1,\dots,v_n)$;
\item when applied to a matrix $A \in \R^{n \times n}$, $\diag(A)$ denotes the diagonal matrix that retains the diagonal entries of $A$ while setting all off-diagonal entries to zero.
\end{itemize}
For element-wise operations on matrices of the same dimension, we write $A \odot B$ for their Hadamard product, defined by $(A \odot B)_{ij} = A_{ij} B_{ij}$, and $A^{\odot -1}$ for the Hadamard inverse, whenever all entries of $A$ are nonzero, defined by $(A^{\odot -1})_{ij} = A_{ij}^{-1}$.
$A \otimes B$ denotes the Kronecker product between matrices $A$ and $B$, defined by
\[
A \otimes B = [A_{ij} B]_{i,j},
\]
yielding a block matrix of size $(m p) \times (n q)$ when $A \in \R^{m \times n}$ and $B \in \R^{p \times q}$.

\paragraph{Asymptotics}
We use the standard asymptotic notations $O(\cdot), o(\cdot), \Omega(\cdot), \omega(\cdot)$, and $\Theta(\cdot)$ in their usual sense, always referring to the limit $N \to \infty$. 

\paragraph{Probability}
We write $\N(\mu, \Sigma)$ for the multivariate Gaussian distribution with mean $\mu$ and covariance $\Sigma$, 
$\Pois(\lambda)$ for the Poisson distribution with rate $\lambda$, 
$\chi^2(k)$ for the chi-squared distribution with $k$ degrees of freedom, 
and $\Gamma(\alpha, \beta)$ for the Gamma distribution with shape parameter $\alpha$ and scale parameter $\beta$.
Throughout, the notation $X_N \pto X$ denotes convergence in probability, and $X_N \dto X$ denotes convergence in distribution. We write $\chi^2(\P \mid \Q)$ for the chi-squared divergence between two probability measures $\P$ and $\Q$, defined as
\[
\chi^2(\P \mid \Q) = \int \left(\frac{d\P}{d\Q} - 1\right)^2 d\Q,
\]
whenever $\P$ is absolutely continuous with respect to $\Q$.

\section{Preliminaries}

\subsection{Linear Algebra}
\label{sec:linalg}

We will apply the following result to the reciprocal variance profile matrix $\Phi^{(N)}$ throughout without mention.
\begin{theorem}[Non-negative Perron-Frobenius]
    \label{thm:perron}
    Let $A \in \R^{N \times N}_{\sym}$ have non-negative entries $A_{ij} \geq 0$.
    Then, $\|A\| = \lambda_1(A) \geq 0$, i.e., for all $i = 2, \dots, N$, we have $|\lambda_i(A)| \leq \lambda_1(A)$.
\end{theorem}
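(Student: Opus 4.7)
The plan is to use the variational (Rayleigh quotient) characterization of the extreme eigenvalues of a symmetric matrix, combined with the elementary observation that, when $A$ has non-negative entries, replacing a vector by its entrywise absolute value can only \emph{increase} the associated quadratic form.

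First, I would note that for any symmetric $A$, the spectral norm equals the largest eigenvalue in absolute value, so it suffices to prove that $\lambda_1(A) \geq |\lambda_i(A)|$ for every $i \in [N]$ (this already also gives $\lambda_1(A) \geq 0$, but it also follows immediately from $\lambda_1(A) = \max_{\|x\|=1} x^\top A x \geq e_1^\top A e_1 = A_{11} \geq 0$).

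For the main inequality, fix $i$ and let $v$ be a unit eigenvector with $A v = \lambda_i(A) v$. Define $|v| \in \R^N$ by $|v|_k = |v_k|$, so that $\||v|\| = \|v\| = 1$. Using $A_{k\ell} \geq 0$ and the triangle inequality,
\[
|v|^\top A |v| \;=\; \sum_{k,\ell} A_{k\ell}\, |v_k|\, |v_\ell| \;\geq\; \Bigl|\sum_{k,\ell} A_{k\ell}\, v_k\, v_\ell\Bigr| \;=\; |v^\top A v| \;=\; |\lambda_i(A)|.
\]
On the other hand, by the Rayleigh characterization $\lambda_1(A) = \max_{\|x\|=1} x^\top A x$, so $|v|^\top A |v| \leq \lambda_1(A)$. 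Chaining these two bounds yields $\lambda_1(A) \geq |\lambda_i(A)|$, proving the claim.

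There is essentially no obstacle here; the result is classical and the only step to be careful with is the triangle inequality that exploits $A_{k\ell} \geq 0$ (the argument would fail for a general symmetric matrix, as one would then need $|A_{k\ell}|$ in the first sum). The same argument of course recovers the statement for the operator norm via $\|A\| = \max_i |\lambda_i(A)|$.
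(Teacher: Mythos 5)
Your proof is correct and complete; the paper states this result as a classical fact without providing a proof, and your Rayleigh-quotient argument with the entrywise absolute value trick is exactly the standard way to establish it. The one step that needed care --- the triangle inequality exploiting $A_{k\ell} \geq 0$ --- is handled properly.
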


The following general results about block-structured matrix sequences will be useful.
Suppose that $A^{(N)} \in \R^{N \times N}_{\sym}$ is block-structured with parameters $(\bar{A}, \rho_1, \dots, \rho_n)$.
For a given $N \geq 1$, recall that this means there is a $q: [N] \to [n]$ such that $A^{(N)}_{ij} = \bar{A}^{(N)}_{q(i)q(j)}$.

\begin{proposition}
    \label{prop:block-structured-spectrum}
    Let $S_i \colonequals q^{-1}(i)$ for $i = 1, \dots, n$, let $s_i \colonequals \sqrt{|S_i|}$, and $s = s^{(N)}$ be the vector of the $s_i$.
    Consider the matrix
    \[ B = B^{(N)} = (ss^{\top}) \circ \bar{A} \in \R^{n \times n}_{\sym}. \]
    Then, the following hold:
    \begin{enumerate}
    \item The non-zero eigenvalues of $A^{(N)}$ are the same as those of $B$.
    \item Suppose $v = (v_1, \dots, v_n)^{\top} \in \R^n$ is an unit eigenvector of $B$ with eigenvalue $\nu$.
    Then, $w \in \R^N$ with entries
    \[ w_i = \frac{v_{q(i)}}{s_{q(i)}} \]
    is a unit eigenvector of $A^{(N)}$ with the same eigenvalue $\nu$.
    \item Let $\rho$ be the vector of the $\rho_i$, and $\sqrt{\rho}$ be its entrywise square root.
    Let
    \[ \widetilde{B} \colonequals (\sqrt{\rho}\sqrt{\rho}^{\top}) \circ \bar{A} = \diag(\rho)^{1/2}\bar{A}\,\diag(\rho)^{1/2}. \]
    Then,
    \[ \lim_{N \to \infty} \frac{\lambda_1(A^{(N)})}{N} = \lambda_1(\widetilde{B}). \]
    \end{enumerate}
\end{proposition}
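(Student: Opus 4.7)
The plan is to exploit the low-rank factorization of $A^{(N)}$ induced by its block structure. Define the assignment matrix $\Psi = \Psi^{(N)} \in \R^{N \times n}$ with $\Psi_{ik} = \bm{1}\{q(i) = k\}$, so that $A^{(N)} = \Psi \bar{A} \Psi^\top$ and $\Psi^\top \Psi = \diag(s)^2$ (since $\Psi^\top \Psi$ is diagonal with $i$-th entry $|S_i| = s_i^2$). With this factorization in hand, all three claims reduce to linear-algebraic manipulations.

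For Part 1, I would apply the standard fact that for any rectangular matrices $M_1, M_2$ of compatible sizes the products $M_1 M_2$ and $M_2 M_1$ share their non-zero eigenvalues (with multiplicity). Taking $M_1 = \Psi$ and $M_2 = \bar{A} \Psi^\top$ shows that the non-zero spectrum of $A^{(N)} = \Psi \bar{A}\Psi^\top$ agrees with that of $\bar{A}\Psi^\top \Psi = \bar{A}\diag(s)^2$. A second application (factoring the latter as $\diag(s) \cdot \bar{A}\diag(s)$ versus $\bar{A}\diag(s) \cdot \diag(s)$) shows the non-zero spectrum agrees with that of $\diag(s)\bar{A}\diag(s) = (ss^\top) \circ \bar{A} = B$.

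For Part 2, I would simply verify directly that $w$ defined in the statement is an eigenvector with eigenvalue $\nu$. The eigenvalue equation $Bv = \nu v$ reads $s_i \sum_j s_j \bar{A}_{ij} v_j = \nu v_i$, equivalently $\sum_j s_j \bar{A}_{ij} v_j = \nu v_i / s_i$. Then
\[
(A^{(N)} w)_i = \sum_j \bar{A}_{q(i)q(j)} \frac{v_{q(j)}}{s_{q(j)}} = \sum_{k=1}^n |S_k|\, \bar{A}_{q(i)k} \frac{v_k}{s_k} = \sum_{k=1}^n s_k \bar{A}_{q(i)k} v_k = \nu \frac{v_{q(i)}}{s_{q(i)}} = \nu w_i,
\]
applying the eigenvalue relation with index $q(i)$. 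Normalization follows from $\|w\|^2 = \sum_{k=1}^n |S_k|\, v_k^2 / s_k^2 = \sum_k v_k^2 = 1$.

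For Part 3, note that $s_i / \sqrt{N} = \sqrt{|S_i|/N} \to \sqrt{\rho_i}$ by the block-structure assumption, so $B / N = (s/\sqrt{N})(s/\sqrt{N})^\top \circ \bar{A} \to (\sqrt{\rho}\sqrt{\rho}^\top) \circ \bar{A} = \widetilde{B}$ entrywise, and hence in operator norm since these are fixed-size $n \times n$ matrices. Continuity of $\lambda_1$ gives $\lambda_1(B)/N \to \lambda_1(\widetilde{B})$. It remains to identify $\lambda_1(A^{(N)}) = \lambda_1(B)$: since $\bar{A}$ has non-negative entries so do $A^{(N)}$ and $B$, whence by Theorem~\ref{thm:perron} both top eigenvalues are non-negative, and Part 1 shows they agree (if one is zero the corresponding matrix is the zero matrix and the other is too). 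The main subtle step is the non-zero eigenvalue transfer argument; everything else is bookkeeping.
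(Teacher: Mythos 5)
Your proposal is correct and follows essentially the same route as the paper: both factor $A^{(N)}$ through the block indicator matrix ($\Psi$ in your notation, $R$ in the paper's), prove Part 1 via the fact that $M_1M_2$ and $M_2M_1$ share non-zero spectra, and prove Part 3 by entrywise convergence of the fixed-size matrix $B/N$ to $\widetilde B$. The only differences are cosmetic — you verify Part 2 by direct computation in the direction the proposition states rather than deriving it from the eigenvector equation for $A^{(N)}$, and you are slightly more careful than the paper in passing from equality of non-zero spectra to equality of top eigenvalues.
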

\begin{proof}
    Let $R \in \{0,1\}^{N \times n}$ denote the block indicator matrix, defined by
\[
R_{j,i} = \mathbf{1}\{q(j) = i\}, \quad \text{for } j \in [N] \text{ and } i \in [n].
\]
By construction we have $R^\top R = G \colonequals \operatorname{diag}(\left|S_1\right|,\ldots,\left|S_n\right|)$, and we may express the block structure of $A^{(N)}$ by writing
$A^{(N)} = R\bar{A} R^\top$. It follows from the construction that
$$B \colonequals (ss^\top) \circ \bar{A} = G^{1/2} \bar{A} G^{1/2}. $$

\noindent
\emph{Part 1: Equality of non-zero spectra.}
Recall that for any pair of matrices $X,Y$ with appropriate dimensions, the non-zero eigenvalues of $XY$ and $YX$ coincide. Applying this observation, we infer that $A^{(N)} = R \bar{A} R^\top$ has the same non-zero eigenvalues as  $\bar{A} R^\top R = \bar{A}G = \bar{A} G^{1/2} G^{1/2}$,
which in turn has the same non-zero eigenvalues as $G^{1/2} \bar{A} G^{1/2} = B$.
Hence, $A^{(N)}$ and $B$ share the same non-zero spectrum.

\vspace{1em}

\noindent
\emph{Part 2: Relation between eigenvectors.}
Let $w$ be a unit eigenvector of $A^{(N)}$ associated with a non-zero eigenvalue $\mu$, so that $A^{(N)} w = \mu w$. Since $A^{(N)} = R \bar{A} R^\top$, we obtain $R(\bar{A} R^\top w) = \mu w$,
which shows that $w$ must lie in the column space of $R$.
That is, there exists $x \in \mathbb{R}^n$ with $w = Rx$. Substituting, $R^\top R \bar{A} R^\top R x = \mu R^\top R x$, which simplifies to $G \bar{A} Gx = \mu Gx$.
Equivalently,
\[
G^{1/2} \bar{A} G^{1/2} (G^{1/2} x) = \mu (G^{1/2} x),
\]
which is $Bv = \mu v$, where $v \colonequals G^{1/2}x$.
Thus $v$ is an unit eigenvector of $B$ with eigenvalue $\mu$, and conversely, $w = Rx = R G^{-1/2} v$. In coordinates this reads
\[
w_i = \frac{v_{q(i)}}{c_{q(i)}} \quad \text{for all } i \in [N],
\]
as claimed.

\vspace{1em}

\noindent
\emph{Part 3: Asymptotic spectral limit.}
Recall that the block structure assumption implies that $|S_i|/N \to \rho_i$ as $N \to \infty$ for each block $i \in [n]$. Then, entrywise,
\[
\frac{1}{N} B^{(N)} \to \widetilde{B},
\]
where $\widetilde{B}$ is the deterministic matrix with entries $\widetilde{B}_{ij} = \sqrt{\rho_i} \sqrt{\rho_j} \bar{A}_{ij}$. 
Since these matrices have a constant dimension $n$ and converge entrywise, we equivalently have
\[
\lim_{N \to \infty} \left\| \frac{1}{N} B^{(N)} - \widetilde{B} \right\| = 0,
\]
which implies
\[
\lim_{N \to \infty} \frac{\lambda_1(B^{(N)})}{N} = \lambda_1(\widetilde{B}).
\]
By Part 1, the same conclusion holds for $A^{(N)}$, namely
\[
\lim_{N \to \infty} \frac{\lambda_1(A^{(N)})}{N} = \lambda_1(\widetilde{B}),
\]
completing the proof.
\end{proof}

\subsection{Combinatorics}
\label{sec:combinatorics}

\begin{definition}[Partitions]
    A \emph{partition} of a finite set $\sI$ is a set of sets $\pi = \{S_1, \dots, S_m\}$ where each $S_i \subseteq \sI$, each $S_i$ is non-empty, the $S_i$ are pairwise disjoint, and the disjoint union of the $S_i$ equals $\sI$.
    We call the $S_i$ the \emph{blocks} of $\pi$, and $\sI$ the \emph{base set}.
    We write $b_{\pi}: \sI \to \pi$ for the function that maps $x \in \sI$ to the (unique) block $S_i \in \pi$ such that $x \in S_i$.

    We write $\Part(\sI)$ for the set of all partitions of $\sI$, $\Part_m(\sI)$ for the set of all partitions into $m$ blocks, and $\EvenPart(\sI)$ and $\EvenPart_m(\sI)$ for the same notions but only including \emph{even partitions}, ones where $|S_i|$ is even for all $i \in [m]$.
    We write $\Match(\sI)$ for the partitions all of whose blocks have size exactly 2 (which can be viewed as perfect matchings of $\sI$).
\end{definition}
\noindent
We remark, since it will often come up in our calculations, that when $|\sI| = 2d$ then, by definition, $\Match(\sI) = \EvenPart_d(\sI)$.

\begin{definition}[Partial ordering of partitions] Two partitions $\rho, \pi$ on the same base set have the relation $\rho \preceq \pi$ if there exists a function $q: \pi \to \rho$ such that $S \subset q(S)$ for all $S \in \pi$.
In this case, we say that $\pi$ is \emph{finer than} or \emph{a refinement of} $\rho$, and conversely that $\rho$ is \emph{coarser than} or \emph{a coarsening of} $\pi$.
The relation $\preceq$ defines a partial ordering on the sets $\Part(\sI)$ and $\EvenPart(\sI)$.
\end{definition}

\begin{definition}[Stirling numbers]
    The \emph{Stirling numbers of the second kind} are
    \[ S_2(d, m) \colonequals |\Part_m([d])|. \]
\end{definition}

\begin{definition}[Touchard polynomials]
    \label{def:touchard}
    The \emph{Touchard polynomials} are the polynomials
    \[ T_d(x) \colonequals \sum_{m = 1}^d S_2(d, m) x^m. \]
\end{definition}

\noindent
As we will see below, the Touchard polynomials also have a probabilistic interpretation that will be useful to us.

\subsection{Probability}

To prove our lower bounds, we will use that certain random variables are asymptotically Gaussian by the central limit theorem.
The following multidimensional sufficient condition will be useful in some cases.

\begin{theorem}[Cram\'{e}r-Wold central limit theorem, Corollary 4.5 of \cite{Kallenberg-1997-FoundationsModernProbability}]
    \label{thm:cramer-wold}
    Let $X^{(k)} \in \R^n$ be a sequence of random vectors and let $X$ be another random vector.
    Then, the $X^{(k)}$ converge in distribution to $X$ as $k \to \infty$ if and only if $\langle c, X^{(k)}\rangle = \sum_{i = 1}^n c_i X^{(k)}_i$ converges in distribution to $\langle c, X \rangle = \sum_{i = 1}^n c_iX_i$ for all $c \in \R^n$.
    
    In particular, suppose that $\langle c, X^{(k)}\rangle$ converges in distribution to $\sN(0, \|c\|^2)$ for all $c \in \R^n$.
    Then, $X^{(k)}$ converges in distribution to $\sN(0, I_n)$.
\end{theorem}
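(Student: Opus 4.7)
The plan is to reduce both parts to the Lévy continuity theorem, exploiting the elementary identity
\[ \varphi_Y(t) \colonequals \E[\exp(i \langle t, Y\rangle)] = \varphi_{\langle t, Y\rangle}(1) \]
relating the characteristic function of an $\R^n$-valued vector $Y$ to that of its one-dimensional linear projections. This identity is what lets one-dimensional CLT-type hypotheses be bootstrapped to the joint distribution.

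For the forward direction of the equivalence, if $X^{(k)} \dto X$ then the continuous mapping theorem applied to the continuous functional $x \mapsto \langle c, x\rangle$ immediately gives $\langle c, X^{(k)}\rangle \dto \langle c, X\rangle$ for every $c \in \R^n$. For the reverse direction, assume $\langle c, X^{(k)}\rangle \dto \langle c, X\rangle$ for every $c$. Convergence in distribution of scalar random variables implies pointwise convergence of characteristic functions at every real argument, so in particular $\varphi_{\langle c, X^{(k)}\rangle}(1) \to \varphi_{\langle c, X\rangle}(1)$. By the identity above, this is exactly $\varphi_{X^{(k)}}(c) \to \varphi_X(c)$ pointwise in $c \in \R^n$. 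Since $\varphi_X$ is the characteristic function of the actual random vector $X$, it is continuous (in particular at the origin), so Lévy's continuity theorem in $\R^n$ applies and yields $X^{(k)} \dto X$.

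The second claim is an immediate specialization: for $X \sim \sN(0, I_n)$, the linear image $\langle c, X\rangle$ is Gaussian with mean $0$ and variance $c^\top I_n c = \|c\|^2$, so the assumption $\langle c, X^{(k)}\rangle \dto \sN(0, \|c\|^2)$ is the hypothesis of the first part with this particular choice of $X$, and we conclude $X^{(k)} \dto \sN(0, I_n)$.

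The only nontrivial ingredient is Lévy's continuity theorem itself, which would be the main obstacle were we proving everything from scratch but which is a standard textbook result. Notably, because the target $X$ is specified \emph{a priori}, we do not need to separately establish tightness of $\{X^{(k)}\}$ in order to extract a subsequential limit and then identify it; the candidate limit $\varphi_X$ is already a bona fide characteristic function, so Lévy's theorem applies directly.
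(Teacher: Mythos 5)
Your argument is correct and is the standard Cram\'{e}r--Wold proof via characteristic functions and L\'{e}vy's continuity theorem; the paper itself does not prove this statement but simply cites it from Kallenberg, where essentially this same argument is given. Your remark that tightness need not be established separately (because the candidate limit $\varphi_X$ is already a genuine characteristic function, continuous at the origin) is exactly the right point to make when invoking L\'{e}vy's theorem here.
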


To use these results, we will also want to give lower bounds on the moments of Gaussian quadratic forms.
The following result follows easily from Corollary~1 of \cite{bao2006expectation}.
\begin{proposition}
\label{prop:baopaper}
    Let $A \in \R^{n \times n}_{\sym}$ satisfy that $\Tr(A^k) \geq 0$ for all $k \geq 1$.
    For instance, we can either have $A \succeq 0$ or $A$ having only non-negative entries.
    Let $y \in \R^n$ be a random vector with $y_i \sim \sN(0, 1)$ i.i.d.
    Then, we have for all $d \geq 1$ that
    \begin{align*}
        \E[(y^{\top}Ay)^{d}] \geq 2^{d-1} (d-1)! \,\Tr(A^d).
    \end{align*}
\end{proposition}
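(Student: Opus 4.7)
The plan is to reduce to a sum of independent chi-squared variables via diagonalization, compute the cumulants of that sum exactly via the cumulant generating function, and then invoke the moment-cumulant formula to extract a single dominant non-negative term.

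First, I would diagonalize $A = U \Lambda U^{\top}$ with $\Lambda = \diag(\lambda_1, \dots, \lambda_n)$ and observe that $z \colonequals U^{\top} y$ is again a standard Gaussian vector, so that
\[ Q \colonequals y^{\top} A y = \sum_{i=1}^{n} \lambda_i z_i^2. \]
Since the $z_i^2$ are independent, the cumulant generating function of $Q$ factorizes, and from the standard identity $-\tfrac{1}{2}\log(1 - 2t\lambda_i) = \tfrac{1}{2} \sum_{k \geq 1} \frac{(2t\lambda_i)^k}{k}$ summed over $i$, one reads off the $k$-th cumulant
\[ \kappa_k(Q) = 2^{k-1} (k-1)! \, \Tr(A^k). \]

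Next, I would apply the moment-cumulant formula, which expresses the $d$-th moment as a sum over all set partitions of $[d]$:
\[ \E[Q^d] = \sum_{\pi \in \Part([d])} \prod_{B \in \pi} \kappa_{|B|}(Q). \]
By hypothesis $\Tr(A^k) \geq 0$ for all $k \geq 1$, so every $\kappa_k(Q) \geq 0$, and hence every term in the sum is non-negative. Discarding all partitions except the coarsest one, namely $\pi = \{[d]\}$ (a single block), yields
\[ \E[Q^d] \geq \kappa_d(Q) = 2^{d-1}(d-1)! \, \Tr(A^d), \]
which is the desired bound. Alternatively, one can appeal directly to Corollary~1 of \cite{bao2006expectation}, which states exactly this moment-cumulant expansion in a form tailored to Gaussian quadratic forms.

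Finally, I would quickly verify the sufficient conditions listed in the statement. If $A \succeq 0$, then $A^k \succeq 0$ for every $k \geq 1$, so all eigenvalues of $A^k$ are non-negative and $\Tr(A^k) \geq 0$. If instead $A$ has non-negative entries, then by induction on $k$ the product $A^k$ also has non-negative entries, so in particular its diagonal entries are non-negative and $\Tr(A^k) \geq 0$. The only substantive step is really the cumulant computation and the moment-cumulant identity; no genuine obstacle arises, since the ``hard'' part (the exact moment formula) is supplied by the cited reference, and the non-negativity of individual cumulants makes the lower bound by a single partition immediate.
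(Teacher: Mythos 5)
Your proposal is correct and takes essentially the same route as the paper, which simply cites Corollary~1 of \cite{bao2006expectation} for exactly the moment--cumulant expansion of a Gaussian quadratic form that you derive; your cumulant computation $\kappa_k(Q) = 2^{k-1}(k-1)!\,\Tr(A^k)$ and the observation that non-negativity of all cumulants lets you drop every partition except the single-block one are exactly what makes the result ``follow easily'' from that reference. The only difference is that you make the argument self-contained rather than leaving it to the citation.
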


Our calculations will involve various sums over partitions which will lead to evaluations of the Touchard polynomials, defined above.
We use the following probabilistic interpretation to derive sufficiently tight bounds on them.
\begin{proposition}
    For any $\lambda > 0$ and $d \geq 1$,
    \[ \Ex_{X \sim \Pois(\lambda)} X^d = T_d(\lambda). \]
\end{proposition}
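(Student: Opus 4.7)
The plan is to prove the identity by expanding $X^d$ in falling factorials and then using the fact that Poisson factorial moments have a particularly clean form. Write $(k)_m \colonequals k(k-1)\cdots(k-m+1)$ for the falling factorial.

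First, I would recall the classical combinatorial identity expressing monomials in terms of falling factorials,
\[
k^d = \sum_{m=1}^{d} S_2(d,m)\, (k)_m,
\]
which holds for every integer $k \geq 0$ and $d \geq 1$. The standard combinatorial proof: both sides count the number of functions $f:[d]\to[k]$, classified by the partition of $[d]$ whose blocks are the fibers of $f$. The left-hand side counts them directly; the right-hand side counts them by first choosing the fiber partition $\pi \in \Part_m([d])$ (giving $S_2(d,m)$ choices) and then injecting the $m$ blocks into $[k]$ (giving $(k)_m$ choices).

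Second, I would compute the factorial moments of the Poisson distribution. For $X \sim \Pois(\lambda)$ and $m \geq 1$, by definition
\[
\Ex\, (X)_m = \sum_{k=m}^{\infty} \frac{k!}{(k-m)!}\cdot \frac{\lambda^{k} e^{-\lambda}}{k!} = e^{-\lambda} \lambda^m \sum_{j=0}^{\infty} \frac{\lambda^{j}}{j!} = \lambda^{m},
\]
after reindexing $j = k - m$ and summing the exponential series.

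Third, I would combine the two ingredients by linearity of expectation:
\[
\Ex\, X^d = \sum_{m=1}^{d} S_2(d,m)\, \Ex\,(X)_m = \sum_{m=1}^{d} S_2(d,m)\, \lambda^{m} = T_d(\lambda),
\]
where the final equality is Definition~\ref{def:touchard}. Interchanging the finite sum with the expectation is trivially justified as the falling-factorial expansion has only $d$ terms, each of which has a finite expectation under $\Pois(\lambda)$. There is no real obstacle; the only conceptual content is the monomial-to-falling-factorial identity, and the only computation is the reindexing of the Poisson series, so the argument is essentially a two-line calculation once those pieces are in place.
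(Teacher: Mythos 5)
Your proof is correct and complete; the paper states this proposition without proof, treating it as the classical probabilistic interpretation of the Touchard polynomials, and your argument (monomial-to-falling-factorial expansion via Stirling numbers, plus the Poisson factorial moment identity $\Ex\,(X)_m = \lambda^m$) is exactly the standard derivation one would supply.
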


We will use the following bound, which in fact, as discussed in the reference, holds for all ``sub-Poissonian'' random variables (a notion similar to subgaussian and subexponential).
\begin{proposition}[\cite{ahle2022sharp}]
\label{prop:ahle}
    For all $d \geq 1$ and $\lambda > 0$, we have
    \[ \Ex_{X \sim \Pois(\lambda)} \left(\frac{X}{\lambda}\right)^d \leq \left(\frac{\frac{d}{\lambda}}{\log(1 + \frac{d}{\lambda})}\right)^d. \]
\end{proposition}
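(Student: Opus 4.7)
The plan is to use a Chernoff-style moment bound, optimized at a carefully chosen parameter rather than the unconstrained minimizer. The starting point will be the pointwise inequality
\[ x^d \leq \left( \frac{d}{et} \right)^d e^{tx} \qquad \text{for all } x \geq 0,\ t > 0,\ d \geq 1, \]
which, after the substitution $y = tx$, reduces to $y^d e^{-y} \leq d^d e^{-d}$, and this is just the statement that $y \mapsto y^d e^{-y}$ is maximized at $y = d$ with maximum value $d^d e^{-d}$.

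Taking expectations under $X \sim \Pois(\lambda)$ and invoking the Poisson moment generating function $\E[e^{tX}] = \exp(\lambda(e^t-1))$ will give, for every $t > 0$,
\[ \E[X^d] \leq \left( \frac{d}{et} \right)^d \exp\bigl(\lambda(e^t - 1)\bigr). \]
The key step will then be to choose $t$ so that the two exponential factors collapse. Setting $t \colonequals \log(1 + d/\lambda)$ gives $\lambda(e^t - 1) = d$, so $\exp(\lambda(e^t-1)) = e^d$ exactly cancels the hidden factor $e^{-d}$ inside $(d/(et))^d$, leaving $\E[X^d] \leq (d/\log(1+d/\lambda))^d$. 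Dividing through by $\lambda^d$ then yields the stated bound.

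The main subtlety lies in the choice of the pointwise inequality, not the optimization in $t$: the $t$ above is \emph{not} the unconstrained minimizer of the right-hand side (which would be the root of $te^t = d/\lambda$, expressible only via the Lambert $W$ function), but is instead selected to produce a closed form. The reason for starting with the sharp bound $x^d \leq (d/(et))^d e^{tx}$ rather than the coarser $x^d \leq (d!/t^d) e^{tx}$ (which is what an analytic Cauchy-integral bound on the Touchard polynomial coefficients would provide) is that the coarser version, evaluated at the same $t$, would leave behind a factor $d!\, e^d/d^d \sim \sqrt{2\pi d}$ by Stirling; avoiding this $\sqrt{d}$ loss is the only real obstacle between the easy argument and the sharp constant claimed by the proposition.
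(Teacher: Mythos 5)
Your proof is correct: the pointwise bound $x^d \leq (d/(et))^d e^{tx}$ is verified by maximizing $y^d e^{-y}$, the Poisson moment generating function gives $\E[e^{tX}] = \exp(\lambda(e^t-1))$, and the choice $t = \log(1+d/\lambda)$ makes $\lambda(e^t-1)=d$ so that the factors of $e^{\pm d}$ cancel exactly. The paper does not prove this proposition but only cites \cite{ahle2022sharp}, and your argument is the standard one establishing the stated bound, so it supplies a complete and correct self-contained proof.
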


\begin{corollary}
\label{cor:touchard-bound}
    For all $d \geq 1$ and $\lambda > 0$, we have
    \[ T_d(\lambda) = \Ex_{X \sim \Pois(\lambda)} X^d \leq \lambda^d \exp\left(\frac{d^2}{\lambda}\right). \]
\end{corollary}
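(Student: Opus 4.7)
The first equality $T_d(\lambda) = \E_{X \sim \Pois(\lambda)} X^d$ is exactly the probabilistic identity stated in the preceding proposition, so the plan reduces to proving the claimed upper bound. The strategy is to apply Proposition~\ref{prop:ahle} verbatim and then dispose of the awkward factor $(t/\log(1+t))^d$, with $t = d/\lambda$, by an elementary analytic bound.

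Concretely, I would proceed in two steps. First, multiply through Proposition~\ref{prop:ahle} by $\lambda^d$ to obtain
\[
\E_{X \sim \Pois(\lambda)} X^d \;\leq\; \lambda^d \left(\frac{d/\lambda}{\log(1+d/\lambda)}\right)^d.
\]
Second, setting $t \colonequals d/\lambda > 0$, it suffices to show
\[
\frac{t}{\log(1+t)} \;\leq\; e^t,
\]
because raising to the $d$-th power then gives exactly the factor $e^{dt} = e^{d^2/\lambda}$ that appears in the corollary.

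To prove this pointwise inequality I would use the elementary chain
\[
\frac{t}{\log(1+t)} \;\leq\; 1+t \;\leq\; e^t.
\]
The right inequality is the familiar tangent-line bound $e^t \geq 1+t$. The left inequality is a rearrangement of the standard logarithmic inequality $\log x \geq 1 - 1/x$ for $x > 0$ (equivalently, $\log(1+t) \geq t/(1+t)$ for $t > -1$), applied here with $x = 1+t$. Combining the two displays completes the proof.

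There is no real obstacle; the entire argument is a short chaining of Proposition~\ref{prop:ahle} with two textbook inequalities. The only care needed is to note that $\lambda, d > 0$ are assumed, so $t = d/\lambda$ is a positive real and the logarithms above are well-defined.
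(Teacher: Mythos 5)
Your proposal is correct and follows essentially the same route as the paper: both apply Proposition~\ref{prop:ahle} and reduce the claim to the scalar inequality $t/\log(1+t) \leq e^{t}$ for $t = d/\lambda > 0$. The only difference is in verifying that elementary inequality — you chain $t/\log(1+t) \leq 1+t \leq e^{t}$ via $\log(1+t) \geq t/(1+t)$, whereas the paper checks the sign of the derivative of $\log(1+x) - xe^{-x}$; your version is slightly more streamlined but the content is the same.
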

\begin{proof}
    From Proposition~\ref{prop:ahle}, to reach this claim it suffices to show the scalar inequality $x / \log(1 + x) \leq \exp(x)$ for all $x > 0$ (which we apply with $x = d / \lambda$).
    This result follows since the function $f(x) \colonequals \log(1 + x) - x\exp(-x)$ has $f(0) = 0$ and 
    \[ f^{\prime}(x) = \frac{1}{1 + x} + x\exp(-x) - \exp(-x) = \frac{\exp(x) - 1 + x^2}{(1 + x)\exp(x)} \geq 0 \]
    for all $x \geq 0$.
\end{proof}

Lastly, we will need some properties of subgaussian random variables, as well as a few variants thereof.
The following is the standard definition of subgaussianity.
\begin{definition}[Subgaussianity]
    We say that a centered random variable $X$ is \emph{$\sigma^2$-subgaussian} if $\E \exp(tX) \leq \exp(\frac{\sigma^2}{2}t^2)$ for all $t \in \R$.
    We say that a centered random vector $x$ is $\sigma^2$-subgaussian if $\langle x, v \rangle$ is $\sigma^2$-subgaussian for all $\|v\| = 1$.
\end{definition}
\noindent
For some of our applications, the following version will also be useful.
\begin{definition}[Moment subgaussianity]
    \label{def:moment-subgaussian}
    We say that a symmetric random variable $X$ is \emph{$\sigma^2$-moment-subgaussian} if, for all $k \geq 1$, $$\E X^{2k} \leq \sigma^{2k}(2k-1)!! = \Ex_{Z \sim \sN(0, \sigma^2)} Z^{2k}.$$
    We say that a symmetric random vector $x$ is $\sigma^2$-moment-subgaussian if $\langle x, v \rangle$ is $\sigma^2$-moment-subgaussian for all $\|v\| = 1$.
\end{definition}

These definitions are equivalent up to constants in $\sigma^2$; however, for some purposes we will need to be careful about those constants.
\begin{proposition}
    \label{prop:subgauss-moments}
    The following hold for a symmetric random variable $X$:
    \begin{enumerate}
        \item If $X$ is $\sigma^2$-subgaussian, then $X$ is $4\sigma^2$-moment-subgaussian.
        \item If $X$ is $\sigma^2$-moment-subgaussian, then $X$ is $\sigma^2$-subgaussian.
    \end{enumerate}
\end{proposition}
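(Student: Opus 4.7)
The two parts are independent and both reduce to elementary manipulations with power series and the Gaussian moment identity $(2k)! = 2^k k! (2k-1)!!$. I describe each in turn.

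For Part 2, the plan is to expand the moment generating function as a power series and apply the moment hypothesis termwise. Using symmetry, $\E X^{2k+1} = 0$ for all $k \geq 0$, so
\[
\E \exp(tX) = \sum_{k=0}^{\infty} \frac{t^{2k}}{(2k)!} \E X^{2k} \leq \sum_{k=0}^{\infty} \frac{t^{2k} \sigma^{2k} (2k-1)!!}{(2k)!} = \sum_{k=0}^{\infty} \frac{(\sigma^2 t^2 / 2)^k}{k!} = \exp\!\left(\tfrac{\sigma^2 t^2}{2}\right),
\]
where the middle equality uses $(2k-1)!!/(2k)! = 1/(2^k k!)$. This gives the desired subgaussianity and is essentially immediate once the moment bound is available.

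For Part 1, the plan is to pass through a tail bound. Subgaussianity together with Markov's inequality yields the standard two-sided tail bound $\P[|X| \geq t] \leq 2\exp(-t^2/(2\sigma^2))$ for all $t \geq 0$. Using the layer-cake representation and this tail bound,
\[
\E X^{2k} = \int_0^{\infty} 2k t^{2k-1} \P[|X| \geq t]\, dt \leq \int_0^{\infty} 4k t^{2k-1} \exp\!\left(-\tfrac{t^2}{2\sigma^2}\right) dt,
\]
and the substitution $s = t^2 / (2\sigma^2)$ evaluates the right side to $2(2\sigma^2)^k k!$. It then remains to verify the scalar inequality $2(2\sigma^2)^k k! \leq (4\sigma^2)^k (2k-1)!!$, which, after canceling $(2\sigma^2)^k$ and using $(2k-1)!! = (2k)!/(2^k k!)$, reduces to $2(k!)^2 \leq (2k)!$, i.e., $\binom{2k}{k} \geq 2$. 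This holds trivially for all $k \geq 1$, and the $k = 0$ case is an identity.

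Neither step presents a real obstacle; the only thing to watch is tracking constants carefully enough that the factor of $4$ in $4\sigma^2$ in Part 1 is not inflated, which is exactly why one wants the clean bound $\P[|X| \geq t] \leq 2\exp(-t^2/(2\sigma^2))$ rather than one of its weaker variants, and why one needs the combinatorial identity $(2k)! = 2^k k! (2k-1)!!$ in both halves.
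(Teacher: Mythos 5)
Your proposal is correct and follows essentially the same route as the paper: Part 2 is the identical termwise comparison of the MGF series against the Gaussian one, and Part 1 rests on the same intermediate bound $\E X^{2k} \leq 2(2\sigma^2)^k k!$, which the paper simply cites (Lemma~1.4 of \cite{Rigollet-2017-HighDimensionalStatistics}) rather than rederiving via the tail bound and layer-cake formula as you do, before the same final comparison with $(4\sigma^2)^k(2k-1)!!$. (One immaterial nitpick: your $k=0$ case of $\binom{2k}{k}\geq 2$ is false rather than an identity, but Definition~\ref{def:moment-subgaussian} only requires $k \geq 1$.)
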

\begin{proof}
    For the second implication, we may consider the moment generating function directly: for $G \sim \sN(0, \sigma^2)$,
    \begin{align*}
    \E \exp(tX) 
    &= \sum_{k = 0}^{\infty} \frac{t^k}{k!} \E X^k \\
    &= \sum_{k = 0}^{\infty} \frac{t^{2k}}{(2k)!} \E X^{2k} \tag{symmetry of $X$} \\ 
    &\leq \sum_{k = 0}^{\infty} \frac{t^{2k}}{(2k)!} \E G^{2k} \\
    &= \E \exp(tG) \\
    &= \exp\left(\frac{\sigma^2}{2}t^2\right).
    \end{align*}
    For the first implication, by Lemma~1.4 of \cite{Rigollet-2017-HighDimensionalStatistics}, we have
    \begin{align*}
        \E X^{2k}
        &\leq (2\sigma^2)^k \cdot 2 \cdot k! \\
        &\leq (4\sigma^2)^k \cdot k! \\
        &\leq (4\sigma^2)^k \cdot (2k - 1)!!,
    \end{align*}
    giving the result.
\end{proof}

\begin{proposition}
    \label{prop:subgauss-tau}
    If $X$ is $\sigma^2$-subgaussian and has $\E X = 0$ and $\E X^2 = 1$, then there exists a $\tau > 0$ such that, for all $k \geq 1$,
    \[ \E X^{2k} \leq \tau^{2k - 2} (2k - 1)!!. \]
\end{proposition}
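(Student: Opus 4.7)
The plan is to derive this proposition as a direct corollary of the already-proved Proposition~\ref{prop:subgauss-moments}. The first observation is that the $k = 1$ case is automatic: the desired inequality $\E X^2 \leq \tau^{0} \cdot 1$ holds with equality regardless of the choice of $\tau$, since by hypothesis $\E X^2 = 1$. So the entire task reduces to choosing $\tau$ large enough that the bound holds uniformly for $k \geq 2$.

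Next, I would apply Part~1 of Proposition~\ref{prop:subgauss-moments} to conclude that $X$ is $4\sigma^2$-moment-subgaussian, yielding
\[
    \E X^{2k} \leq (4\sigma^2)^k (2k - 1)!!
\]
for all $k \geq 1$. I would also record the standard fact that a $\sigma^2$-subgaussian random variable always satisfies $\Var(X) \leq \sigma^2$ (obtained by expanding both sides of $\E e^{tX} \leq e^{\sigma^2 t^2/2}$ to second order in $t$). Combined with $\E X^2 = 1$, this forces $\sigma^2 \geq 1$, and hence $4\sigma^2 \geq 4 > 1$.

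Finally, I would take $\tau \colonequals 4\sigma^2$. For any $k \geq 2$, the claim then reduces to verifying that
\[
    (4\sigma^2)^k \leq (4\sigma^2)^{2k - 2},
\]
which is immediate from $4\sigma^2 \geq 1$ together with $2k - 2 \geq k$ for $k \geq 2$. There is no real obstacle in this argument; the only mild subtlety worth highlighting is that the normalization $\E X^2 = 1$ pins the $k = 1$ inequality to equality for every admissible $\tau$, so any extra room in the moment bound for larger $k$ must be absorbed through the exponent $2k - 2$ rather than through the base $\tau$, and this is precisely what the condition $4\sigma^2 \geq 1$ guarantees.
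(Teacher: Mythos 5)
Your proof is correct and follows essentially the same route as the paper's: both reduce to Part~1 of Proposition~\ref{prop:subgauss-moments} and then absorb the factor $(4\sigma^2)^k$ into $\tau^{2k-2}$ for $k \geq 2$, with $k = 1$ settled by the normalization $\E X^2 = 1$. The only cosmetic difference is your choice of $\tau$: you use the standard fact $\Var(X) \leq \sigma^2$ to conclude $4\sigma^2 \geq 1$ and take $\tau = 4\sigma^2$, whereas the paper sidesteps that observation by taking $\tau^2 = \max\{1, (4\sigma^2)^3\}$.
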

\begin{proof}
    By Proposition~\ref{prop:subgauss-moments}, $X$ is $4\sigma^2$-moment-subgaussian.
    Take $\tau^2 = \max\{1, (4\sigma^2)^3\}$, so that $\E X^{2k} \leq \tau^{2k/3} (2k - 1)!!$.
    Since $2k / 3 \leq 2k - 2$ for all $k \geq 2$, the result follows.
\end{proof}

We will also use the following result about moment-subgaussian vectors.
\begin{proposition}
\label{prop:moment-bound}
    If a random vector $x \in \R^k$ is $1$-moment-subgaussian then, for any $\delta \in (0, 1)$ and $\gamma \in (0, \frac{1 - \delta}{2})$,
    \begin{align*}
    \E\left[\exp\left(\gamma \|x \|^2\right)\right] &\leq C(\delta,k) \Ex_{Z \sim \sN(0, 1)}\left[\exp\left(\frac{\gamma}{1-\delta} Z^2\right)\right] \\
    &=C(\delta,k) \frac{1}{\sqrt{1-\frac{2\gamma}{1-\delta}}},
    \end{align*}
    where $C(\delta, k)$ is a constant depending only on $\delta$ and $k$.
    In particular, we have
    \[ \E\left[\exp\left(\gamma \|x \|^2\right)\right] < \infty \]
    for all $\gamma < \frac{1}{2}$.
\end{proposition}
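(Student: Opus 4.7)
The plan is to bound $\E[\exp(\gamma \|x\|^2)]$ by a Gaussian decoupling identity that replaces $\|x\|^2$ with a linear functional of $x$, which can then be controlled by the 1-subgaussianity that moment-subgaussianity implies, and finally to rewrite the resulting $k$-dimensional Gaussian moment generating function in the one-dimensional form appearing in the statement.

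First I would observe that 1-moment-subgaussianity of $x$ upgrades to 1-subgaussianity in the sense of Definition~\ref{def:moment-subgaussian}: for each unit vector $v \in \R^k$, the scalar $\langle x, v \rangle$ is symmetric and 1-moment-subgaussian by hypothesis, hence 1-subgaussian by Proposition~\ref{prop:subgauss-moments}, so $\E_x \exp(t \langle x, v \rangle) \leq \exp(t^2/2)$ for every real $t$. Next, introduce an independent $G \sim \sN(0, I_k)$ and apply the coordinatewise Gaussian identity $\exp(\gamma y^2) = \E_{Z \sim \sN(0,1)}[\exp(\sqrt{2\gamma}\,yZ)]$ to write
\[
\exp(\gamma \|x\|^2) \;=\; \E_G\!\left[\exp\!\left(\sqrt{2\gamma}\,\langle x, G \rangle\right)\right].
\]
Taking expectation in $x$, non-negativity of the integrand justifies Tonelli, and conditioning on $G$ (and writing $\langle x, G\rangle = \|G\|\,\langle x, G/\|G\|\rangle$) together with the subgaussian bound gives $\E_x[\exp(\sqrt{2\gamma}\,\langle x, G\rangle) \mid G] \leq \exp(\gamma \|G\|^2)$. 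Therefore
\[
\E\!\left[\exp(\gamma \|x\|^2)\right] \;\leq\; \E_G\!\left[\exp(\gamma \|G\|^2)\right] \;=\; (1 - 2\gamma)^{-k/2},
\]
which is finite for every $\gamma < 1/2$ and already yields the ``in particular'' conclusion of the statement.

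The last step is to convert this $k$-dimensional Gaussian bound into the one-dimensional form appearing on the right-hand side of the statement. Define
\[
h(\gamma) \;\colonequals\; (1 - 2\gamma)^{-k/2}\,\sqrt{\,1 - \tfrac{2\gamma}{1-\delta}\,}, \qquad \gamma \in \bigl[0,\, \tfrac{1-\delta}{2}\bigr].
\]
Then $h$ is continuous on a compact interval with $h(0) = 1$ and $h((1-\delta)/2) = 0$, so $C(\delta, k) \colonequals \sup_{\gamma} h(\gamma)$ is finite and depends only on $\delta$ and $k$. Rearranging the bound $h(\gamma) \leq C(\delta, k)$ and combining with the previous display yields
\[
\E\!\left[\exp(\gamma \|x\|^2)\right] \;\leq\; C(\delta,k)\,\bigl(1 - \tfrac{2\gamma}{1-\delta}\bigr)^{-1/2} \;=\; C(\delta,k)\,\Ex_{Z \sim \sN(0,1)}\!\left[\exp\!\left(\tfrac{\gamma}{1-\delta}\,Z^2\right)\right],
\]
which is exactly the stated inequality. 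I do not foresee any serious obstacle: the creative step is the Gaussian decoupling identity, while the Tonelli exchange is automatic from non-negativity and the final calculus check on $h$ is elementary. If anything is mildly delicate it is just verifying the moment-to-MGF upgrade in the vector case, which is immediate from Proposition~\ref{prop:subgauss-moments} applied to the symmetric scalar projections $\langle x, v \rangle$.
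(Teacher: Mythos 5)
Your proof is correct, but it takes a genuinely different route from the paper's. The paper argues via a $\delta$-net $\mathcal{N}$ of the unit sphere: it bounds $\|x\| \leq \frac{1}{1-\delta}\max_{u \in \mathcal{N}}\langle u, x\rangle$, controls $\E[\|x\|^{2k}]$ by a union bound over the net combined with the moment-subgaussian hypothesis applied to each projection, and then sums the Taylor series of $\exp(\gamma\|x\|^2)$ term by term; the constant $C(\delta,k)$ there is the cardinality of the net and the $\delta$-dependence in the exponent arises organically from the net approximation. You instead upgrade moment-subgaussianity to the MGF bound via Proposition~\ref{prop:subgauss-moments}, linearize the quadratic with the Gaussian decoupling identity $\exp(\gamma\|x\|^2) = \E_G[\exp(\sqrt{2\gamma}\langle x, G\rangle)]$, integrate out $x$ first using subgaussianity, and land on the clean dimension-dependent bound $(1-2\gamma)^{-k/2}$, valid for every $\gamma < 1/2$. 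This makes the ``in particular'' finiteness claim immediate and sharper than what the net argument yields, and your $\delta$ enters only cosmetically at the end, through the elementary compactness argument for $h(\gamma) = (1-2\gamma)^{-k/2}\sqrt{1-\frac{2\gamma}{1-\delta}}$, which correctly uses that $\frac{1-\delta}{2} < \frac{1}{2}$ so $h$ is continuous and vanishes at the right endpoint. All the individual steps check out: the symmetry needed for Proposition~\ref{prop:subgauss-moments} is built into the definition of a moment-subgaussian vector, Tonelli is justified by non-negativity, and the restriction $\gamma < \frac{1-\delta}{2}$ guarantees both that $(1-2\gamma)^{-k/2}$ is finite and that the one-dimensional Gaussian MGF on the right-hand side converges.
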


\begin{proof}
Let $\mathcal{N} \subset \R^k$ be a $\delta$-net of the unit sphere of $\R^k$.
Standard bounds give
\[ \|x\| \leq \frac1{1-\delta}\max_{u \in \mathcal{N}} \langle u, \bm{x} \rangle. \]
Therefore, we may bound the moments by
\begin{align*}
    \E\left[\| x \|^{2k}\right] &\leq \left(\frac1{1-\delta}\right)^{2k} \E\left[\max_{u \in \mathcal{N}} \langle u, x \rangle^{2k}\right] \\
     &\leq \left(\frac1{1-\delta}\right)^{2k} \sum_{u \in \mathcal{N} } \E\left[ \langle u, x \rangle^{2k}\right] \\
     &\leq \left(\frac1{1-\delta}\right)^{2k} \sum_{u \in \mathcal{N} } \E[ Z^{2k}] \\
     &= C(\delta, k) \left(\frac1{1-\delta}\right)^{2k} \E[Z^{2k}]
\end{align*}
where $C(\delta,k) = |\mathcal{N}|$ is a constant depending only on $\delta, k$. 
The result then follows by expanding $\exp(\gamma\|x\|^2)$ in a Taylor series.
\end{proof}

Now we will state some definitions and propositions proposed in \cite{bandeira2021spectral} concerning a ``local'' version of subgaussianity, formalizing ideas appearing previously in \cite{PWBM-2018-PCAI}.
\begin{definition}
    A random vector $x \in \R^k$ is $\varepsilon$-locally $c$-subgaussian if, for any vector $v \in \R^k$ with $\|v \| \leq \varepsilon$, $$\E \exp\left(\langle v, x\rangle\right) \leq \exp\left(\frac{c}2\|v\|^2\right).$$
\end{definition}

\begin{proposition}
\label{prop:local-subgaussian}
Suppose $x$ is $\varepsilon$-locally $c$-subgaussian. For a (non-random) scalar $\alpha \neq 0$, $\alpha x$ is $\varepsilon/|\alpha|$-locally $c \alpha^2$-subgaussian. Also, the sum of $n$ independent copies of $x$ is $\varepsilon$-locally $cn$-subgaussian.
\end{proposition}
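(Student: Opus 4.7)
The plan is to verify each of the two claims directly from the definition of $\varepsilon$-locally $c$-subgaussianity, using elementary properties of the inner product together with independence.

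For the scaling statement, I would take any $v \in \R^k$ with $\|v\| \leq \varepsilon/|\alpha|$, and observe that by the bilinearity of the inner product, $\langle v, \alpha x\rangle = \langle \alpha v, x\rangle$. Since $\|\alpha v\| = |\alpha|\, \|v\| \leq \varepsilon$, the test vector $\alpha v$ lies in the region where the subgaussian bound on $x$ applies, so
\[
\E \exp\left(\langle v, \alpha x\rangle\right) = \E \exp\left(\langle \alpha v, x\rangle\right) \leq \exp\!\left(\frac{c}{2}\|\alpha v\|^2\right) = \exp\!\left(\frac{c\alpha^2}{2}\|v\|^2\right),
\]
which is exactly the $(\varepsilon/|\alpha|)$-locally $c\alpha^2$-subgaussian bound on $\alpha x$.

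For the sum statement, I would let $x_1, \dots, x_n$ be independent copies of $x$ and set $S \colonequals \sum_{i=1}^n x_i$. For any $v$ with $\|v\| \leq \varepsilon$, independence and linearity give
\[
\E \exp\left(\langle v, S\rangle\right) = \prod_{i=1}^n \E \exp\left(\langle v, x_i\rangle\right) \leq \prod_{i=1}^n \exp\!\left(\frac{c}{2}\|v\|^2\right) = \exp\!\left(\frac{cn}{2}\|v\|^2\right),
\]
which is the desired $\varepsilon$-locally $cn$-subgaussian bound on $S$.

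Both parts are one-line unfoldings of the definition, and no step is a real obstacle; the only thing to be careful about is matching the radius parameter correctly in the scaling case (noting that shrinking the admissible radius by $1/|\alpha|$ is exactly what allows us to apply the original bound on $x$ at the rescaled test vector $\alpha v$).
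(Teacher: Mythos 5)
Your proof is correct, and since the paper simply cites this proposition from Bandeira et al.\ without reproducing a proof, your direct unfolding of the definition (substituting the rescaled test vector $\alpha v$ for the scaling claim, and factoring the moment generating function over independent summands for the sum claim) is exactly the intended argument. No gaps.
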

\begin{proposition}
\label{prop:tail-bound}
    If $x \in \R^k$ is $\varepsilon$-locally $c$-subgaussian, then for any $\delta > 0$ and any $0 \leq t \leq  \varepsilon c/(1 - \delta)$, $$\P\left[\left\|x\right\| \geq t\right] \leq C(\delta, k) \exp\left(-\frac1{2c}(1-\delta)^2 t^2\right)$$ where $C(\delta, k)$ is a constant depending only on $\delta$ and $k$.
\end{proposition}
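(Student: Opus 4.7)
The plan is the standard \emph{net plus Chernoff} argument, modified to respect the fact that the moment generating function control only holds on a ball of radius $\varepsilon$. First, I would reduce the norm bound to a collection of one-dimensional tail bounds by fixing a $\delta$-net $\mathcal{N}$ of the Euclidean unit sphere $S^{k-1} \subset \R^k$, for which standard volume-covering bounds yield $|\mathcal{N}| \leq (3/\delta)^k =: C(\delta,k)$, a quantity depending only on $\delta$ and $k$. Then $\|x\| \leq \tfrac{1}{1-\delta}\max_{u \in \mathcal{N}}\langle u, x\rangle$ (the same inequality already invoked in the proof of Proposition~\ref{prop:moment-bound}), and a union bound reduces the task to controlling $\P[\langle u, x\rangle \geq (1-\delta)t]$ uniformly over $u \in \mathcal{N}$.

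Second, for each fixed unit vector $u \in \mathcal{N}$, I would apply Markov's inequality to $\exp(s\langle u, x\rangle)$ for some $s \in [0, \varepsilon]$ to be chosen. By the definition of $\varepsilon$-local $c$-subgaussianity applied to $v = su$ (which has norm at most $\varepsilon$), this gives a bound of the form $\exp\!\left(\tfrac{c}{2}s^2 - s(1-\delta)t\right)$. The unconstrained minimizer of this exponent in $s$ is $s^{*} = (1-\delta)t/c$, and the hypothesis $t \leq \varepsilon c/(1-\delta)$ is precisely what guarantees $s^{*} \leq \varepsilon$, so we may legitimately use this optimal $s$. Substituting yields $\P[\langle u, x\rangle \geq (1-\delta)t] \leq \exp\!\left(-\tfrac{(1-\delta)^2 t^2}{2c}\right)$, and combining this with the union bound over $\mathcal{N}$ produces the claim.

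The main subtlety---and the reason for the precise upper limit on $t$ in the hypothesis---is matching the feasibility constraint on the Markov parameter $s$ to the optimum of the Chernoff exponent. Without the assumption $t \leq \varepsilon c / (1-\delta)$, the optimum $s^{*}$ falls outside the range $[0, \varepsilon]$ on which the MGF bound is valid, and one would be forced to take $s = \varepsilon$, yielding only an exponent linear in $t$ rather than the Gaussian $t^2$ tail. Aside from this bookkeeping, the argument is entirely routine.
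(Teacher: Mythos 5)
Your argument is correct: the net bound $\|x\| \leq \tfrac{1}{1-\delta}\max_{u \in \mathcal{N}}\langle u, x\rangle$, the union bound over $|\mathcal{N}| \leq (3/\delta)^k$ points, and the Chernoff step with the optimal parameter $s^{*} = (1-\delta)t/c$ (whose feasibility $s^{*} \leq \varepsilon$ is exactly the hypothesis $t \leq \varepsilon c/(1-\delta)$) fit together exactly as you describe. The paper itself states this proposition without proof, citing prior work, and your proof is the standard one that the cited source uses; your closing remark correctly identifies why the range restriction on $t$ is where the ``local'' subgaussianity bites.
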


\begin{proposition}
\label{prop:expectation-bound}
    Let $\delta > 0$. If a random vector $x \in \R^k$ is $\varepsilon$-locally $c$-subgaussian with $c < (1-\delta)^2/2$ then $$\E\left[\bm{1}{\left\{\|x\|\leq \varepsilon c/(1-\delta)\right\}} \cdot \exp(\|x\|^2)\right]\leq 1 + \frac{C(\delta,k)}{(1-\delta)^2/2c - 1}$$
    where $C(\delta, k)$ is a constant depending only on $\delta$ and $k$.
\end{proposition}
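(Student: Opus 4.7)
My plan is to combine the layer-cake formula with the local subgaussian tail bound from Proposition~\ref{prop:tail-bound}. Set $M \colonequals \varepsilon c/(1-\delta)$, which is exactly the upper threshold appearing both in the indicator and in the range of validity of Proposition~\ref{prop:tail-bound}, and let $Z \colonequals \bm{1}\{\|x\| \leq M\} \exp(\|x\|^2)$. Then $Z$ takes values in $[0, e^{M^2}]$, so writing $\E[Z] = \int_0^{\infty} \P[Z > s]\, ds$ reduces the problem to estimating $\P[Z > s]$ on this bounded range.

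Next I would split the integral at $s = 1$. On $[0, 1]$ the trivial bound $\P[Z > s] \leq 1$ yields a contribution of at most $1$. On $(1, e^{M^2}]$, the event $\{Z > s\}$ forces both $\|x\| \leq M$ and $\|x\|^2 > \log s$, hence $\P[Z > s] \leq \P[\|x\| > \sqrt{\log s}]$. The key point is that the truncation at $M$ guarantees $\sqrt{\log s} \leq M = \varepsilon c/(1-\delta)$, which is precisely the regime in which Proposition~\ref{prop:tail-bound} can be applied with $t = \sqrt{\log s}$, giving
\[ \P[\|x\| > \sqrt{\log s}] \leq C(\delta, k)\, \exp\!\left(-\tfrac{(1-\delta)^2}{2c}\log s\right) = C(\delta, k)\, s^{-\alpha}, \]
where $\alpha \colonequals (1-\delta)^2/(2c)$. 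The hypothesis $c < (1-\delta)^2/2$ is exactly the statement $\alpha > 1$, which makes this polynomial tail integrable.

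Finally, integrating yields
\[ \int_1^{e^{M^2}} C(\delta,k)\, s^{-\alpha}\, ds \;\leq\; C(\delta,k) \int_1^{\infty} s^{-\alpha}\, ds \;=\; \frac{C(\delta,k)}{\alpha - 1} \;=\; \frac{C(\delta,k)}{(1-\delta)^2/(2c) - 1}, \]
and combining with the contribution of at most $1$ from $[0,1]$ gives the claimed bound.

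I do not anticipate any significant obstacle here. The only subtle point is matching the range of integration with the range of validity of the tail bound: the indicator $\bm{1}\{\|x\| \leq M\}$ is chosen precisely so that $\sqrt{\log s}$ never exceeds the threshold $\varepsilon c/(1-\delta)$ beyond which Proposition~\ref{prop:tail-bound} offers no guarantee. The hypothesis $c < (1-\delta)^2/2$ then plays the dual role of making the polynomial exponent $\alpha$ strictly greater than $1$, securing integrability.
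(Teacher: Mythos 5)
Your argument is correct. The paper itself does not prove this proposition (it is imported from the cited reference on local subgaussianity), and your layer-cake decomposition combined with Proposition~\ref{prop:tail-bound} is exactly the standard route: the truncation at $M = \varepsilon c/(1-\delta)$ keeps $t = \sqrt{\log s}$ inside the tail bound's range of validity, and the hypothesis $c < (1-\delta)^2/2$ makes the exponent $\alpha = (1-\delta)^2/(2c)$ exceed $1$ so that $\int_1^\infty s^{-\alpha}\,ds = 1/(\alpha - 1)$ gives precisely the claimed constant.
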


\subsection{Calculus}

The following is a standard series evaluation, a special case of Newton's generalized binomial theorem.
\begin{proposition}[Hypergeometric sum]
\label{prop:hypergeometric_sum}
For $|a| < 1$ and $c > 0$,
\[ \sum_{d=0}^\infty \frac{a^d}{d!} \frac{\Gamma(d + c)}{\Gamma(c)} = (1-a)^{-c}. \]
\end{proposition}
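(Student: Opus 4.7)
The plan is to recognize the sum as the Taylor expansion of $(1-a)^{-c}$ around $a = 0$, i.e., to verify the claimed identity by computing derivatives of the right-hand side directly. This is a textbook application of Newton's generalized binomial theorem, so the work is routine; the only task is to match the coefficients to the Gamma function expression on the left.

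Concretely, I would first observe that for $|a| < 1$ the function $f(a) \colonequals (1-a)^{-c}$ is real-analytic, so it agrees with its Taylor series about $0$. A straightforward induction shows
\[ f^{(d)}(a) = c(c+1)(c+2)\cdots(c+d-1)\,(1-a)^{-c-d}, \]
and hence $f^{(d)}(0) = c(c+1)\cdots(c+d-1)$. Using the basic identity
\[ c(c+1)\cdots(c+d-1) = \frac{\Gamma(c+d)}{\Gamma(c)}, \]
which follows from $\Gamma(x+1) = x\Gamma(x)$ by induction on $d$, the Taylor expansion becomes
\[ (1-a)^{-c} = \sum_{d=0}^{\infty} \frac{f^{(d)}(0)}{d!} a^d = \sum_{d=0}^{\infty} \frac{a^d}{d!} \frac{\Gamma(c+d)}{\Gamma(c)}, \]
as claimed. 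Convergence of the Taylor series on $|a| < 1$ is standard (the radius of convergence is determined by the singularity at $a = 1$).

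Since the statement is a well-known identity with a one-line derivation, there is no real obstacle. If one prefers to avoid invoking analyticity, one may alternatively verify the identity by writing the generalized binomial coefficient $\binom{-c}{d} = (-1)^d \frac{\Gamma(c+d)}{d!\,\Gamma(c)}$ and applying Newton's binomial theorem $(1-a)^{-c} = \sum_{d \geq 0} \binom{-c}{d}(-a)^d$ termwise; both routes are equivalent.
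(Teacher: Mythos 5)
Your proof is correct, and it matches the paper's treatment: the paper states this as a standard special case of Newton's generalized binomial theorem without proof, and your Taylor-expansion derivation (with the identity $c(c+1)\cdots(c+d-1) = \Gamma(c+d)/\Gamma(c)$) is exactly the standard argument being invoked.
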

\noindent
We will also need to bound the output of the above evaluation, for which we apply the following useful bound.
\begin{proposition}
\label{prop:log-bound}
For $a \in (0, 1)$ and $c > 0$,
    \[ \exp(ca) \leq (1 - a)^{-c} \leq \exp\left(\frac{1 - \frac{a}{2}}{1 - a} \cdot ca\right). \]
\end{proposition}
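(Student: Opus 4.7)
The plan is to take logarithms of all three quantities, reducing the inequality to a scalar claim not involving $c$, then to prove the scalar claim by elementary integration bounds.

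Specifically, since $(1-a)^{-c} > 0$ and all exponentials are positive, taking $\log$ throughout and dividing by $c > 0$ reduces the proposition to showing
\[ a \leq -\log(1-a) \leq \frac{a(1-a/2)}{1-a} \qquad \text{for all } a \in (0,1). \]
I will then write $-\log(1-a) = \int_0^a \frac{1}{1-t}\, dt$ and prove both bounds from this integral representation.

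For the lower bound, on the interval $t \in [0,a] \subset [0,1)$ we have $\frac{1}{1-t} \geq 1$, so integration immediately yields $-\log(1-a) \geq a$. For the upper bound, I decompose $\frac{1}{1-t} = 1 + \frac{t}{1-t}$, which gives
\[ -\log(1-a) = a + \int_0^a \frac{t}{1-t}\, dt. \]
Since $t \mapsto \frac{1}{1-t}$ is increasing on $[0,a]$, we have $\frac{t}{1-t} \leq \frac{t}{1-a}$ for $t \in [0,a]$, and integrating gives $\int_0^a \frac{t}{1-t}\, dt \leq \frac{a^2}{2(1-a)}$. Combining,
\[ -\log(1-a) \leq a + \frac{a^2}{2(1-a)} = \frac{a(1-a) + a^2/2}{1-a} = \frac{a(1-a/2)}{1-a}, \]
which is the desired bound. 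Exponentiating and raising to the $c$-th power completes the proof.

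There is essentially no obstacle here: the statement is a purely scalar calculus fact once the $c$ is peeled off by taking logarithms, and the only mildly delicate step is recognizing the useful decomposition $\frac{1}{1-t} = 1 + \frac{t}{1-t}$ that separates off the linear lower bound and gives a clean monotonicity estimate for the remainder. An alternative I would keep in reserve, in case a cleaner write-up is desired, is to verify directly that the function $f(a) \colonequals \frac{a(1-a/2)}{1-a} + \log(1-a)$ satisfies $f(0) = 0$ and $f'(a) = \frac{a^2}{2(1-a)^2} \geq 0$, which yields the upper bound by monotonicity.
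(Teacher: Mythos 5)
Your proof is correct and follows essentially the same route as the paper: both arguments reduce to the scalar inequality $-\log(1-a) \leq a + \frac{a^2}{2(1-a)}$ and establish it by peeling off the linear term and bounding the remainder by $\frac{a^2}{2(1-a)}$. The only cosmetic difference is that you bound the remainder via the integral representation $\int_0^a \frac{t}{1-t}\,dt \leq \frac{a^2}{2(1-a)}$, whereas the paper bounds the tail of the Taylor series $\sum_{p \geq 2} \frac{a^p}{p} \leq \frac{a^2}{2(1-a)}$; these are the same estimate in two guises.
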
 
\noindent
This result may be viewed as saying that the simple lower bound above is actually tight up to constant factors in the exponent, provided that $a$ is not too close to 1.
\begin{proof}
    The lower bound follows immediately from the bound $1 - a \leq \exp(-a)$.
    For the upper bound, we take the Taylor expansion of the logarithm,
    \begin{align*}
        (1 - a)^{-c} 
        &= \exp\left(-c\log\left(1-a\right)\right) \\
        &= \exp\left(c\sum_{p=1}^\infty \frac{a^p}{p}\right) \\
        &\leq \exp\left(c\left(a + a^2 \sum_{p=0}^\infty\frac{a^p}{2}\right) \right) \\
        &= \exp\left(c\left(a + \frac{1}{2} \cdot \frac{a^2}{1-a}\right) \right) \\
        &= \exp\left(ca\left(1 + \frac{1}{2} \cdot\frac{a}{1-a}\right) \right) \\
        &= \exp\left(ca \cdot \frac{1}{2} \cdot \frac{2 - a}{1-a}\right),
    \end{align*}
    giving the result.
\end{proof}

\subsection{Low-Degree Polynomial Algorithms}
\label{subsection:low-degree}

We introduce some tools, by now standard in the literature, for showing that low-degree polynomials cannot strongly separate sequences of probability measures.
See the surveys \cite{kunisky2019notes,Wein-2025-LowDegreeSurvey} for many more details.
The technique we will use revolves around the following quantities.

\begin{definition}[Low-degree $\chi^2$-divergence]
\label{def:low-deg-div}
Consider a pair of probability measures $\P, \Q$ over $\R^{N \times N}_{\sym}$ (say in the setting of Definition \ref{def:inhom-wigner}). The \emph{degree-$D$ $\chi^2$-divergence} is given by
\begin{align*}
    \chi^2_{\leq D}(\P \mid \Q) 
    &\colonequals \sup_{f \in \R[Y]_{\leq D} \setminus \{0\}} \frac{(\mathbb{E}_{\P}[f(Y)])^2}{\mathbb{E}_{\mathbb{Q}}[f(Y)^2]} - 1 \\
    &= \sup_{\substack{f \in \R[Y]_{\leq D} \setminus \{0\} \\ \mathbb{E}_{\Q} f(Y) = 0}}\frac{(\mathbb{E}_{\P}[f(Y)])^2}{\mathbb{E}_{\mathbb{Q}}[f(Y)^2]}.
\end{align*}
\end{definition}
\noindent
If we remove the restriction to low-degree polynomials, then this expression gives the classical $\chi^2$-divergence $\chi^2(\P \mid \Q)$ (provided that $\frac{d\P}{d\Q} \in L^2(\Q)$).
The kind of analysis we carry out is more commonly formulated in terms of the low-degree ``advantage'' $1 + \chi^2_{\leq D}(\P \mid \Q)$, but we follow the above notation (also appearing, for instance, in \cite{COGHWZ-2022-PhaseTransitionsGroupTesting}) to emphasize the analogy with the $\chi^2$ divergence, which appears in our statistical lower bounds.

The low-degree and classical $\chi^2$-divergences govern strong separation by low-degree polynomials and by arbitrary functions in the following way.
\begin{proposition}[Proposition 6.2 of \cite{BAHSWZ-2022-FranzParisiLowDegree}, Lemma 1.13 of \cite{kunisky2019notes}]
    \label{prop:chi-squared-testing}
    For a sequence of pairs of probability measures $\P_N, \Q_N$ over $\R^{N \times N}_{\sym}$, the following hold:
    \begin{enumerate}
        \item If $\chi^2_{\leq D(N)}(\P_N \mid \Q_N) = O(1)$ as $N \to \infty$, then there is no sequence of polynomials $f_N \in \R[Y]$ with $\deg(f_N) \leq D(N)$ achieving strong separation.
        \item If $\chi^2(\P_N \mid \Q_N) = O(1)$ as $N \to \infty$, then there is no sequence of (measurable) functions $f_N: \R^{N \times N}_{\sym} \to \R$ achieving strong separation, and no sequence of tests $t_N: \R^{N \times N}_{\sym} \to \{0, 1\}$ achieving strong detection.
    \end{enumerate}
\end{proposition}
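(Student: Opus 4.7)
The plan is to reduce all three claims to a single Cauchy--Schwarz estimate of the form $(\Ex_{\P_N} f)^2 \leq \mathrm{Div} \cdot \Ex_{\Q_N} f^2$ for a suitable $\chi^2$-type divergence $\mathrm{Div}$, and then compare this upper bound against the $\omega(1)$ lower bound on the same ratio that strong separation demands.

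The first step is a centering reduction. Given any candidate $f_N$, I would replace it by $f_N - \Ex_{\Q_N} f_N$; this has the same degree as $f_N$ (which matters for Part 1), the same mean gap $\Ex_{\P_N} f_N - \Ex_{\Q_N} f_N$, and the same variances under both measures. After centering, $\Var_{\Q_N} f_N = \Ex_{\Q_N} f_N^2$, and strong separation of $f_N$ becomes the requirement that $(\Ex_{\P_N} f_N)^2 / \Ex_{\Q_N} f_N^2 = \omega(1)$. If $f_N \notin L^2(\Q_N)$, strong separation fails vacuously because its right-hand side is infinite, so there is no loss in assuming $f_N \in L^2(\Q_N)$ throughout.

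For Part 1, since $\deg(f_N) \leq D(N)$ and $f_N$ is centered under $\Q_N$, the second form of the definition of $\chi^2_{\leq D(N)}$ yields $(\Ex_{\P_N} f_N)^2/\Ex_{\Q_N} f_N^2 \leq \chi^2_{\leq D(N)}(\P_N \mid \Q_N) = O(1)$, directly contradicting the $\omega(1)$ requirement. For the strong-separation portion of Part 2, the classical variational characterization
\[ \chi^2(\P_N \mid \Q_N) \;=\; \sup\left\{\frac{(\Ex_{\P_N} g)^2}{\Ex_{\Q_N} g^2} : g \in L^2(\Q_N),\ \Ex_{\Q_N} g = 0\right\} \]
---obtained by writing $\Ex_{\P_N} g = \Ex_{\Q_N}[(d\P_N/d\Q_N) \cdot g]$ and applying Cauchy--Schwarz in $L^2(\Q_N)$, using that $\chi^2(\P_N \mid \Q_N) < \infty$ forces $\P_N \ll \Q_N$---gives the analogous contradiction with $\chi^2$ in place of $\chi^2_{\leq D}$.

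For the strong-detection claim in Part 2, I would argue that any test $t_N$ achieving strong detection also achieves strong separation, after which the previous paragraph applies. Indeed, with $t_N$ valued in $\{0, 1\}$, strong detection gives $\Ex_{\P_N} t_N - \Ex_{\Q_N} t_N = \P_N[t_N = 1] - \Q_N[t_N = 1] = 1 - o(1)$, while $\Var_{\Q_N} t_N \leq \Q_N[t_N = 1] = o(1)$ and similarly $\Var_{\P_N} t_N \leq \P_N[t_N = 0] = o(1)$, so the mean gap dominates the sum of standard deviations. The whole argument is a routine two-line Cauchy--Schwarz calculation with no genuine obstacle; its content already appears in the cited Proposition~6.2 of \cite{BAHSWZ-2022-FranzParisiLowDegree} and Lemma~1.13 of \cite{kunisky2019notes}.
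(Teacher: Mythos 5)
Your proposal is correct and is essentially the same argument as in the paper's cited sources (the paper itself gives no proof, deferring to Lemma~1.13 of \cite{kunisky2019notes} and Proposition~6.2 of \cite{BAHSWZ-2022-FranzParisiLowDegree}): center under $\Q_N$, apply the variational/Cauchy--Schwarz characterization of the (low-degree) $\chi^2$-divergence, and reduce strong detection to strong separation via the variance bound for indicator tests. The only cosmetic point is that this paper's convention for strong detection puts $t_N = 0$ on the planted model, so the mean gap for $t_N$ is negative and you should apply the separation argument to $1 - t_N$; this changes nothing of substance.
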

\noindent
Importantly, neither converse statement is true in general, as discussed earlier in Section~\ref{sec:low-deg-div}.

\subsection{Tools for Additive Gaussian Models}
\label{sec:additive-gaussian}

It turns out that convenient and elegant closed forms for the low-degree and classical $\chi^2$-divergences are known for the special case of models with additive Gaussian noise.
Let us first state a general result for a general such model.

\begin{definition}[Additive Gaussian Noise Model]
Let $X \sim \sX = \sX_M$ for some probability measure $\sX$ over $\R^M$ and $Z \sim \sN(0, \Sigma)$ for some $\Sigma \in \R^{M \times M}_{\sym}$ with $\Sigma \succeq 0$.
The associated additive Gaussian model consists of the probability measures $\P = \P_M$ and $\Q = \Q_M$ given by:
\begin{itemize}
    \item To sample $Y \sim \P$, observe $Y = X + Z$.
    \item To sample $Y \sim \mathbb{Q}$, observe $Y = Z$.
\end{itemize}
\end{definition}

\begin{proposition}[Theorem 2.6 of \cite{kunisky2019notes}]
Suppose in the above setting that $\Sigma = I_M$.
Then,
\begin{align*}
    \chi^2(\P \mid \Q) &= \Ex_{X^1, X^2 \sim \sX}\exp\left(\left\langle X^1, X^2 \right\rangle \right) - 1, \\
    \chi^2_{\leq D}(\P \mid \Q) &= \Ex_{X^1, X^2 \sim \sX} \exp^{\leq D}\left(\left\langle X^1, X^2 \right\rangle\right) - 1,
    \end{align*}
    where we define
\[ \exp^{\leq D}(x) \colonequals \sum_{d=0}^D \frac{x^d}{d!} \]
    and $X^1, X^2$ are drawn independently from $\sX$.
\end{proposition}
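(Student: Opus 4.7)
The plan is to compute the likelihood ratio $L = d\P/d\Q$ in closed form and then evaluate the two divergences via (i) a direct Gaussian moment generating function computation for the classical case, and (ii) a Hermite polynomial expansion for the low-degree case.

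First I would write out the densities explicitly. Since $\Q = \sN(0, I_M)$ and $\P$ is the mixture $\int \sN(X, I_M)\, d\sX(X)$, the ratio of the Gaussian densities evaluates cleanly, giving
\[ L(Y) = \frac{d\P}{d\Q}(Y) = \Ex_{X \sim \sX}\left[\exp\left(\langle X, Y \rangle - \tfrac{1}{2}\|X\|^2\right)\right]. \]
For the classical $\chi^2$-divergence I would then write $\chi^2(\P \mid \Q) = \Ex_\Q[L(Y)^2] - 1$, introduce two independent copies $X^1, X^2 \sim \sX$ from the two factors of $L$, swap the $X$ and $Y$ expectations, and apply the Gaussian MGF identity $\Ex_{Y \sim \sN(0, I_M)} \exp(\langle v, Y\rangle) = \exp(\tfrac{1}{2}\|v\|^2)$ with $v = X^1 + X^2$. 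The cross term from expanding $\tfrac{1}{2}\|X^1 + X^2\|^2 - \tfrac{1}{2}\|X^1\|^2 - \tfrac{1}{2}\|X^2\|^2$ is exactly $\langle X^1, X^2\rangle$, giving the first claim.

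For the low-degree version I would use that the supremum in Definition~\ref{def:low-deg-div} is attained by $L^{\leq D}$, the $L^2(\Q)$-orthogonal projection of $L$ onto polynomials of degree at most $D$, so that $1 + \chi^2_{\leq D}(\P \mid \Q) = \|L^{\leq D}\|^2_\Q$. Since $\Q$ is standard Gaussian on $\R^M$, the normalized multivariate Hermite polynomials $\{H_\alpha\}_{\alpha \in \mathbb{N}^M}$ form an orthonormal basis of $L^2(\Q)$ graded by total degree $|\alpha|$, and the Hermite generating function gives the identity
\[ \exp\left(\langle X, Y\rangle - \tfrac{1}{2}\|X\|^2\right) = \sum_{\alpha \in \mathbb{N}^M} \frac{X^\alpha}{\sqrt{\alpha!}}\, H_\alpha(Y). \]
Taking the expectation over $X \sim \sX$ reads off the Hermite coefficients $\widehat{L}_\alpha = \Ex_X[X^\alpha]/\sqrt{\alpha!}$, so that $\|L^{\leq D}\|^2_\Q = \sum_{|\alpha| \leq D} \Ex_{X^1, X^2}[(X^1)^\alpha (X^2)^\alpha]/\alpha!$. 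I would then pull the expectation outside the sum, and collapse the inner multinomial sum using
\[ \sum_{|\alpha| = d} \frac{x^\alpha}{\alpha!} = \frac{(x_1 + \cdots + x_M)^d}{d!} \]
with $x_i = X^1_i X^2_i$, so that the sum over $|\alpha| \leq D$ becomes $\sum_{d=0}^D \langle X^1, X^2\rangle^d / d! = \exp^{\leq D}(\langle X^1, X^2\rangle)$, yielding the second claim.

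The main obstacle, if anything, is keeping the combinatorial bookkeeping of the multi-index Hermite expansion clean enough to make the final multinomial collapse transparent; the two identities one really needs (the scalar Hermite generating function and the multinomial theorem) factor across coordinates, so checking that the product structure of the Gaussian and of the monomial basis $X^\alpha = \prod_i X_i^{\alpha_i}$ are compatible is the only nontrivial step. All other manipulations are routine Gaussian calculus.
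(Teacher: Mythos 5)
Your proof is correct; the paper states this proposition as a citation (Theorem 2.6 of the low-degree survey) without reproducing a proof, and your argument---computing the likelihood ratio, using the Gaussian MGF for the classical divergence, and identifying the degree-$\leq D$ projection of $L$ via the multivariate Hermite generating function followed by the multinomial collapse---is exactly the standard proof given in that reference.
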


Since in the more general model with arbitrary covariance $\Sigma$ we may just multiply through by $\Sigma^{-1/2}$ to obtain a model with $\Sigma = I_M$, it is easy to derive the following similar identity.
\begin{corollary}
    \label{cor:add-gauss-cov}
    For a general $\Sigma \succ 0$, in the above setting,
    \begin{align*}
    \chi^2(\P \mid \Q) &= \Ex_{X^1, X^2 \sim \sX}\exp\left(X^{1^{\top}}\Sigma^{-1} X^2 \right) - 1, \\
    \chi^2_{\leq D}(\P \mid \Q) &= \Ex_{X^1, X^2 \sim \sX} \exp^{\leq D}\left(X^{1^{\top}}\Sigma^{-1} X^2\right) - 1.
    \end{align*}
\end{corollary}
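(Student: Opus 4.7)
The plan is exactly the reduction the authors sketch: push the problem to the isotropic case via a linear change of variables and invoke the preceding proposition. Let $\Sigma \succ 0$ and introduce the whitened observation $\widetilde{Y} \colonequals \Sigma^{-1/2} Y$, the whitened signal $\widetilde{X} \colonequals \Sigma^{-1/2} X$, and the whitened noise $\widetilde{Z} \colonequals \Sigma^{-1/2} Z$. Then $\widetilde{Z} \sim \sN(0, I_M)$, and if I write $\widetilde{\P}$ and $\widetilde{\Q}$ for the laws of $\widetilde{Y}$ under the planted and null models respectively, then $(\widetilde{\P}, \widetilde{\Q})$ is the additive Gaussian model with signal distribution $\widetilde{\sX}$ (the pushforward of $\sX$ under $\Sigma^{-1/2}$) and identity covariance.

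First I would handle the classical $\chi^2$-divergence. Because $\Sigma^{-1/2}$ is an invertible linear map, the chi-squared divergence is invariant under the change of variables $Y \mapsto \widetilde{Y}$, so $\chi^2(\P \mid \Q) = \chi^2(\widetilde{\P} \mid \widetilde{\Q})$. Applying the isotropic proposition to $(\widetilde{\P}, \widetilde{\Q})$ and using that for independent $X^1, X^2 \sim \sX$ the corresponding whitened pair satisfies $\langle \widetilde{X}^1, \widetilde{X}^2 \rangle = X^{1\top} \Sigma^{-1} X^2$, the first identity is immediate.

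For the low-degree version, the key observation is that the set of polynomials of degree at most $D$ in the entries of $Y$ coincides with the set of polynomials of degree at most $D$ in the entries of $\widetilde{Y}$, since the two coordinate systems are related by an invertible linear transformation $\widetilde{Y} = \Sigma^{-1/2} Y$ whose inverse $Y = \Sigma^{1/2}\widetilde{Y}$ is also linear, so composition with this transformation is a bijection on $\R[Y]_{\leq D}$. Consequently the variational definition of $\chi^2_{\leq D}$ yields $\chi^2_{\leq D}(\P \mid \Q) = \chi^2_{\leq D}(\widetilde{\P} \mid \widetilde{\Q})$, and again the isotropic proposition gives the stated formula after the same substitution $\langle \widetilde{X}^1, \widetilde{X}^2 \rangle = X^{1\top} \Sigma^{-1} X^2$.

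There is no substantive obstacle here; the only thing to verify carefully is the degree-preservation under $Y \leftrightarrow \widetilde{Y}$, which is a one-line check since both $\Sigma^{-1/2}$ and $\Sigma^{1/2}$ are fixed (non-random) invertible linear maps on $\R^M$. The argument is genuinely a corollary, requiring no new calculation beyond the substitution.
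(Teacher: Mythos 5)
Your proposal is correct and is exactly the argument the paper intends: the paper justifies the corollary with the one-sentence remark that one may "multiply through by $\Sigma^{-1/2}$" to reduce to the isotropic case, and your write-up simply fills in the two checks (invariance of $\chi^2$ under the invertible linear map, and preservation of the class $\R[Y]_{\leq D}$ under the same map) that make that reduction rigorous. No discrepancies.
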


We now apply these results to our matrix models.
Since we work over symmetric matrices, it suffices to consider the entries on and above the diagonal, of which there are $M = \frac{N(N + 1)}{2}$.
But, to work in the inhomogeneous setting, we must indeed use the extra flexibility of Corollary~\ref{cor:add-gauss-cov}.
Applying this, we obtain the following, which will be the starting point of our coming calculations.
\begin{lemma}
\label{lem:low-deg-prelim}
    Let $\P_N$ and $\Q_N$ be the planted and null distributions for an inhomogeneous spiked Wigner matrix model with signal distribution $\sX_N$ and variance profile $\Delta = \Delta^{(N)}$.
    Recall that we set $\Phi \colonequals \Delta^{\odot - 1}$.
    Then, we have
    \begin{align*}
        \chi^2(\P_N \mid \Q_N)
        &= \Ex_{X^1, X^2 \sim \sX_N} \exp\left(R(X^1, X^2)\right) - 1, \\
        \chi^2_{\leq D}(\P_N \mid \Q_N) &= \Ex_{X^1, X^2 \sim \sX_N} \exp^{\leq D}(R(X^1, X^2)) - 1,
    \end{align*}
    where $R(X^1, X^2) \in \R$ is the ``overlap'' random variable, which may be written equivalently as
    \begin{align*}
        R(X^1, X^2)
        &= \sum_{1 \leq i \leq j \leq N} \Phi_{ij} X^1_{ij} X^2_{ij} \\
        &= \frac{1}{2}\sum_{i, j = 1}^N (\Phi + \Diag(\Phi))_{ij} X^1_{ij} X^2_{ij}.
    \end{align*}
    Further, suppose we are in the case of any of our Theorems in Section~\ref{sec:results}, where there is a further probability measure $\sP_N$ on $\R^{N \times \kappa}$ such that $X \sim \sX_N$ can be drawn as $X = \frac{\beta}{\sqrt{N}}VV^{\top}$ for $V \sim \sP_N$.
    Let $V^1, V^2 \sim \sP_N$ be i.i.d.\ draws, and write $x^a_1, \dots, x^a_N \in \R^{\kappa}$ for the rows and $v^a_1, \dots, v^a_{\kappa} \in \R^N$ for the columns of $V^a$ for $a \in \{1, 2\}$.
    In this case, the overlap may be rewritten as
    \begin{align*}
        R(V^1, V^2)
        &\colonequals R\left(\frac{\beta}{\sqrt{N}}V^1V^{1^{\top}}, \frac{\beta}{\sqrt{N}} V^2V^{2^{\top}}\right) \\
        &= \frac{\beta^2}{2N} \sum_{i, j = 1}^N (\Phi + \Diag(\Phi))_{ij} \langle x^1_i, x^1_j \rangle \langle x^2_i, x^2_j \rangle \\
        &= \frac{\beta^2}{2N} \sum_{a, b = 1}^{\kappa} (v^1_a \odot v^2_b)^{\top} (\Phi + \Diag(\Phi)) (v^1_a \odot v^2_b).
    \end{align*}
\end{lemma}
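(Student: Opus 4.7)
The plan is to derive this lemma as a direct application of Corollary~\ref{cor:add-gauss-cov}, viewing the observation $Y$ in the inhomogeneous spiked Wigner model as a vector of independent Gaussian coordinates. Specifically, since the entries $(Y_{ij})_{(i,j) \in \fJ}$ are independent with $\Var(Y_{ij}) = \Delta_{ij}$, the noise covariance matrix $\Sigma$ on $\R^{\fJ}$ is diagonal with diagonal entries $\Delta_{ij}$, so $\Sigma^{-1}$ is diagonal with entries $\Phi_{ij} = 1/\Delta_{ij}$. Plugging into Corollary~\ref{cor:add-gauss-cov} gives
\[ X^{1\top} \Sigma^{-1} X^2 = \sum_{(i,j) \in \fJ} \Phi_{ij} X^1_{ij} X^2_{ij}. \]
Since $\Phi_{ij} = 0$ exactly when $(i,j) \notin \fJ$ (under the convention $1/\infty = 0$), we may freely extend the sum to all pairs $1 \le i \le j \le N$, which yields the first expression for $R(X^1, X^2)$.

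Next I would verify the second, symmetrized form of $R$ by converting the ``upper-triangular'' sum to a full sum over $(i,j) \in [N]^2$. Using that $\Phi$ and $X^a$ are symmetric,
\[ \sum_{i \le j} \Phi_{ij} X^1_{ij} X^2_{ij} = \tfrac{1}{2}\sum_{i \ne j} \Phi_{ij} X^1_{ij} X^2_{ij} + \sum_{i} \Phi_{ii} X^1_{ii} X^2_{ii}, \]
and the second identity then follows by noting that $\Diag(\Phi)$ contributes $\Phi_{ii}$ only on the diagonal, so $\tfrac{1}{2}(\Phi + \Diag(\Phi))_{ij} = \tfrac{1}{2}\Phi_{ij}$ for $i \ne j$ and equals $\Phi_{ii}$ for $i = j$.

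For the rank-$\kappa$ specialization, I would substitute $X^a_{ij} = \frac{\beta}{\sqrt N}(V^a V^{a\top})_{ij} = \frac{\beta}{\sqrt N}\langle x^a_i, x^a_j\rangle$ directly into the second form of $R$, pulling out the scalar factor $\beta^2/N$. For the final reformulation in terms of columns, I would expand the inner products as
\[ \langle x^1_i, x^1_j\rangle \langle x^2_i, x^2_j\rangle = \sum_{a,b=1}^{\kappa} V^1_{ia} V^1_{ja} V^2_{ib} V^2_{jb} = \sum_{a,b=1}^{\kappa} (v^1_a \odot v^2_b)_i\, (v^1_a \odot v^2_b)_j, \]
so that, writing $\Psi \colonequals \Phi + \Diag(\Phi)$, exchanging the order of summation gives
\[ \sum_{i,j=1}^N \Psi_{ij} \langle x^1_i, x^1_j\rangle \langle x^2_i, x^2_j\rangle = \sum_{a,b=1}^{\kappa} (v^1_a \odot v^2_b)^\top \Psi\, (v^1_a \odot v^2_b), \]
yielding the last equality.

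The whole argument is essentially bookkeeping: the only subtleties are (i) handling ``censored'' entries ($\Delta_{ij} = +\infty$) cleanly via the convention $1/\infty = 0$, which automatically drops those indices from $R$, and (ii) keeping track of the factor of $\tfrac{1}{2}$ and the diagonal correction when converting between the upper-triangular sum and the full double sum. Neither presents a real obstacle, so I do not anticipate a hard step; this lemma should be regarded as a ``change of variables'' providing the starting point for the main calculations in Section~\ref{sec:results}.
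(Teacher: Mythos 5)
Your proposal is correct and follows exactly the route the paper takes (the paper in fact states the lemma as an immediate application of Corollary~\ref{cor:add-gauss-cov} with the diagonal covariance $\Sigma = \diag(\Delta_{ij})_{(i,j)\in\fJ}$, leaving the same bookkeeping implicit). The handling of censored entries via $1/\infty = 0$, the factor-of-$\tfrac12$ plus diagonal correction, and the row/column rewritings of the rank-$\kappa$ overlap all check out.
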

\noindent
We note that, conveniently, this way of expressing the full and low-degree $\chi^2$-divergences captures the possibility of censored observations by setting certain entries of $\Phi$ to zero, letting us not need to explicitly keep track of the set $\fJ$ of entries with finite variance from now on.

The following basic facts about the truncated exponential polynomials $\exp^{\leq D}(x)$ will be useful (while simple and natural-looking, the second turns out to be surprisingly non-trivial to prove).
\begin{proposition}
    \label{prop:exp-mon}
    For all $D \geq 1$, $\exp^{\leq D}(x)$ is monotonically increasing over $x \geq 0$.
\end{proposition}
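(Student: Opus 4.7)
The plan is to observe that $\exp^{\leq D}(x)$ is, by definition, a polynomial whose coefficients $1/d!$ are all non-negative for $d = 0, 1, \ldots, D$. Since each monomial $x \mapsto x^d$ is monotonically non-decreasing on $[0, \infty)$, any non-negative linear combination of such monomials is also monotonically non-decreasing on that interval. This already suffices.

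Equivalently, and perhaps more in keeping with the notation of the paper, one may simply differentiate term by term to obtain
\[ \frac{d}{dx}\exp^{\leq D}(x) = \sum_{d=1}^{D} \frac{d \cdot x^{d-1}}{d!} = \sum_{d=1}^{D} \frac{x^{d-1}}{(d-1)!} = \exp^{\leq D-1}(x), \]
which, as a sum of non-negative monomials evaluated at $x \geq 0$, is manifestly $\geq 0$ on $[0, \infty)$. Integrating (or applying the mean value theorem) then yields monotonicity of $\exp^{\leq D}$ on that interval.

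There is no anticipated obstacle; the entire content of the statement is that a polynomial with non-negative coefficients is non-decreasing on the non-negative reals. The only reason the proposition is singled out is that it will be invoked repeatedly in the downstream low-degree $\chi^2$-divergence calculations, where one needs to replace a random ``overlap'' argument $R(X^1, X^2)$ inside $\exp^{\leq D}(\cdot)$ by a (deterministic or simpler random) upper bound; such substitutions are only valid precisely when the overlap is known to be non-negative, which is exactly what the hypothesis $x \geq 0$ of this proposition is designed to support.
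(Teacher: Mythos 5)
Your proof is correct; the paper states this proposition without proof precisely because the argument is the trivial one you give (a polynomial with non-negative coefficients is non-decreasing on $[0,\infty)$, or equivalently $\frac{d}{dx}\exp^{\leq D}(x) = \exp^{\leq D-1}(x) \geq 1 > 0$ there). Nothing further is needed.
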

\begin{proposition}[Proposition 5.3 of \cite{kunisky2025low}]
    \label{prop:exp-bounds}
    For all $D \geq 2$ even and $x \in \R$, we have
    \[ 0 < \exp^{\leq D}(x) \leq \exp^{\leq D}(|x|). \]
    Further, for all $x, y \in \R$, we have
    \[ \exp^{\leq D}(x) \exp(-100|y|) \leq \exp^{\leq D}(x + y) \leq \exp^{\leq D}(x) \exp(100|y|) \]
\end{proposition}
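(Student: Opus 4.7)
The proof proceeds in three parts, corresponding to the three displayed statements.

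The integral representation
\[ \exp^{\leq D}(z) = \frac{1}{D!}\int_0^\infty e^{-t}(t+z)^D\,dt, \]
verifiable by binomial expansion of $(t+z)^D$ and using $\int_0^\infty t^k e^{-t}\,dt = k!$, handles the first claim directly: for $D$ even, the integrand is non-negative for all $t \geq 0$ and not identically zero in $t$, so the integral is strictly positive, giving $\exp^{\leq D}(z) > 0$. The second claim $\exp^{\leq D}(x) \leq \exp^{\leq D}(|x|)$ is a term-by-term identity: for $x < 0$, the difference is $2\sum_{d \leq D,\, d \text{ odd}} |x|^d/d! \geq 0$, while for $x \geq 0$ the two sides coincide.

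For the exponential bounds, my plan is to reduce to a uniform Lipschitz bound on $g(z) \colonequals \log \exp^{\leq D}(z)$, well-defined by the positivity just established: both inequalities follow if $|g'(z)| \leq 100$ for all $z \in \R$ and all even $D \geq 2$, since by the fundamental theorem of calculus $|g(x+y) - g(x)| \leq 100|y|$. Computing
\[ g'(z) = \frac{\exp^{\leq D-1}(z)}{\exp^{\leq D}(z)} = 1 - \frac{z^D/D!}{\exp^{\leq D}(z)}, \]
the case $z \geq 0$ is immediate because every quantity in sight is non-negative and $\exp^{\leq D}(z) \geq z^D/D!$, forcing $g'(z) \in [0,1]$. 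For $z < 0$, writing $s = -z > 0$, the upper bound $g'(z) \leq 1$ persists by the same sign considerations, so the outstanding work reduces to showing $s^D/(D!\,\exp^{\leq D}(-s))$ is uniformly bounded (by $101$, say).

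Establishing this last uniform bound is the main obstacle. Substituting $t = u+s$ in the integral representation and using that $D$ is even yields
\[ \exp^{\leq D}(-s) = e^{-s}\left(1 + \frac{1}{D!}\int_0^s e^v v^D\,dv\right), \]
which reduces the task to controlling $e^s s^D/D!$ against $1 + \int_0^s e^v v^D\,dv/D!$. I would split into three regimes. For $s$ small (say $s \leq 1$), the bound $\exp^{\leq D}(-s) \geq e^{-s}$ and $s^D/D! \leq 1/D!$ give a trivial estimate. For $s$ large (say $s \gtrsim D$), the integrand $e^v v^D$ is monotonically increasing on $[0, s]$ and concentrates near $v = s$; an integration-by-parts estimate yields $\int_0^s e^v v^D\, dv \gtrsim e^s s^{D+1}/(s+D)$, so the ratio approaches $1$. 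The intermediate regime $s \in [1, O(D)]$ is delicate: one must track how the constant $D!$ and the partial integral balance against $e^s s^D$, exploiting the unimodality of $v \mapsto e^v v^D$ (peaked at $v = D$) and a Laplace-method or incomplete-gamma estimate to certify the uniform bound. Once this is assembled, the log-Lipschitz bound yields both exponential inequalities and the proof is complete.
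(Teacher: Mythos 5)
This proposition is imported verbatim from Proposition~5.3 of \cite{kunisky2025low}, so the present paper contains no proof to compare against; I am therefore judging your argument on its own. Your overall strategy is sound: the integral representation $\exp^{\leq D}(z) = \frac{1}{D!}\int_0^\infty e^{-t}(t+z)^D\,dt$ is correct and does give positivity for even $D$, the parity computation gives $\exp^{\leq D}(x)\le\exp^{\leq D}(|x|)$, and reducing the two-sided exponential bound to the log-Lipschitz estimate $|g'(z)|\le 100$ with $g'(z) = 1 - \frac{z^D/D!}{\exp^{\leq D}(z)}$ is a clean and valid route (the case $z\ge 0$ is indeed immediate).

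The one place where you stop short of a proof is exactly the step you flag as the main obstacle: the uniform bound on $s^D/(D!\,\exp^{\leq D}(-s))$ in the intermediate regime $s=\Theta(D)$, for which you only gesture at a Laplace-method or incomplete-gamma estimate. As written this is a gap, but the step is true and closes with a short monotonicity argument rather than any asymptotics. With $F(s)\colonequals D! + \int_0^s e^v v^D\,dv$ and $G(s)\colonequals e^s s^D$, your substitution gives $\exp^{\leq D}(-s) = e^{-s}F(s)/D!$, so the claim is $G\le 101\,F$ on $[0,\infty)$. Set $H\colonequals 101F - G$; then $H(0) = 101\,D! > 0$ and
\[ H'(s) = 101\,e^s s^D - e^s s^{D-1}(s+D) = e^s s^{D-1}(100s - D), \]
so $H$ is minimized at $s = D/100$, where, using $D!\ge (D/e)^D$,
\[ H(D/100) \;\ge\; 101\,D! - e^{D/100}(D/100)^D \;\ge\; (D/e)^D\Bigl(101 - \bigl(e^{1.01}/100\bigr)^{D}\Bigr) \;>\; 0 \]
since $e^{1.01} < 100$. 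Hence $g'(z)\ge 1-101 = -100$ for $z<0$, and integrating $g'$ yields both inequalities. With this insertion your argument is complete; without it, the crux of the third display remains unproved.
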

\noindent
While this result requires $D$ to be even, for all of our purposes it will be possible to assume this without loss of generality by just increasing or decreasing $D$ by one as needed and using that the low-degree $\chi^2$-divergence is monotone in $D$.
So, we do not explicitly verify this assumption when we apply the Proposition later.

Finally, the following is a useful general tool for controlling expectations of the form that we will be dealing with, which abstracts a style of analysis of such expressions first (to the best of our knowledge) developed by \cite{BMVVX-2018-InfoTheoretic,PWBM-2018-PCAI}.
\begin{lemma}[Lemma 2.10 of \cite{kunisky2024low}]
    \label{lem:overlap}
    Let $R_N \geq 0$ be a sequence of real bounded random variables and $D(N) \in \mathbb{N}$.
    Suppose there exists a sequence $A(N) \in \R_{\geq 0}$ such that the following conditions hold:
    \begin{enumerate}
    \item For all sufficiently large $N$,
        \[ A(N) \geq D(N) \left(2 \vee \log\left(\frac{\|R_N\|_{\infty}}{D(N)}\right)\right). \]
        (Recall that $\|R_N\|_{\infty}$ is the smallest $C > 0$ such that $R_N \leq C$ almost surely.)
    \item For some bounded $f: \R_{\geq 0} \to \R_{\geq 0}$ such that $\int_0^{\infty}f(t)\,dt < \infty$, for sufficiently large $N$, for all $t \in [0, A(N)]$,
        \[ \P[R_N \geq t] \leq f(t) \exp(-t). \]
    \end{enumerate}
    Then, we have
    \[ \limsup_{N \to \infty} \E \exp^{\leq D(N)}(R_N) < \infty. \]
\end{lemma}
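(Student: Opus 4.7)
The plan is to prove this via the layer-cake representation combined with the antiderivative identity $\tfrac{d}{dt}\exp^{\leq D}(t) = \exp^{\leq D-1}(t)$. Since $\exp^{\leq D}$ is monotone increasing on $\R_{\geq 0}$ (Proposition~\ref{prop:exp-mon}) and $R_N \geq 0$ is bounded, Fubini yields
\[
\E \exp^{\leq D}(R_N) = 1 + \int_0^{\|R_N\|_\infty} \exp^{\leq D-1}(t)\, \P[R_N \geq t]\, dt.
\]
I would split this integral at $t = A(N)$ into an \emph{inner} piece on $[0, A(N)]$, which is controlled by the pointwise tail bound of Condition~2, and an \emph{outer} piece on $(A(N), \|R_N\|_\infty]$, where only the uniform bound $\P[R_N \geq t] \leq \P[R_N \geq A(N)] \leq f(A(N)) e^{-A(N)}$ is available.

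For the inner piece, the crude bound $\exp^{\leq D-1}(t) \leq e^t$ combined with $\P[R_N \geq t] \leq f(t)e^{-t}$ cancels the exponentials, leaving
\[
\int_0^{A(N)} \exp^{\leq D-1}(t)\, \P[R_N \geq t]\, dt \;\leq\; \int_0^{A(N)} f(t)\, dt \;\leq\; \int_0^\infty f(t)\, dt \;<\; \infty,
\]
which is $O(1)$ by the integrability of $f$. This part is straightforward and is where Condition~2 on the tail of $R_N$ is used.

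For the outer piece, Condition~1 is essential. When $\|R_N\|_\infty > A(N)$, rearranging $A(N) \geq D(N) \log(\|R_N\|_\infty/D(N))$ gives the crucial bound $\|R_N\|_\infty \leq D(N)\, e^{A(N)/D(N)}$. On the range $t \geq A(N) \geq 2D(N)$ the polynomial $\exp^{\leq D-1}(t)$ is dominated by its top-order term: the ratio of consecutive monomials $t/d$ is at least $2$, so the terms decay geometrically backwards and $\exp^{\leq D-1}(t) \leq 2 \cdot t^{D-1}/(D-1)!$. Integrating and applying Stirling's approximation $D! \geq (D/e)^D\sqrt{2\pi D}$ to the bound
\[
\frac{2 f(A(N))\, e^{-A(N)}}{D(N)!}\bigl(\|R_N\|_\infty^{D(N)} - A(N)^{D(N)}\bigr)
\]
together with $\|R_N\|_\infty^{D(N)} \leq D(N)^{D(N)} e^{A(N)}$ shows the outer piece has the desired form.

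The main obstacle is matching the exponential growth of $\exp^{\leq D}$ against the effective exponential decay provided by the tail bound at the splitting point $A(N)$, while handling the boundary between the two regimes of $\exp^{\leq D}$ (exponential-like for $t \lesssim D$ and polynomial-like for $t \gtrsim D$). The quantitative content of Condition~1 is precisely calibrated so that the factor $\|R_N\|_\infty^{D(N)}/D(N)!$ arising from the polynomial bound is absorbed (up to tame factors) by the $e^{-A(N)}$ coming from $\P[R_N \geq A(N)]$. Condition~1's ``$2 \vee$'' clause handles the degenerate case $\|R_N\|_\infty \leq D(N) e^2$, where the polynomial bound on $\exp^{\leq D-1}$ is not yet sharp; there the integral reduces essentially to the inner piece since $A(N) \geq 2D(N)$ already dominates $\|R_N\|_\infty$ up to a constant, and one uses $\exp^{\leq D-1}(t) \leq e^t$ throughout. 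The $\limsup$ formulation of the conclusion, rather than a uniform constant bound, provides just enough slack to close the argument by absorbing the remaining lower-order factors.
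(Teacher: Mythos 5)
The paper does not prove this lemma --- it is imported as a black box from \cite{kunisky2024low} --- so your proposal has to stand on its own. Your overall strategy (layer cake, split at $A(N)$, cancel $\exp^{\leq D-1}(t) \leq e^t$ against $e^{-t}$ on the inner range) is the natural one, and the inner piece is handled correctly. The genuine gap is in the outer piece, at exactly the point where you assert that Stirling's approximation ``shows the outer piece has the desired form.'' Carry the computation out: writing $C = \|R_N\|_{\infty}$, $D = D(N)$, $A = A(N)$, $M = \sup f$, your bound is
\[
\frac{2 f(A) e^{-A}}{D!}\bigl(C^{D} - A^{D}\bigr) \;\leq\; \frac{2M e^{-A}}{D!}\, C^{D} \;\leq\; \frac{2M e^{-A}}{D!}\, D^{D} e^{A} \;=\; \frac{2M\, D^{D}}{D!} \;\asymp\; \frac{2M\, e^{D}}{\sqrt{2\pi D}},
\]
which diverges whenever $D(N) \to \infty$ (as it does in every application of the lemma in this paper). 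The $e^{A}$ gained from Condition~1 cancels the $e^{-A}$ from the tail bound but leaves the factor $e^{D}$ hidden in $1/D!$ entirely unabsorbed. The degenerate case $\|R_N\|_{\infty} \leq e^{2} D$ fares no better: bounding $\exp^{\leq D-1}(t) \leq e^{t}$ on $[A, C]$ there gives at best $M e^{C - A} \leq M e^{(e^{2}-2)D}$.

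This is not a slip you can repair by being more careful, because with the hypotheses exactly as transcribed here the conclusion appears to fail when $D(N) \to \infty$: take $R_N = C := D e^{2}$ with probability $q := e^{-2D}/(1+4D^{2})$ and $R_N = 0$ otherwise, with $A = 2D$ and $f(t) = 1/(1+t^{2})$. Condition~1 holds with equality, and Condition~2 holds because $f(t)e^{-t}$ is decreasing with value exactly $q$ at $t = A$; yet $\E \exp^{\leq D}(R_N) \geq q\, C^{D}/D! = D^{D}/\bigl((1+4D^{2})\, D!\bigr) \asymp e^{D} D^{-5/2} \to \infty$. What your argument actually needs from Condition~1 is an extra additive $D(N)$, i.e.\ $A(N) \geq D(N)\bigl(2 \vee \log(e\|R_N\|_{\infty}/D(N))\bigr)$ or equivalent; that is precisely what cancels the Stirling factor $D^{D}/D! \asymp e^{D}$ and makes your outer-piece bound $O(1)$. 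Before relying on the lemma in the form stated here, you (and the authors) should check the exact hypothesis in the cited source.
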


\section{Block-Structured Variance Profiles: Proof of Theorem \ref{theorem:comp-lower-bound-block-constant-variance}}

Recall that Theorem \ref{theorem:comp-lower-bound-block-constant-variance} concerns the inhomogeneous spiked Wigner matrix model (Definition~\ref{def:inhom-wigner}) where the variance profiles $\Delta = \Delta^{(N)}$ are a block-structured sequence of matrices.
Consequently, the Hadamard inverses $\Phi = \Phi^{(N)}$ are also block-structured.
The Theorem includes upper bounds on $\chi^2_{\leq D}(\P_N \mid \Q_N)$ (implying computational lower bounds of the form that polynomials of degree at most $D$ cannot achieve strong separation) and upper bounds on $\chi^2(\P_N \mid \Q_N)$ (implying statistical lower bounds of the form that arbitrary functions cannot achieve strong separation).
Also, in Theorem~\ref{thm:block-low-deg-div}, we give complementary lower bounds on $\chi^2_{\leq D}(\P_N \mid \Q_N)$, implying that the analysis in the first part is tight.
We consider these three results separately in the following three sections.

\subsection{Preliminaries on Block-Structured Models}

Before continuing, let us introduce some further notation to deal with the block structure of the model.
Recall that the block structure parameters are $(\bar{\Delta}, \rho_1, \dots, \rho_n)$ such that, for each $N$, there is a function $q: [N] \to [n]$ such that $\Delta^{(N)}_{ij} = \bar{\Delta}_{q(i)q(j)}$ for all $i, j \in [N]$.
Let us write $S_a = S_{a, N} \colonequals q^{-1}(a)$ for each $a \in [n]$, so that the $S_a$ form a partition of $[N]$.
Then, the block structure assumption asks that $|S_a| / N \to \rho_a \in (0, 1)$ for each $a \in [n]$.

We have previously defined $\bar{\Phi} \colonequals \bar{\Delta}^{\odot -1}$.
Let us also define
\[ \widetilde{\Gamma} \colonequals \diag(\rho)^{1/2} \, \bar{\Phi} \, \diag(\rho)^{1/2}, \]
so that our threshold parameter is
\[ \what{\mu}_1 = \lambda_1(\widetilde{\Gamma}). \]
We also define a matrix $\Gamma = \Gamma^{(N)} \in \R^{n \times n}$, given by
\[ \Gamma \colonequals \diag(|S_1|, \dots, |S_n|)^{1/2} \, \bar{\Phi} \, \diag(|S_1|, \dots, |S_n|)^{1/2}. \]
From the block structure assumption, since $|S_a| / N \to \rho_a$, one may verify that
\begin{align*}
    \lim_{N \to \infty} \frac{1}{N}\Gamma^{(N)} &= \widetilde{\Gamma}, \\
    \lim_{N \to \infty} \frac{1}{N}\lambda_1(\Gamma^{(N)}) &= \lambda_1(\widetilde{\Gamma}) \\ &= \what{\mu}_1,
\end{align*}
where the first convergence occurs entrywise (noting that the dimension $n$ of $\Gamma^{(N)}$ does not depend on $N$) and the second convergence follows from the first.

Finally, let us define the matrix $P \in \R^{n \times N}$ such that, for all $v \in \R^N$,
\[ (Pv)_a = \frac{1}{\sqrt{|S_a|}} \sum_{i \in S_a}v_i. \]
This matrix averages vectors over blocks, with a normalization \emph{a la} the central limit theorem (whose role will become clear in the proof).
This satisfies the identity
\[ v^{\top}\Phi v = (Pv)^{\top} \Gamma (Pv) \]
for all $v \in \R^N$.

\subsection{Computational Lower Bound}
\label{section:block-constant-comp}

\begin{proof}
Let us introduce a bit of notation for constants that will appear throughout.
Since the entries of $\Phi$ all appear in the finite matrix $\bar{\Phi}$, they are uniformly bounded:
\[ \|\Phi\|_{\ell^{\infty}} \leq \|\bar{\Phi}\|_{\ell^{\infty}} \equalscolon B. \]
Also, since the distribution $\nu$ of the rows of matrices drawn from $\sP_N$ is of bounded support, when $x \sim \nu$ we may assume that almost surely
\[ \|x\| \leq C. \]

Recall that our goal is to show that
\[ \chi^2_{\leq D}(\P_N \mid \Q_N) = \E \exp^{\leq D}(R) - 1 \stackrel{\text{(?)}}{=} O(1). \]
Let us begin by proving some preliminary bounds on the overlap.
We may decompose
\[ R(V^1, V^2) = R^{(0)}(V^1, V^2) + R^{(1)}(V^1, V^2), \]
where
\begin{align*}
    R^{(0)}(V^1, V^2)
    &\colonequals \frac{\beta^2}{2N} \sum_{a, b = 1}^{\kappa} (v^1_a \odot v^2_b)^{\top} \Phi (v^1_a \odot v^2_b) \\
    &= \frac{\beta^2}{2N} \sum_{a, b = 1}^{\kappa} (P(v^1_a \odot v^2_b))^{\top} \Gamma (P(v^1_a \odot v^2_b))
\end{align*}
and
\begin{align*}
    R^{(1)}(V^1, V^2)
    &\colonequals \frac{\beta^2}{2N} \sum_{a, b = 1}^{\kappa} (v^1_a \odot v^2_b)^{\top} \Diag(\Phi) (v^1_a \odot v^2_b).
\end{align*}
We drop the arguments $(V^1, V^2)$ from $R, R^{(0)}$, and $R^{(1)}$ below for the sake of clarity.
It will turn out that $R^{(0)}$ is the ``main term'' of $R$, while $R^{(1)}$ makes a negligible contribution.

Using Propositions~\ref{prop:exp-mon} and \ref{prop:exp-bounds}, we may bound
\begin{align*}
    \chi^2_{\leq D}(\P_N \mid \Q_N)
    &\leq \E \exp^{\leq D}(R) \\
    &\leq \E \exp^{\leq D}(|R|) \\
    &\leq \E \exp^{\leq D}(|R^{(0)}| + |R^{(1)}|).
\end{align*}
Let us show first that we may effectively get rid of the $|R^{(1)}|$ term here.
We have, by our boundedness assumption on $x \sim \nu$, that we have almost surely
\begin{align*}
    |R^{(1)}|
    &\leq \frac{\beta^2}{2N} \|\Phi\|_{\ell^{\infty}} \sum_{a, b = 1}^{\kappa} \|v_a^1 \odot v_b^2\|^2 \\
    &= \frac{\beta^2}{2N} \|\Phi\|_{\ell^{\infty}} \sum_{a, b = 1}^{\kappa} \sum_{i = 1}^N (v_a^1)_i^2 (v_b^2)_i^2 \\
    &= \frac{\beta^2}{2N} \|\Phi\|_{\ell^{\infty}} \sum_{i = 1}^N \|x_i^1\|^2 \|x_i^2\|^2 \\
    &\leq \frac{\beta^2 B C^4}{2},
\end{align*}
which is just a constant.
Using Proposition~\ref{prop:exp-bounds}, we may then bound
\[ \chi^2_{\leq D}(\P_N \mid \Q_N) \leq \exp(50 \beta^2 B C^4) \cdot \E \exp^{\leq D}(|R^{(0)}|), \]
and thus we have reduced our task to just showing that
\[ \E \exp^{\leq D}(|R^{(0)}|) \stackrel{\text{(?)}}{=} O(1). \]

We now work on bounding $R^{(0)}$.
Since $\Gamma$ has non-negative entries whereby, by the Perron-Frobenius theorem (Theorem~\ref{thm:perron}) we have $\lambda_1(\Gamma) = \|\Gamma\|$, we also have
\begin{align*}
    |R^{(0)}|
    &\leq \frac{\beta^2 \lambda_1(\Gamma)}{2N} \sum_{a, b = 1}^{\kappa} \|P(v^1_a \odot v^2_b)\|^2 \\
    &= \frac{\beta^2 \lambda_1(\Gamma)}{2N} \sum_{a, b = 1}^{\kappa} \sum_{h = 1}^n \left(\frac{1}{\sqrt{|S_h|}} \sum_{i \in S_h} (v^1_a)_i (v^2_b)_i\right)^2 \\
    &= \frac{\beta^2 \lambda_1(\Gamma)}{2N} \sum_{h = 1}^n \frac{1}{|S_h|} \sum_{a, b = 1}^{\kappa} \left(\sum_{i \in S_h} (v^1_a)_i (v^2_b)_i\right)^2,
\end{align*}
where we note that the factor of $\lambda_1(\Gamma) / N$ on the outside will, in the limit $N \to \infty$, lead to the factor of $\what{\mu}_1$ that we expect.
Now, let us define the matrix $T^{(h)} \in \R^{\kappa \times \kappa}$ for each $h \in [n]$ whose entries are the ``partial inner products'' appearing here:
\[ T^{(h)}_{ab} \colonequals \sum_{i \in S_h} (v^1_a)_i (v^2_b)_i. \]
Then, we may rewrite the above bound, also separating the leading factors per the above observation, as
\begin{equation}
    \label{eq:R0-main-bound}
    |R^{(0)}| \leq \beta^2 \cdot \frac{\lambda_1(\Gamma)}{N} \cdot \frac{1}{2} \sum_{h = 1}^n \frac{\|T^{(h)}\|_F^2}{|S_h|}.
\end{equation}

We would like to apply Lemma~\ref{lem:overlap} to $|R^{(0)}|$.
Recall that the Lemma asks for two conditions: a weak but almost sure bound on this random variable, and a bound on the probability of certain large deviations.
For the former, note that we can write
\[ T^{(h)} = U^{(h, 1)^{\top}} U^{(h, 2)}, \]
where $U^{(h, \ell)} \in \R^{S_h \times \kappa}$ has entries
\[ U^{(h, \ell)}_{i a} = (v^{\ell}_a)_i. \]
Equivalently, the rows of $U^{(h, \ell)}$ are those $x^{\ell}_i$ for $i \in S_h$.
Thus, $\|U^{(h, \ell)}\|_F^2 \leq |S_h| \cdot C^2$, since each of these rows has norm at most $C$ by our assumption on the boundedness of $\nu$.
Now, we have
\[ \|T^{(h)}\|_F^2 \leq \|U^{(h, 1)}\|_F^2 \|U^{(h, 2)}\|_F^2 \leq C^4 |S_h|^2, \]
and therefore, almost surely,
\begin{align*} 
|R^{(0)}| 
&\leq \beta^2 \cdot \frac{\lambda_1(\Gamma)}{N} \cdot \frac{1}{2} \sum_{h = 1}^n \frac{\|T^{(h)}\|_F^2}{|S_h|} \\
&\leq \beta^2 C^4 \cdot \frac{\lambda_1(\Gamma)}{N} \cdot \frac{1}{2} \sum_{h = 1}^n |S_h| \\
&= \frac{1}{2}\beta^2 C^4 \cdot \lambda_1(\Gamma)
\intertext{and finally, a simple bound gives that, since $|S_i| \leq N$, we have $\lambda_1(\Gamma) \leq \lambda_1(\bar{\Phi}) N = \what{\mu}_1 N$, so}
&\leq \frac{1}{2}\beta^2 C^4 \what{\mu}_1 N
\intertext{The details of the constants here will be inconsequential, so let us just write}
&\equalscolon C^{\prime}N.
\end{align*}
Since we plan to take $D(N) \leq N / \log(N)$ (indeed, even smaller, but at least satisfying this inequality for sufficiently large $N$), the condition of Lemma~\ref{lem:overlap} requires that we take the value $A(N)$ to be at least $D(N) \log\log N \leq N \cdot \frac{\log \log N}{\log N}$, for sufficiently large $N$.
In particular, it is permissible to take $A(N) \colonequals \delta N$ for a small constant $\delta > 0$ to be chosen later.

Recall that, to apply the Lemma, we then need to bound the tail probability $\P[|R^{(0)}| \geq t]$ for all $t \in [0, \delta N]$.
For the Lemma to apply, we need a tail bound on this probability that is ``slightly better'' than $\exp(-t)$.
We first take some initial steps towards showing this.
Starting from \eqref{eq:R0-main-bound}, which we repeat below, we have
\begin{align*}
    |R^{(0)}| 
    &\leq \beta^2 \cdot \frac{\lambda_1(\Gamma)}{N} \cdot \frac{1}{2} \sum_{h = 1}^n \frac{\|T^{(h)}\|_F^2}{|S_h|}
    \intertext{Now, as we noted earlier, as $N \to \infty$ we have $\lambda_1(\Gamma) / N \to \what{\mu}_1$, and we have also assumed that $\beta^2 < 1 / \what{\mu}_1$. Thus, there exists an $\epsilon > 0$ such that, for all sufficiently large $N$, we have}
    &\leq (1 - \epsilon) \cdot \frac{1}{2} \sum_{h = 1}^n \frac{\|T^{(h)}\|_F^2}{|S_h|}. \label{eq:R0-bound-2} \numberthis
\end{align*}
Since $T^{(h)}$ involves only the rows of $V^1$ and $V^2$ indexed by $S_h$, the different $S_h$ are disjoint (forming a partition of $[N]$), and the rows of $V^1$ and $V^2$ are all i.i.d.\ by assumption, we see that the $T^{(h)}$ are independent random matrices.

As a heuristic aside, by the central limit theorem we expect that each $\|T^{(h)}\|_F^2 / |S_h|$ has roughly the distribution $\chi^2(\kappa^2)$.
Thus, the entire sum above should have roughly the distribution $\chi^2(\kappa^2 n)$.
If this were indeed exactly the case, then the tail bound we want would follow by (a simple variant of) Bernstein's inequality, the tails of this distribution looking, for large enough deviations, like those of $\chi^2(1)$, which precisely decay as $\exp(-t)$.

Now let us actually implement this reasoning.
By our previous observation, we may rewrite
\[ T^{(h)} = \sum_{i \in S_h} x_i^1 x_i^{2^{\top}}, \]
or, upon vectorizing this matrix,
\[ \vec(T^{(h)}) = \sum_{i \in S_h} x_i^1 \otimes x_i^{2}. \]
The summands here are i.i.d.\ random vectors.
Let us write $\nu^{(2)}$ for their law, which is the law of $x \otimes x^{\prime}$ with $x, x^{\prime} \sim \nu$ drawn independently.
Consider the moment generating function of such a random vector, $M: \R^{\kappa^2} \to \R$ given by
\[ M(\xi) \colonequals \Ex_{x, x^{\prime} \sim \nu} \exp(\langle \xi, x \otimes x^{\prime}). \]
Since $\nu$ has bounded support, this expectation is always finite.
By the generating function expansion of the moment generating function, the gradient and Hessian of $M$ at zero are
\begin{align*}
    \nabla M(0) &= \Ex_{x, x^{\prime} \sim \nu} x \otimes x^{\prime} \\ &= 0, \\
    \nabla^2 M(0) &= \Ex_{x, x^{\prime} \sim \nu} (x \otimes x^{\prime})(x \otimes x^{\prime})^{\top} \\
    &= \E (xx^{\top} \otimes x^{\prime}x^{\prime^{\top}}) \\
    &= (\Covx_{x \sim \nu}[x])^{\otimes 2} \\
    &\preceq I_{\kappa^2},
\end{align*}
the final observation following by our assumption on the covariance of $\nu$.
Taking a Taylor expansion, it then follows that, for all $\eta > 0$, there exists $\gamma > 0$ such that
\[ M(\xi) \leq \exp\left(\frac{1}{2}(1 + \eta) \|\xi\|^2\right) \text{ for all } \|\xi\| \leq \gamma. \]
Thus, for any choice of these parameters, each summand appearing in $\vec(T^{(h)})$ is $\gamma$-locally $(1 + \eta)$-subgaussian.
Accordingly, since these summands are independent, $\vec(T^{(h)})$ itself is $\gamma$-locally $(1 + \eta)|S_h|$-subgaussian, and $\vec(T^{(h)}) / \sqrt{|S_h|}$ is $\gamma\sqrt{|S_h|}$-locally $(1 + \eta)$-subgaussian.
Finally, define a vector $w \in \kappa^2 n$ to be the concatenation of the $\vec(T^{(h)}) / \sqrt{|S_h|}$ over $h = 1, \dots, n$.
Then, $w$ is $(\gamma \min_{h \in [n]} \sqrt{|S_h|})$-locally $(1 + \eta)$-subgaussian, and we have
\[ \|w\|^2 = \sum_{h = 1}^n \frac{\|T^{(h)}\|_F^2}{|S_h|}, \]
precisely the expression from our bound on $|R^{(0)}|$.
Since $|S_h| / N \to \rho_h > 0$ for all $h \in [n]$, for sufficiently large $N$ we will have $|S_h| \geq \frac{1}{2}\rho_{\min} N$ where $\rho_{\min} \colonequals \min_{h \in [n]} \rho_h$.
Thus, we find that, for $N$ sufficiently large, $w$ is $(\gamma\sqrt{\frac{1}{2}\rho_{\min}N})$-locally $(1 + \eta)$-subgaussian.

Now, we have
\[ |R^{(0)}| \leq (1 - \epsilon) \cdot \frac{1}{2} \|w\|^2. \]
By Proposition~\ref{prop:tail-bound}, taking $\delta = \eta$, we have that, for a constant $C^{\prime\prime} = C^{\prime\prime}(\eta)$, for all $0 \leq t \leq \gamma\, \frac{1 + \eta}{1 - \eta} \sqrt{\frac{1}{2}\rho_{\min}N} $
\[ \P[\|w\| \geq t] \leq C^{\prime\prime} \exp\left(-\frac{1 - \eta}{2(1 + \eta)}t^2\right) \]
Thus we find a tail bound on $R^{(0)}$:
\begin{align*}
    \P[|R^{(0)}| \geq t]
    &\leq \P\left[\|w\|^2 \geq \frac{2}{1 - \epsilon} t\right] \\
    &\leq C^{\prime\prime} \exp\left(-\frac{1 - \eta}{(1 + \eta)(1 - \epsilon)} t\right), \numberthis \label{eq:R0-tail} 
\end{align*}
which holds whenever $\sqrt{\frac{2}{1 - \epsilon} t} \leq \gamma \sqrt{\frac{1}{2}\rho_{\min} N}$, or equivalently whenever $t \leq \frac{1}{4}\gamma^2 (1 - \epsilon)\rho_{\min} N$.

Let us review the sequence of choices of parameters.
We determine $\epsilon \in (0, 1)$ according to the relation between $\beta^2$ and $\what{\mu}_1$, both of which are given in the setting of the Theorem.
We then choose $\eta$ depending on $\epsilon$ such that $\frac{1 - \eta}{(1 + \eta)(1 - \epsilon)} = 1 + \zeta > 1$ for some $\zeta > 0$.
The above reasoning then gives $\gamma$ depending on $\eta$ the parameter of local subgaussianity above.
Finally, the tail bound \eqref{eq:R0-tail} then holds for all $0 \leq t \leq \delta N$, for $\delta \colonequals \frac{1}{4}\gamma^2 (1 - \epsilon)\rho_{\min}$.

In summary, with these choices, we may take $A(N) \colonequals \delta N$ such that, for all $t \in [0, A(N)]$ we have
\[ \P[|R^{(0)}| \geq t] \leq C^{\prime\prime} \exp\left(-(1 + \zeta)t\right). \]
Thus, the conditions of Lemma~\ref{lem:overlap}, and finally we find that
\[ \limsup_{N \to \infty} \E \exp^{\leq D}(|R^{(0)}|) < \infty, \]
concluding the proof along with our previous reduction from an expectation over $R$ to the above one over $R^{(0)}$.
\end{proof}

\subsection{Statistical Lower Bound}

\begin{proof}
We follow the same outline as before; in this case, we want to show that
\[ \chi^2(\P_N \mid \Q_N) = \E \exp(R) - 1 \stackrel{(?)}{=} O(1). \]
Bounding $|R| \leq |R^{(0)}| + |R^{(1)}|$ as before and using that $|R^{(1)}| = O(1)$ almost surely under our assumptions, it suffices to show that $\E \exp(|R^{(0)}|) = O(1)$.
We reuse the bound from \eqref{eq:R0-bound-2} from the previous proof, which gives that, for some $\epsilon > 0$ depending on $\beta$,
\begin{align*} 
\E \exp(|R^{(0)}|)
&\leq \E \exp\left(\frac{1 - \epsilon}{2} \sum_{h = 1}^n \frac{\|T^{(h)}\|_F^2}{|S_h|}\right)
\intertext{and, using the independence of the matrices $T^{(h)}$, we may factorize this as}
&= \prod_{h = 1}^n \E \exp\left(\frac{1 - \epsilon}{2} \cdot \frac{\|T^{(h)}\|_F^2}{|S_h|}\right)
\intertext{Recall that for this result we have the extra assumption that, when $x, x^{\prime} \sim \nu$ are independent, then $x \otimes x^{\prime}$ is a 1-subgaussian random vector. Further, as we used above, the vectorization $\vec(T^{(h)})$ is a sum of $|S_h|$ independent vectors having this same distribution, so in particular $\vec(T^{(h)}) / \sqrt{|S_h|}$ is again a 1-subgaussian random vector. By Proposition~\ref{prop:moment-bound}, all of the expectations in this product are finite, and thus since there is a fixed number $n$ of terms in the product, we have}
&= O(1),
\end{align*}
completing the proof.
\end{proof}

\subsection{Growth of Low-Degree \texorpdfstring{$\chi^2$-}{Chi-Squared }Divergence: Proof of Theorem~\ref{thm:block-low-deg-div}}
\label{sec:pf:thm:block-low-deg-div}

We adapt some of the calculations from Section~\ref{section:block-constant-comp}.
Recall that the low-degree $\chi^2$-divergence is
\begin{align*}
    1 + \chi^2_{\leq D}(\P_N \mid \Q_N)
    &= \E \exp^{\leq D}(R) \\
    &= \E \exp^{\leq D}(R^{(0)} + R^{(1)}),
\end{align*}
where $|R^{(1)}|$ is bounded by an absolute constant.
Thus, by Proposition~\ref{prop:exp-bounds}, it suffices to show that
\[ \E \exp^{\leq D}(R^{(0)}) = \omega(1) \]
as $N \to \infty$.

Recall also that, setting $u_{ab} \colonequals P(v_a^1 \odot v_b^2)$, we have
\[ R^{(0)} = \frac{\beta^2}{2N} \sum_{a, b = 1}^{\kappa} u_{ab}^{\top} \Gamma u_{ab} = \left\langle \Gamma, \sum_{a, b = 1}^{\kappa} u_{ab}u_{ab}^{\top}\right\rangle. \]
Let us define
\[ M \colonequals \sum_{a, b = 1}^{\kappa} u_{ab}u_{ab}^{\top} \in \R^{n \times n}_{\sym}. \]
The entries of this matrix are
\begin{align*} 
M_{rs} 
&= \sum_{a, b = 1}^{\kappa} (u_{ab})_r (u_{ab})_s \\
&= \sum_{a, b = 1}^{\kappa} \left(\frac{1}{\sqrt{|S_r|}} \sum_{i \in S_r} (v_a^1)_i(v_b^2)_i\right)\left(\frac{1}{\sqrt{|S_s|}} \sum_{i \in S_s} (v_a^1)_i(v_b^2)_i\right) \\
&= \sum_{a, b = 1}^{\kappa} \left(\frac{1}{\sqrt{|S_r|}} \sum_{i \in S_r} (x_i^1 \otimes x_i^2)_a \right)\left(\frac{1}{\sqrt{|S_s|}} \sum_{i \in S_s} (x_i^1 \otimes x_i^2)_b \right) \\
&= \left\langle \frac{1}{\sqrt{|S_r|}} \sum_{i \in S_r} x_i^1 \otimes x_i^2, \frac{1}{\sqrt{|S_s|}} \sum_{i \in S_s} x_i^1 \otimes x_i^2 \right\rangle.
\end{align*}

Recall that here the $x_i^1$ and $x_i^2$ for $i \in [N]$ are all i.i.d.\ draws from the distribution $\nu$ on $\R^{\kappa}$, and the $S_r$ are a partition of $[N]$ each of whose blocks has diverging size.
Let us write $\Sigma \colonequals \Cov_{x \sim \nu}[x]$.
Since we also assume that $\nu$ is centered, we have $\Cov_{x^1, x^2 \sim \nu}[x^1 \otimes x^2] = \Sigma^{\otimes 2}$.
By the central limit theorem, we then have the convergence in distribution
\[ \left(\frac{1}{\sqrt{|S_1|}} \sum_{i \in S_1} x_i^1 \otimes x_i^2, \dots, \frac{1}{\sqrt{|S_n|}} \sum_{i \in S_n} x_i^1 \otimes x_i^2\right) \dto (g_1, \dots, g_n) \]
where $g_i \sim \sN(0, \Sigma \otimes \Sigma)$ are i.i.d.\ Gaussian random vectors in $\R^{\kappa^2}$.
Writing $G \in \R^{\kappa^2 \times n}$ for the matrix with these $g_i$ as its columns, we then also have
\[ M \dto G^{\top} G, \]
and therefore, using also that $\frac{1}{N} \Gamma \to \widetilde{\Gamma}$ entrywise,
\[ R^{(0)} = \frac{\beta^2}{2N} \langle \Gamma, M \rangle \dto \frac{\beta^2}{2}\langle \widetilde{\Gamma}, G^{\top}G \rangle = \frac{\beta^2}{2}\Tr(G \widetilde{\Gamma} G^{\top}). \]
Since the underlying distribution $\nu$ is bounded, a routine approximation argument shows that the moments of $R^{(0)}$ converge to the corresponding moments of the right-hand side as well: for any fixed $d \geq 0$,
\begin{align*}
\E R^{(0)^d} 
&\to \left(\frac{\beta^2}{2}\right)^d \E\bigg(\Tr(G \widetilde{\Gamma} G^{\top})\bigg)^d \intertext{Next, we ``whiten'' the distribution of $G$: introducing $H$ of the same shape with i.i.d.\ entries distributed as $\sN(0, 1)$, $G$ has the same law as $(\Sigma^{\otimes 2})^{1/2}H$. In terms of $H$, we may rewrite}
&= \left(\frac{\beta^2}{2}\right)^d \E\bigg(\Tr((\Sigma^{\otimes 2})^{1/2} H \widetilde{\Gamma} H^{\top}(\Sigma^{\otimes 2})^{1/2})\bigg)^d \\
&= \left(\frac{\beta^2}{2}\right)^d \E\bigg(\Tr(\Sigma^{\otimes 2} H \widetilde{\Gamma} H^{\top})\bigg)^d
\intertext{Further, by the rotational invariance of $H$, we may diagonalize the two deterministic matrices appearing here without affecting the value. Letting $\lambda(X)$ denote the vector of eigenvalues of a symmetric $X$, we have}
&= \left(\frac{\beta^2}{2}\right)^d \E\bigg(\Tr(\Diag(\lambda(\Sigma^{\otimes 2})) H \Diag(\lambda(\widetilde{\Gamma})) H^{\top})\bigg)^d \\
&= \left(\frac{\beta^2}{2}\right)^d \E\left(\sum_{a, b = 1}^{\kappa} \sum_{r = 1}^n \lambda_a(\Sigma) \lambda_b(\Sigma) \lambda_r(\widetilde{\Gamma}) H_{(a, b), r}^2 \right)^d,
\intertext{where we identify $[\kappa^2]$ with $[\kappa] \times [\kappa]$ in indexing the rows of $H$. Finally, Proposition~\ref{prop:baopaper} gives a lower bound on such moments of quadratic forms,}
&\geq \left(\frac{\beta^2}{2}\right)^d \cdot 2^{d - 1}(d - 1)! \sum_{a, b = 1}^{\kappa} \sum_{r = 1}^n \lambda_a(\Sigma)^d \lambda_b(\Sigma)^d \lambda_r(\widetilde{\Gamma})^d
\intertext{Here, we have $\|\Sigma\| = 1$ and $\Sigma \succeq 0$, so $\lambda_1(\Sigma) = 1$, while $\lambda_1(\widetilde{\Gamma}) = \what{\mu}_1$ and so, for $d$ even,}
&\geq \frac{(d - 1)!}{2} (\beta^2 \what{\mu}_1)^d
\end{align*}

Now, we have
\begin{align*}
    \E \exp^{\leq D}(R^{(0)}) 
    &= \sum_{d = 0}^D \frac{1}{d!} \E R^{(0)^d}
    \intertext{Here, since we can write $R^{(0)} = \langle U^1, U^2 \rangle$ for two suitable i.i.d.\ random vectors $U^i$, every term is non-negative. So, we may bound below by only the even terms, and fixing some $2d_0$ not depending on $N$, for sufficiently large $N$ we have $D = D(N) \geq 2d_0$ as $D(N) \to \infty$ by assumption. So, we may further bound below by only the even $d$ terms between $d = 2$ and $d = 2d_0$, obtaining after reindexing}
    &\geq \sum_{d = 1}^{d_0} \frac{1}{(2d)!} \E R^{(0)^{2d}} \\
    &\geq \sum_{d = 1}^{d_0} \frac{1}{(2d)!} \cdot \frac{(2d - 1)!}{2} (\beta^2 \what{\mu}_1)^{2d} \\
    &= \frac{1}{4}\sum_{d = 1}^{d_0} \frac{1}{d} (\beta^2 \what{\mu}_1)^{2d}.
\end{align*}
Since by assumption $\beta^2 \what{\mu}_1 > 1$, we obtain a diverging lower bound taking $d_0 \to \infty$.

\section{General Variance Profiles: Proof of Theorem~\ref{theorem:general-bound}}
\label{sec:pf:theorem:general-bound}

In this section, we prove our main result on the case of general variance profiles $\Delta$, not necessarily having block structure.
Our approach in this case will be quite different.
In the block-structured case, we have seen that the low-degree $\chi^2$-divergence calculations become essentially finite-dimensional after appropriately reorganizing to take advantage of the averaging occurring over blocks.
Here, recall that we assume that the signal has rank 1, and thus our overlap random variable $R$ takes the form
\[ R = \frac{\beta^2}{2N} (x^1 \odot x^2)^{\top} \Phi^{\prime} (x^1 \odot x^2), \]
where $\Phi^{\prime} = \Phi + \Diag(\Phi)$ is our slight modification of the entrywise reciprocal of the variance profile matrix (as we will see, just like in the block-structured case, the second term here is inconsequential).

Recall that our initial calculations gave the formula
\begin{align*}
\chi^2_{\leq D}(\P \mid \Q) 
&= \E\left[\exp^{\leq D}(R)\right] - 1
\intertext{In the coming proof, we will expand this into a power series,}
&= \sum_{d = 1}^D \frac{1}{d!} \E R^d,
\end{align*}
and will further expand $\E R^d$ and evaluate or bound the expectations over $x^1$ and $x^2$ that appear.
In each such term, we will then be left with a complicated polynomial of $\Phi$.
So, before continuing to the proof itself, we develop some tools for bounding the polynomials that will appear.

\subsection{Tools for Graph Sums of a Matrix}
\label{sec:graph-sums}

\subsubsection{Algebraic Properties}

The specific polynomials we will need to control are as follows.
\begin{definition}[Scalar graph functions of a matrix]
\label{definition:graph-function}
    Let $G = (V, E)$ be a multigraph (that is, allowing self-loops and parallel edges) with labelled vertex set $V = [M]$, and let $\Phi \in \R^{N \times N}_{\sym}$.
    \begin{align*}
        f_{G}(\Phi) &\coloneqq \sum_{i: [M] \hookrightarrow [N]} \prod_{\{x, y\} \in E(G)} \Phi_{i(x)i(y)} \\
        \tilde{f}_{G}(\Phi) &\coloneqq \sum_{i: [M] \to [N]} \prod_{\{x, y\} \in E(G)} \Phi_{i(x)i(y)}
    \end{align*}
    The difference between the two expressions is that, in the first one, the ``labelling function'' $i$ is constrained to be injective, while in the second it is not.
\end{definition}

The $f_G(\Phi)$ are scalar-valued versions of the \emph{graph matrices} that have found applications in the analysis of sum-of-squares optimization and, more recently, other algorithmic settings \cite{AMP-2016-GraphMatrices,BHKKMP-2019-PlantedClique,HKPRSS-2017-SOSSpectral,HK-2022-RefutingRandomPolynomialSystems,VTW-2022-NearOptimalFittingEllipsoids,HKPX-2023-EllipsoidFittingConstant}.
The $\tilde{f}_G(\Phi)$ have appeared in the theories of free probability and traffic probability as well \cite{BS-2010-SpectralAnalysisRandomMatrices,MS-2012-SharpBoundsGraphMatrices,ACDGM-2021-FreenessOverDiagonal}.
Both can be used as bases for the polynomials of (the entries of) $\Phi$ that are invariant under the two-sided action of the symmetric group; for further discussion on this perspective see \cite{KMW-2024-TensorCumulantsInvariantInference}.

For many of the results we will need, it will turn out to be more convenient to form graphs by ``gluing'' certain vertices of a perfect matching.
We define notation to work with this notion as follows.
The below will rely on our definitions concerning partitions from Section~\ref{sec:combinatorics}.
\begin{definition}[Partition representing a graph]
\label{def:partition-graph}
Let us identify $[2d]$ with the more convenient indexing set $\mathcal{I} = \{1,1',2,2',\ldots,d,d'\}$.
Let $\pi$ be a partition of $\sI$, and let $b: \sI \to \pi$ be the function so that $b(x)$ equals the block of $\pi$ that $x$ belongs to.
We write $G(\pi) = (V, E)$ for the multigraph with $|\pi|$ vertices, identified with $\pi$ itself, whose multiset of edges is $E = \{\{b_{\pi}(x), b_{\pi}(x')\}: x \in [d]\}$.
\end{definition}
\noindent
We use the same notation to refer to sums associated to graphs associated to a partition:
\begin{definition}
\label{definition:partition-function}
    For $\pi$ a partition of $\sI$ as above, we write
    \begin{align*}
        f_{\pi}(\Phi) &\colonequals f_{G(\pi)}(\Phi) = \sum_{i: \pi \hookrightarrow [N]} \prod_{x = 1}^d \Phi_{i(b(x))i(b(x'))}, \\
        \tilde{f}_{\pi}(\Phi) &\colonequals \tilde{f}_{G(\pi)}(\Phi) = \sum_{i: \pi \to [N]} \prod_{x = 1}^d \Phi_{i(b(x))i(b(x'))}.
    \end{align*}
\end{definition}

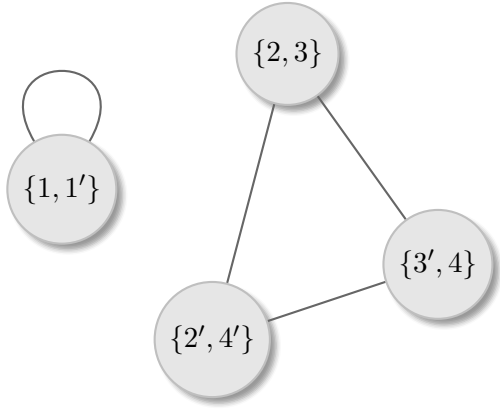
\begin{figure}
\begin{center}
\begin{tikzpicture}[
    node/.style={
        circle, draw=gray!50, fill=gray!20, thick,
        minimum size=1.2cm, font=\bfseries, blur shadow
    },
    edge/.style={
        draw=black!60, thick
    },
    loop edge/.style={
        edge, loop above, looseness=5, in=120, out=60
    }
]

% Nodes
\node[node] (1) at (0,0) {$\{1,1'\}$};
\node[node] (2) at (3,1.8) {$\{2,3\}$};
\node[node] (3) at (5,-1) {$\{3', 4\}$};
\node[node] (4) at (2,-2) {$\{2', 4'\}$};

% Edges
\draw[loop edge] (1) to (1);
\draw[edge] (2) -- (3);
\draw[edge] (2) -- (4);
\draw[edge] (3) -- (4);

\end{tikzpicture}
\vspace{0.5em}
\caption{The graph $G(\pi)$ represented by the partition $\pi =\{\{1,1'\}, \{2,3\}, \{3',4\}, \{4',2'\}\}$ of the set $\sI = \{1, 1', 2, 2', 3, 3', 4, 4'\}$.} 
\label{figure1}
\end{center}
\end{figure}

\begin{example}
To illustrate, for the case $d=4$ and $\pi = \{(1,1'), (2,3), (3',4), (2',4')\}$, the corresponding graph $G(\pi)$ is represented by Figure~\ref{figure1}.
The associated polynomials of a matrix are:
\begin{align*}
f_\pi(\Phi) &= f_{G(\pi)}(\Phi) = \sum_{\substack{i,j,k,l \text{ distinct}}} \Phi_{ii} \Phi_{jl} \Phi_{jk} \Phi_{kl}, \\
    \tilde{f}_\pi(\Phi) &= \tilde{f}_{G(\pi)}(\Phi) =  \sum_{i,j,k,l} \Phi_{ii} \Phi_{jl} \Phi_{jk} \Phi_{kl} = \Tr(\Phi)\Tr(\Phi^3).
\end{align*}
\end{example}

The following describes the simple but important relationship between the two polynomial families $f_G$ and $\tilde{f}_G$.
\begin{proposition}
\label{prop:partition-rearrangement}
    Let $\pi$ be a partition of $\sI$ as above.
    Then, for any $\Phi \in \R^{N \times N}_{\sym}$,
    \[ \tilde{f}_{\pi}(\Phi) = \sum_{\rho \preceq \pi} f_{\pi}(\Phi). \]
\end{proposition}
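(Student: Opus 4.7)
The identity is a standard refinement/coarsening rearrangement, which I would prove by partitioning the unconstrained sum defining $\tilde{f}_{\pi}(\Phi)$ according to the fiber structure of the labelling function. (I note that the right-hand side should read $\sum_{\rho \preceq \pi} f_\rho(\Phi)$, with $\rho$ ranging over coarsenings of $\pi$; as written the summand does not depend on the index.)

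The plan is as follows. First, I would set up a bijection between arbitrary functions $i: \pi \to [N]$ and pairs $(\rho, j)$ consisting of a coarsening $\rho \preceq \pi$ together with an injection $j: \rho \hookrightarrow [N]$. Given $i$, define an equivalence relation on the blocks of $\pi$ by $S \sim T$ iff $i(S) = i(T)$; this produces a coarsening $\rho$ of $\pi$ (viewed as a partition of $\sI$ whose blocks are unions of blocks of $\pi$) together with the natural surjection $q: \pi \twoheadrightarrow \rho$, and $i$ factors uniquely as $i = j \circ q$ for some injection $j: \rho \hookrightarrow [N]$. Conversely, any such $(\rho, j)$ yields the function $i = j \circ q$, and the two constructions are inverse. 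This is the only slightly delicate step; it is really just the universal property of the quotient by the kernel of $i$, but needs to be phrased carefully in terms of the coarsening order $\preceq$ defined earlier.

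Next, I would check that under this factorization the summand transforms as expected. With $b_\pi, b_\rho$ denoting the block-assignment maps of $\pi$ and $\rho$, the equality $b_\rho = q \circ b_\pi$ gives
\[
i(b_\pi(x)) \;=\; j(q(b_\pi(x))) \;=\; j(b_\rho(x))
\]
for every $x \in \sI$. Hence
\[
\prod_{x = 1}^{d} \Phi_{i(b_\pi(x))\, i(b_\pi(x'))} \;=\; \prod_{x = 1}^{d} \Phi_{j(b_\rho(x))\, j(b_\rho(x'))},
\]
which is exactly the summand appearing in $f_\rho(\Phi)$.

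Finally, I would reorganize the sum:
\[
\tilde{f}_{\pi}(\Phi) \;=\; \sum_{i: \pi \to [N]} \prod_{x=1}^{d} \Phi_{i(b_\pi(x))\, i(b_\pi(x'))} \;=\; \sum_{\rho \preceq \pi} \sum_{j: \rho \hookrightarrow [N]} \prod_{x=1}^{d} \Phi_{j(b_\rho(x))\, j(b_\rho(x'))} \;=\; \sum_{\rho \preceq \pi} f_{\rho}(\Phi),
\]
as desired. The only real obstacle is making the bijection in the first step fully precise, since one must verify that the coarsening produced from the fibers of $i$ genuinely lies in $\Part(\sI)$ with the claimed $\rho \preceq \pi$ relation; all of the rest is bookkeeping.
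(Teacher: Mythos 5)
Your proposal is correct and follows essentially the same route as the paper: both arguments factor an arbitrary labelling $\tilde{i}:\pi\to[N]$ uniquely through the coarsening $\rho\preceq\pi$ obtained by merging blocks of $\pi$ on which $\tilde{i}$ agrees, yielding an injection on $\rho$, and then regroup the sum accordingly. You are also right that the displayed identity contains a typo and should read $\sum_{\rho\preceq\pi} f_{\rho}(\Phi)$, which is what the paper's own proof establishes.
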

\begin{proof}
    Given $\tilde{i}: \pi \to [N]$, there are a unique $\rho \preceq \pi$ and $i: \rho \hookrightarrow [N]$ such that, for $q: \pi \to \rho$ as above, $\tilde{i}(S) = i(q(S))$ for each $S \in \pi$.
    In words, this $\rho$ is formed by merging the blocks of $\pi$ on which $\tilde{i}$ is constant, and $i$ is the same function of $\tilde{i}$ but on these coarser blocks.
    Now, working from the definition,
    \begin{align*}
        \tilde{f}_{\pi}(\Phi) 
        &= \sum_{\tilde{i}: \pi \to [N]} \prod_{x = 1}^d \Phi_{\tilde{i}(b_{\pi}(x))\tilde{i}(b_{\pi}(x'))} \\
        &= \sum_{\rho \preceq \pi} \sum_{i: \rho \hookrightarrow [N]} \prod_{x = 1}^d \Phi_{i(b_{\rho}(x))i(b_{\rho}(x'))} \\
        &= \sum_{\rho \preceq \pi} f_{\rho}(\Phi),
    \end{align*}
    completing the proof.
\end{proof}
\noindent
One may also invert the above relation using M\"{o}bius inversion in the partially ordered set of partitions, but we will only need the simpler above direction.

\subsubsection{Matchings and 2-Regular Graphs}

The case of perfect matchings $\pi \in \Match(\sI)$ in the above setting will play a crucial role throughout.
There are two coincident reasons for the importance of the perfect matchings: on the one hand, they are the even partitions with the greatest number of parts, and thus ``entropically dominate'' certain calculations we will encounter; on the other hand, the only cases of $f_{\pi}(\Phi)$ or $\tilde{f}_{\pi}(\Phi)$ that have a natural interpretation in terms of the spectrum of $\Phi$ are the $\tilde{f}_{\pi}(\Phi)$ where $\pi \in \Match(\sI)$, as appears in the above example.

Indeed, for $\pi$ a matching we have that $G(\pi)$ is a 2-regular multigraph on $d$ vertices, or a disjoint union of several cycles (this is a multigraph because some of the cycles may be self-loops or pairs of parallel edges forming 2-cycles).
\begin{definition}
    For $G$ a 2-regular graph, we write $\cyc(G)$ for the set of cycles of $G$.
    Formally, this may be viewed as $\cyc(G) \in \Part(V(G))$, a partition of the vertices into those belonging to each cycle of $G$.
\end{definition}

It is well-known that one may sample uniformly from the set of 2-regular multigraphs on $d$ vertices using the \emph{configuration model}: to each vertex we assign two ``half-edges'' or ``stubs'' (for a total of $2d$ half-edges), and then form a graph by linking half-edges along a uniformly random perfect matching.
Analyzing this probability measure, the following result has been derived in prior work.

\begin{proposition}[\cite{tishby2023distribution}]
\label{prop:graph-moments}
Let $\mathcal{U}$ be the set of all $2$-regular multigraphs on the labelled vertex set $[d]$ (which also must have $d$ edges). Let $\mathcal{G}$ be the uniform distribution over $\mathcal{U}$. Then, for any $a > 0$,
\begin{align*}
    |\mathcal{U}| &= (2d - 1)!!, \\
    \Ex_{G \sim \mathcal{G}}\left[a^{|\cyc(G)|}\right] &= \frac{2^d}{(2d-1)!!} \frac{\Gamma\left(d + \frac{a}2\right)}{\Gamma\left(\frac{a}2\right)}.
\end{align*}
Equivalently, 
$$\sum_{G \in \mathcal{U}} a^{|\cyc(G)|} = 2^d \cdot \frac{\Gamma\left(d + \frac{a}2\right)}{\Gamma\left(\frac{a}2\right)} = \prod_{j = 1}^d (a + 2j - 2).$$
\end{proposition}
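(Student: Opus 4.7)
I would work throughout with the configuration-model description of $\mathcal{U}$: identify $[2d]$ with the stub set $\{1,1',2,2',\dots,d,d'\}$, where $i$ and $i'$ are the two half-edges at vertex $i$, and let each $G \in \mathcal{U}$ correspond to a perfect matching of these $2d$ stubs (an edge between vertices $i$ and $j$ being recorded for each matched pair of stubs attached to $i$ and $j$). The count $|\mathcal{U}|=(2d-1)!!$ is then just the number of perfect matchings of $2d$ labelled elements; equivalently, it is the specialization $a=1$ of the main identity, since $\prod_{j=1}^{d}(1+2j-2)=(2d-1)!!$.

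The main work is the identity $P_d(a)\colonequals\sum_{G\in\mathcal{U}}a^{|\cyc(G)|}=\prod_{j=1}^{d}(a+2j-2)$, which I would prove by induction on $d$ through the recursion $P_d(a)=(a+2(d-1))P_{d-1}(a)$, obtained by splitting on how the two stubs at vertex $d$ are matched. If $d$ is matched with $d'$, vertex $d$ forms an isolated self-loop contributing exactly one extra cycle, while the remaining stubs form an arbitrary perfect matching on $\{1,1',\dots,(d-1),(d-1)'\}$; this case contributes $a\,P_{d-1}(a)$. Otherwise $d$ is matched to a stub of some vertex $k\neq d$, and I would splice out vertex $d$ by deleting it together with its two incident edges and reconnecting the two neighbouring stubs: if the cycle through $d$ has length $\ge 3$ it simply shortens by one, while if it has length $2$ (parallel edges $d$--$k$) the two stubs of $k$ previously attached to $d$ are instead joined into a self-loop at $k$. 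In both sub-cases $|\cyc(G)|$ is unchanged and the result is a perfect stub-matching on $[d-1]$.

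To close the recursion I would then count preimages under the splicing: every stub-matching on $[d-1]$ has exactly $2(d-1)$ preimages of this form, obtained by choosing one of its $d-1$ edges, breaking it, and attaching the two stubs $d,d'$ to its endpoints in one of two orientations. This contributes $2(d-1)P_{d-1}(a)$, so $P_d(a)=(a+2(d-1))P_{d-1}(a)$; combined with $P_0(a)=1$ this yields $P_d(a)=\prod_{j=1}^{d}(a+2j-2)$. Factoring a $2$ out of each of the $d$ factors rewrites this as $2^{d}\prod_{j=1}^{d}(a/2+j-1)=2^{d}\,\Gamma(d+a/2)/\Gamma(a/2)$, matching the stated Gamma-function form; dividing by $|\mathcal{U}|=(2d-1)!!$ then gives the claimed expectation under the uniform measure $\mathcal{G}$.

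The step I expect to be the most delicate is checking that the splicing construction really is a $2(d-1)$-to-$1$ surjection with cycle count preserved. One must verify carefully that the length-$2$ sub-case (parallel edges between $d$ and $k$) converts a $2$-cycle into a self-loop without changing $|\cyc(G)|$ and without breaking $2$-regularity at $k$, and that the edge-plus-orientation count on the inverse side exactly equals $(2d-2)(2d-3)!!$, i.e.\ the total number of stub-matchings on $[2d]$ in which $d$ is not paired with $d'$. Once this bijective bookkeeping is in place, the algebraic conclusion of the induction is immediate.
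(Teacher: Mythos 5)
Your argument is correct. Note that the paper does not prove this proposition at all: it is quoted from the cited reference \cite{tishby2023distribution}, with only the surrounding text explaining the configuration-model identification of $\mathcal{U}$ with stub-matchings (which is indeed the reading forced by $|\mathcal{U}| = (2d-1)!!$, since distinct stub-matchings can yield the same multigraph). Your self-contained induction via splicing out vertex $d$ is a clean elementary route to the identity: the two cases ($d$ matched to $d'$ giving the factor $a$, and the $2(d-1)$-to-$1$ splicing map giving the factor $2(d-1)$) are both handled correctly, including the delicate length-$2$ sub-case where a double edge collapses to a self-loop while preserving both $2$-regularity and the cycle count, and your consistency check $2(d-1)(2d-3)!! = (2d-2)(2d-3)!!$ confirms the preimage count. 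The algebraic conversion $\prod_{j=1}^d(a+2j-2) = 2^d\,\Gamma(d+a/2)/\Gamma(a/2)$ and the specialization $a=1$ recovering $(2d-1)!!$ are also right. In short, you have supplied a valid proof of a result the paper only cites.
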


One may also take a ``dual'' viewpoint to the configuration model: suppose that we have a fixed perfect matching on a set of vertices of size $2d$ (such as $\sI$ discussed above).
Form a uniformly random perfect matching of those vertices, where the $d$ pairs of this perfect matching are also uniformly labelled by $[d]$.
Identifying each pair of vertices joined in this perfect matching and giving the resulting vertex the label of that pair yields a random graph in $\mathcal{U}$.
By symmetry considerations we see that this is again a uniformly random graph in $\mathcal{U}$, i.e., that this has the law $\mathcal{G}$.
But, what we have described above is precisely the process of drawing $\pi \sim \Unif(\Match(\sI))$ and forming $G(\pi)$, except that we endow the pairs of $\pi$ with a random labelling.
Since this labelling has no effect on $|\cyc(G(\pi))|$, we find the following corollary that will be useful in our calculations.

\begin{corollary}
    \label{cor:moment}
    For any $a > 0$,
    \[ \sum_{\pi \in \Match(\sI)} a^{|\cyc(G(\pi))|} = 2^d \cdot \frac{\Gamma\left(d + \frac{a}2\right)}{\Gamma\left(\frac{a}2\right)} = \prod_{j = 1}^d (a + 2j - 2). \]
\end{corollary}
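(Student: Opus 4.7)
The plan is to exhibit a bijection $\Match(\sI) \leftrightarrow \mathcal{U}$ that preserves the cycle-count statistic, after which the identity follows immediately from Proposition~\ref{prop:graph-moments} by multiplying through by $|\mathcal{U}| = (2d-1)!!$ and rewriting the resulting Gamma ratio as a product.

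The bijection is the one sketched in the paragraph preceding the corollary. I would identify each $j \in [d]$ with a configuration-model vertex carrying two half-edges labeled $j$ and $j'$, so that a perfect matching $\pi$ of $\sI = \{1,1',\ldots,d,d'\}$ is exactly a perfect matching of the $2d$ half-edges, and hence corresponds to an element $G_\pi \in \mathcal{U}$. This assignment is manifestly injective, and both $|\Match(\sI)|$ and $|\mathcal{U}|$ equal $(2d-1)!!$, so it is a bijection.

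The only nontrivial step is to verify that $|\cyc(G(\pi))| = |\cyc(G_\pi)|$, which needs some care because the two constructions use ``dual'' vertex sets: in $G(\pi)$ the vertices are the blocks of $\pi$ and the $d$ edges are indexed by $x \in [d]$, whereas in $G_\pi$ the vertices are $[d]$ and the $d$ edges are indexed by the blocks of $\pi$. To handle this cleanly I would introduce the auxiliary $2$-regular multigraph $H_\pi$ on vertex set $\sI$ whose $2d$ edges are the $d$ blocks of $\pi$ together with the $d$ ``spine'' pairs $\{\{j,j'\} : j \in [d]\}$. Since each $v \in \sI$ lies in exactly one $\pi$-block and exactly one spine pair, $H_\pi$ is $2$-regular, and every cycle of $H_\pi$ alternates between $\pi$-edges and spine-edges (so has even length). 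Contracting the spine-edges of $H_\pi$ yields $G_\pi$, while contracting the $\pi$-edges yields an isomorphic copy of $G(\pi)$; in either contraction, a cycle of length $2\ell$ in $H_\pi$ becomes a cycle of length $\ell$, and the total number of cycles is preserved. Hence $|\cyc(G(\pi))| = |\cyc(H_\pi)| = |\cyc(G_\pi)|$.

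Putting this together, the left-hand side of the corollary equals $\sum_{G \in \mathcal{U}} a^{|\cyc(G)|} = |\mathcal{U}| \cdot \mathbb{E}_{G \sim \mathcal{G}}[a^{|\cyc(G)|}] = 2^d \,\Gamma(d + a/2)/\Gamma(a/2)$ by Proposition~\ref{prop:graph-moments}. The functional equation $\Gamma(x+1) = x\Gamma(x)$ gives $\Gamma(d + a/2)/\Gamma(a/2) = \prod_{j=0}^{d-1}(a/2 + j)$, and distributing the factor $2^d$ across the $d$ terms yields $\prod_{j=1}^{d}(a + 2j - 2)$. The main (and essentially only) obstacle is the cycle-preservation step, and it becomes transparent once the auxiliary graph $H_\pi$ has been introduced.
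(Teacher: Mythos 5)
Your proof is correct and follows essentially the same route as the paper: the paper derives the identity from Proposition~\ref{prop:graph-moments} via the same ``dual'' reading of the configuration model, in which the fixed pairing $\{j,j'\}$ and the random matching $\pi$ exchange the roles of vertex-gluing and edge-formation. Your auxiliary alternating $2$-regular graph $H_\pi$ is just a more explicit rendering of the ``symmetry considerations'' the paper invokes to see that the cycle counts of $G(\pi)$ and of the corresponding configuration-model graph agree.
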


\subsubsection{Sharpened Bounds on Positive Graph Sums}

We will be interested in bounding the $f_G(\Phi)$ and $\tilde{f}_G(\Phi)$ expressions for $\Phi$ the inverse variance profile matrix discussed previously.
Recall that this matrix has non-negative entries, and in the block-structured case is low-rank.
Our assumptions also roughly impose that all entries of $\Phi$ are of order $\Theta(1)$ (we actually only require a uniform upper bound, but we think of all entries being of the same constant order as the ``typical case,'' which for example is true in the case of block-structured models).

While these polynomials have appeared in the literature before and some bounds have been proved on them, usually these bounds are more effective when neither of the above assumptions holds.
The following is the one of the main and most useful bounds on these expressions stemming from the free probability literature mentioned above; for the sake of exposition we restrict our attention to a special class of graphs.
\begin{proposition}[\cite{BS-2010-SpectralAnalysisRandomMatrices,MS-2012-SharpBoundsGraphMatrices}]
    Suppose that $G$ is a 2-edge-connected graph.
    Then,
    \[ \tilde{f}_G(\Phi) \leq N \cdot \|\Phi\|^{|E(G)|}. \]
\end{proposition}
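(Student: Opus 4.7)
The plan is to prove this by induction along an ear decomposition of $G$. Since $G$ is $2$-edge-connected, the classical ear-decomposition theorem for $2$-edge-connected multigraphs gives a chain $G_0 \subsetneq G_1 \subsetneq \cdots \subsetneq G_m = G$ in which $G_0$ is a cycle and each $G_{t+1}$ is obtained from $G_t$ by attaching an \emph{ear}---an open path or a closed cycle---consisting of $\ell_t \geq 1$ edges whose $\ell_t - 1$ interior vertices are new, while whose endpoints $u_t, v_t$ already belong to $V(G_t)$. Each ear contributes exactly $\ell_t$ edges, so $|E(G)| = |E(G_0)| + \sum_t \ell_t$.

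For the base case I would compute directly
\[
\tilde f_{C_{k_0}}(\Phi) \;=\; \sum_{i_1, \ldots, i_{k_0} \in [N]} \Phi_{i_1 i_2} \Phi_{i_2 i_3} \cdots \Phi_{i_{k_0} i_1} \;=\; \Tr(\Phi^{k_0}) \;=\; \sum_{j=1}^N \lambda_j(\Phi)^{k_0},
\]
and the desired estimate $|\Tr(\Phi^{k_0})| \leq N \|\Phi\|^{k_0}$ then follows from $|\lambda_j(\Phi)| \leq \|\Phi\|$ for every $j$.

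For the inductive step, I would separate the sum defining $\tilde{f}_{G_{t+1}}(\Phi)$ into an outer sum over labelings of $V(G_t)$ and an inner sum over the new interior vertices of the ear just attached. The inner sum telescopes into a single matrix-power entry, giving
\[
\tilde{f}_{G_{t+1}}(\Phi) \;=\; \sum_{i: V(G_t) \to [N]} \Biggl(\prod_{e \in E(G_t)} \Phi_{i(e)}\Biggr) \cdot (\Phi^{\ell_t})_{i(u_t)\,i(v_t)}.
\]
The key uniform estimate is $|(\Phi^{\ell_t})_{ab}| = |e_a^\top \Phi^{\ell_t} e_b| \leq \|\Phi\|^{\ell_t}$, valid for every $a, b \in [N]$. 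Pulling this bound outside the outer sum and invoking the inductive hypothesis yields
\[
\tilde{f}_{G_{t+1}}(\Phi) \;\leq\; \|\Phi\|^{\ell_t} \cdot \tilde{f}_{G_t}(\Phi) \;\leq\; N \|\Phi\|^{|E(G_t)| + \ell_t} \;=\; N \|\Phi\|^{|E(G_{t+1})|},
\]
closing the induction.

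There is no serious technical obstacle here: both the base case and the inductive step reduce to the operator-norm bound on matrix entries. The crucial role of $2$-edge-connectivity is that it is exactly what guarantees the decomposition begins with a genuine cycle---yielding the single factor of $N$ from $\Tr(\Phi^{k_0})$, rather than the $N^{|V(G)|}$ one would incur by summing over each vertex independently---and what ensures every subsequent extension is an ear rather than a dangling tree hanging off $G_t$. What this argument does \emph{not} exploit is any structural information about $\Phi$ beyond its operator norm, which is precisely the looseness that the sharpened bounds developed in the remainder of this section are designed to remedy.
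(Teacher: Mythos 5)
The paper does not prove this proposition; it is quoted from \cite{BS-2010-SpectralAnalysisRandomMatrices,MS-2012-SharpBoundsGraphMatrices} as a known benchmark to be improved upon, so there is no in-paper argument to compare against. Your ear-decomposition proof is correct in the setting the paper actually uses it: the base case $\tilde f_{C_k}(\Phi)=\Tr(\Phi^k)\le N\|\Phi\|^k$ is right, the characterization of $2$-edge-connected multigraphs by ear decompositions with open \emph{or} closed ears is the right one (open ears alone would only cover $2$-vertex-connectivity), and collapsing an ear's interior vertices into the entry $(\Phi^{\ell_t})_{i(u_t)i(v_t)}\le\|\Phi\|^{\ell_t}$ is exactly the telescoping you need. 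The one point you should make explicit is where you "pull this bound outside the outer sum'': the outer weights $\prod_{e\in E(G_t)}\Phi_{i(e)}$ must be non-negative for the termwise replacement of $(\Phi^{\ell_t})_{i(u_t)i(v_t)}$ by $\|\Phi\|^{\ell_t}$ to be monotone. This holds here because $\Phi$ is a reciprocal variance profile and hence entrywise non-negative (the section is explicitly about "positive graph sums''), but for a general symmetric $\Phi$ your induction would only give $|\tilde f_G(\Phi)|\le N\,\||\Phi|\|^{|E(G)|}$ with the entrywise absolute value, which can be strictly weaker; the cited references prove the bound for arbitrary (even edge-dependent) matrices by a more involved argument. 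So: correct and appropriately elementary for the paper's purposes, with one implicit use of non-negativity worth stating.
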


The following example shows that, in the simplest choice of $\Phi$ that will arise in our setting, this bound is already quite loose.
\begin{example}[All-ones matrix]
    If $\Phi = 11^{\top}$, then we have $\|\Phi\| = N$, so the right-hand side of the bound above is $N^{|E(G)| + 1}$.
    On the other hand, the actual value is easily computed to be $\tilde{f}_G(\Phi) = N^{|V(G)|}$.
    Since for a connected graph $G$ we have $|V(G)| = |E(G)| + 1$ if and only if $G$ is a tree, which is never 2-edge-connected, we see that on 2-edge-connected graphs the above bound is always suboptimal on this $\Phi$.
\end{example}
\noindent
Our main goal here will be to develop bounds that behave, for the $\Phi$ with the rough structure outlined above, more like $\tilde{f}_G(\Phi) \lesssim N^{|V(G)|}$.

We build up some tools for increasingly more general bounds on the $\tilde{f}_G(\Phi)$, starting from simple $G$ and using those bounds to treat more general $G$.

\begin{proposition}[Paths]
    Let $G$ be a path on $k \geq 1$ vertices (where the path on $k = 1$ vertex is a single isolated vertex).
    Then,
    \[ \tilde{f}_G(\Phi) \leq N \|\Phi\|^{k - 1} = \left(\frac{N}{\|\Phi\|}\right) \|\Phi\|^k. \]
\end{proposition}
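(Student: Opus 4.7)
The plan is to recognize that for a path $G$ on $k \geq 1$ vertices, the sum $\tilde{f}_G(\Phi)$ admits a clean algebraic rewriting in terms of matrix products applied to the all-ones vector $1_N$, after which the bound reduces to Cauchy--Schwarz plus submultiplicativity of the spectral norm.

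First I would handle the degenerate case $k = 1$ directly: the graph has no edges, so the product over $E(G)$ in Definition~\ref{definition:graph-function} is empty and $\tilde{f}_G(\Phi) = \sum_{i \in [N]} 1 = N$, matching the right-hand side. For $k \geq 2$, I would label the vertices of $G$ as $1, 2, \dots, k$ in order along the path, so that
\[ \tilde{f}_G(\Phi) = \sum_{i_1, \dots, i_k \in [N]} \Phi_{i_1 i_2} \Phi_{i_2 i_3} \cdots \Phi_{i_{k-1} i_k}. \]
Performing the internal sums over $i_2, \dots, i_{k-1}$ telescopes into a single power of $\Phi$, giving
\[ \tilde{f}_G(\Phi) = \sum_{i_1, i_k \in [N]} (\Phi^{k-1})_{i_1 i_k} = 1_N^{\top} \Phi^{k-1} 1_N. \]

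From here, Cauchy--Schwarz gives $1_N^{\top} \Phi^{k-1} 1_N \leq \|1_N\| \cdot \|\Phi^{k-1} 1_N\| \leq \|1_N\|^2 \cdot \|\Phi^{k-1}\| = N \|\Phi^{k-1}\|$, and submultiplicativity of the spectral norm yields $\|\Phi^{k-1}\| \leq \|\Phi\|^{k-1}$, producing the claimed bound $\tilde{f}_G(\Phi) \leq N \|\Phi\|^{k-1}$. The rewriting on the right-hand side in the statement is just algebra.

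There is no real obstacle here; the only thing to be slightly careful about is the handling of $k = 1$ (and implicitly $k = 2$, where $\Phi^{k-1} = \Phi$ and the telescoping is vacuous), and noting that the inequality uses nothing about the non-negativity of $\Phi$, only symmetry so that $\|\Phi^{k-1}\| = \|\Phi\|^{k-1}$ holds even when $k-1$ is odd. This bound will serve as the base case for inductively controlling $\tilde{f}_G(\Phi)$ on more general graphs by decomposing them into path-like pieces.
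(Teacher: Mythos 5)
Your proof is correct and is precisely the standard argument the paper leaves implicit (the proposition is stated without proof there): writing $\tilde{f}_G(\Phi) = 1_N^{\top}\Phi^{k-1}1_N$ and bounding via $|1_N^{\top}\Phi^{k-1}1_N| \le \|1_N\|^2\|\Phi^{k-1}\| \le N\|\Phi\|^{k-1}$, with the edgeless case $k=1$ handled separately. The only tiny imprecision is the closing aside: the step you actually use is the submultiplicative inequality $\|\Phi^{k-1}\|\le\|\Phi\|^{k-1}$, which needs neither symmetry nor non-negativity, so no parity caveat is required.
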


\begin{proposition}[Cycles]
\label{prop:cycles}
    Let $G$ be a cycle on $k \geq 1$ vertices (where the cycle on $k = 1$ vertex is a single self-loop).
    Then, both of the following hold:
    \begin{align*}
        \tilde{f}_G(\Phi) &\leq \rank(\Phi) \|\Phi\|^k, \\
        \tilde{f}_G(\Phi) &\leq N \|\Phi\|_{\ell^{\infty}} \|\Phi\|^{k - 1} \\
        &= \left(\frac{N \|\Phi\|_{\ell^{\infty}}}{\|\Phi\|}\right) \|\Phi\|^k.
    \end{align*}
\end{proposition}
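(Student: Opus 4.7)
The plan is to observe that, for a cycle $G$ on $k \geq 1$ vertices with the conventions in the statement, one has the clean identification
\[ \tilde{f}_G(\Phi) = \Tr(\Phi^k). \]
For $k \geq 2$ this is just the usual expansion of the matrix power along the cycle $i_1 \to i_2 \to \cdots \to i_k \to i_1$, and for $k = 1$ the single self-loop contributes $\sum_i \Phi_{ii} = \Tr(\Phi)$. Once this is established, both bounds reduce to standard facts, and we never need to touch the unordered, combinatorial form of $\tilde{f}_G$ again.

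For the first bound, I would diagonalize $\Phi$ and write $\Tr(\Phi^k) = \sum_{i=1}^N \lambda_i(\Phi)^k$, noting that only $\rank(\Phi)$ of the eigenvalues are nonzero. Since $\Phi$ has non-negative entries, Theorem~\ref{thm:perron} gives $|\lambda_i(\Phi)| \leq \lambda_1(\Phi) = \|\Phi\|$ for every $i$, so $|\lambda_i(\Phi)^k| \leq \|\Phi\|^k$ regardless of the parity of $k$. Summing over the at most $\rank(\Phi)$ nonzero eigenvalues yields $\tilde{f}_G(\Phi) \leq \rank(\Phi)\|\Phi\|^k$.

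For the second bound, I would split cases. When $k = 1$, simply $\Tr(\Phi) = \sum_i \Phi_{ii} \leq N\|\Phi\|_{\ell^{\infty}}$, matching the claim since $\|\Phi\|^{k-1} = 1$. When $k \geq 2$, I would write
\[ \Tr(\Phi^k) = \sum_{i,j=1}^{N} \Phi_{ij}\,(\Phi^{k-1})_{ji}. \]
Crucially, $\Phi$ has non-negative entries, so every entry of $\Phi^{k-1}$ is also non-negative, and every $\Phi_{ij} \in [0, \|\Phi\|_{\ell^{\infty}}]$. Factoring out $\|\Phi\|_{\ell^{\infty}}$ gives
\[ \Tr(\Phi^k) \leq \|\Phi\|_{\ell^{\infty}} \sum_{i,j=1}^N (\Phi^{k-1})_{ji} = \|\Phi\|_{\ell^{\infty}}\, 1_N^{\top} \Phi^{k-1} 1_N \leq \|\Phi\|_{\ell^{\infty}} \|1_N\|^2 \|\Phi^{k-1}\| = N \|\Phi\|_{\ell^{\infty}} \|\Phi\|^{k-1}, \]
where the last inequality is Cauchy--Schwarz combined with the submultiplicativity $\|\Phi^{k-1}\| \leq \|\Phi\|^{k-1}$.

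There is no real obstacle here: once the identification with $\Tr(\Phi^k)$ is made, both estimates are one-line spectral or entrywise manipulations. The only subtle point worth flagging in the write-up is why non-negativity of the entries of $\Phi^{k-1}$ matters in the second bound; without it, one would only obtain $\|\Phi\|_{\ell^{\infty}}\sum_{i,j}|(\Phi^{k-1})_{ji}|$, which is in general larger than $1_N^{\top}\Phi^{k-1}1_N$ and does not admit the clean operator-norm bound.
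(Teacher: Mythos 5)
Your proposal is correct and is exactly the argument the paper intends (the proposition is stated without proof there): the identification $\tilde{f}_G(\Phi)=\Tr(\Phi^k)$ is precisely how the paper itself interprets cycle sums (see the example computing $\Tr(\Phi)\Tr(\Phi^3)$), and the two bounds then follow from the standard eigenvalue count and the entrywise estimate $1_N^{\top}\Phi^{k-1}1_N\leq N\|\Phi\|^{k-1}$, which correctly uses the non-negativity of $\Phi$ that holds for the reciprocal variance profile in context.
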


To compare the above bounds, recall that we are interested in $\Phi$ that behave like the all-ones matrix.
In particular, we expect to have $\|\Phi\|_{\ell^{\infty}} = O(1)$, while $\|\Phi\| = \Theta(N)$.
Thus when $\rank(\Phi) = O(1)$ then the above two bounds are of the same order, while when $\rank(\Phi) = \omega(1)$ the second bound can be smaller than the first.
For this reason, and to avoid our results depending on the more fragile property of the rank, we will rely on the second bound above.

In general, one may of course also bound $\tilde{f}_{G}(\Phi) \leq N^k \|\Phi\|_{\ell^{\infty}}^k$ in the above setting, but we will see that to obtain sharp results it is important for us that it is $\|\Phi\|^k$ that appears in our bound, not $(N \cdot \|\Phi\|_{\ell^{\infty}})^k$, which may be viewed as replacing $\|\Phi\|$ by a coarse entrywise bound on this norm.
The situation in our application will be delicate: it will be acceptable for this coarse bound expression to appear linearly, as it does above, but not with the possibly much larger exponent $k$.

Finally, we show how to control much more general graph sums by reducing them to the above case.

\begin{proposition}
    Suppose that $G$ and $H$ are graphs on the same vertex set, and $H$ is formed by deleting $t$ edges from $G$.
    Then,
    \[ \tilde{f}_G(\Phi) \leq \|\Phi\|_{\ell^{\infty}}^t \tilde{f}_H(\Phi). \]
\end{proposition}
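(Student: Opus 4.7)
The plan is to prove this by a termwise comparison of the two sums defining $\tilde{f}_G(\Phi)$ and $\tilde{f}_H(\Phi)$. The only ingredient needed beyond unwinding the definitions is the standing non-negativity of entries of $\Phi$, which ensures that every summand on both sides is non-negative and that termwise upper bounds can be safely summed.

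Concretely, since $H$ is obtained from $G$ by deleting $t$ edges, I would write the multiset of edges of $G$ as a disjoint union $E(G) = E(H) \sqcup S$ with $|S| = t$. For a fixed map $i : V(G) \to [N]$, this factors the defining product as
\[ \prod_{\{x,y\} \in E(G)} \Phi_{i(x) i(y)} \;=\; \Biggl(\prod_{\{x,y\} \in E(H)} \Phi_{i(x) i(y)}\Biggr) \cdot \Biggl(\prod_{\{x,y\} \in S} \Phi_{i(x) i(y)}\Biggr). \]
Each factor in the second product is at most $\|\Phi\|_{\ell^\infty}$, so the second product is bounded above by $\|\Phi\|_{\ell^\infty}^t$. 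Because the first product is non-negative, this yields the pointwise inequality
\[ \prod_{\{x,y\} \in E(G)} \Phi_{i(x) i(y)} \;\leq\; \|\Phi\|_{\ell^\infty}^t \prod_{\{x,y\} \in E(H)} \Phi_{i(x) i(y)}. \]
Summing over all $i : V(G) \to [N]$ (the same index set on both sides since $G$ and $H$ share a vertex set, and since $\tilde f$ imposes no injectivity constraint) gives the desired inequality.

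There is no real obstacle here; the two points worth flagging are that (i) non-negativity of $\Phi$ is essential for preserving the inequality after taking products and sums, and (ii) the argument uses the unconstrained sums $\tilde f$, not the injective sums $f$, so that the index set on the right matches the one on the left without any additional bookkeeping.
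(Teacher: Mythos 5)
Your proof is correct, and it is the argument the paper intends (the paper states this proposition without proof, treating it as immediate). Splitting the edge product over $E(G) = E(H) \sqcup S$, bounding each deleted factor by $\|\Phi\|_{\ell^{\infty}}$, and summing using the non-negativity of the entries of $\Phi$ (which is a standing assumption throughout this section, since $\Phi$ is a reciprocal variance profile) is exactly the intended reasoning, and your remarks about why the unconstrained sums $\tilde f$ make the bookkeeping trivial are apt.
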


\begin{proposition}
    Let $G$ be a multigraph with $|V(G)| = m$ and $|E(G)| = d$ where every vertex has degree at least 2.
    Then, it is possible to remove at most $2d - 2m \geq 0$ edges from $G$ to leave a graph consisting of a disjoint union of paths (including isolated vertices) and cycles (including isolated self-loops), and having at most $2d - 2m + |\conn(G)|$ such connected components.
\end{proposition}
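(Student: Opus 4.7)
The plan is a short greedy argument based on a degree-excess potential function. First, observe that a multigraph is a disjoint union of paths (including isolated vertices) and cycles (including isolated self-loops) if and only if every vertex has degree at most $2$. So it suffices to remove edges until the maximum degree is $\leq 2$, while tracking both the number of removals and the effect on components.

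Define the excess potential
\[ \phi(G) \colonequals \sum_{v \in V(G)} \max(\deg(v) - 2, 0). \]
Since by hypothesis $\deg(v) \geq 2$ for every $v$, we have $\phi(G) = \sum_v \deg(v) - 2m = 2d - 2m$ by the handshake lemma, which in particular verifies $2d - 2m \geq 0$. Now run the greedy procedure: while some vertex $v$ has $\deg(v) \geq 3$, delete any edge incident to $v$. The key verification is that each such deletion decreases $\phi$ by at least $1$. If the removed edge is $\{v, u\}$ with $u \neq v$, the excess at $v$ drops by exactly $1$ (since $\deg(v) \geq 3$ is decreased by $1$ and remains $\geq 2$), while the excess at $u$ can only weakly decrease; if the removed edge is a self-loop at $v$, then $\deg(v)$ drops by $2$, and a direct case check gives that the excess at $v$ decreases by $\min(2, \deg(v) - 2) \geq 1$. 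Hence the procedure terminates in at most $\phi(G) = 2d - 2m$ steps, leaving a graph $H$ of maximum degree $\leq 2$, i.e., a disjoint union of paths and cycles.

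For the component bound, any single edge deletion increases the number of connected components by at most $1$ (removing a self-loop or a non-bridge leaves the count unchanged; removing a bridge splits one component into two). Starting from $|\conn(G)|$ components and performing at most $2d - 2m$ deletions thus yields at most $|\conn(G)| + 2d - 2m$ components in $H$. There is no substantive obstacle here beyond the self-loop case in the potential analysis; in particular, no global structure of $G$ is needed, only the local degree condition.
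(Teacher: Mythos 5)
Your proof is correct and follows essentially the same strategy as the paper's: delete edges at vertices of excess degree, bound the number of deletions by the total degree excess $\sum_v(\deg(v)-2)=2d-2m$, and note that each deletion increases the component count by at most one. The only cosmetic difference is that you run a dynamic greedy with an explicit potential function (and carefully handle the self-loop case), whereas the paper makes a single static selection of $\deg(v)-2$ edges per vertex; both are valid.
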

\begin{proof}
    For each vertex $v$ of $G$, choose an arbitrary $\deg(v) - 2 \geq 0$ edges adjacent to that vertex to remove, allowing edges to be repeated in these choices.
    The total number of edges removed is at most $\sum_{v \in V(G)} (\deg(v) - 2) = 2d - 2m$.
    In the resulting graph, every vertex has degree at most 2, and thus all connected components are either paths or cycles.
    Further, removing a single edge from a graph can only increase the number of connected components by at most 1.
    Since initially $G$ has $|\conn(G)|$ connected components, the result on the number of connected components follows as well.
\end{proof}

\begin{corollary}
    \label{cor:graph-sum-bound}
    Let $G$ be a multigraph with $|V(G)| = m$ and $|E(G)| = d$ where every vertex has degree at least 2.
    Then,
    \[ \tilde{f}_G(\Phi) \leq \left(\frac{N(1 + \|\Phi\|_{\ell^{\infty}})^2}{\|\Phi\|}\right)^{2d - 2m + |\conn(G)|} \|\Phi\|^{m}. \]
\end{corollary}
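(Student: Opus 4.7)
\textbf{Proof plan for Corollary~\ref{cor:graph-sum-bound}.}
The plan is to chain together the three preceding results: the edge-deletion bound, the decomposition into paths and cycles, and the individual path/cycle bounds. First, I would apply the preceding proposition to find a subset $F \subseteq E(G)$ of size $|F| \leq 2d - 2m$ such that $H \colonequals (V(G), E(G) \setminus F)$ is a disjoint union of paths and cycles, with $|\conn(H)| \leq 2d - 2m + |\conn(G)|$. Applying the edge-deletion proposition $|F|$ times, and using that $\|\Phi\|_{\ell^\infty} \leq 1 + \|\Phi\|_{\ell^\infty}$, yields
\[ \tilde{f}_G(\Phi) \leq \|\Phi\|_{\ell^\infty}^{2d - 2m} \, \tilde{f}_H(\Phi) \leq (1 + \|\Phi\|_{\ell^\infty})^{2d - 2m} \, \tilde{f}_H(\Phi). \]

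Next, I would use that $\tilde{f}$ is multiplicative across disjoint unions: if $H = H_1 \sqcup \cdots \sqcup H_c$ with $c \colonequals |\conn(H)|$, then any labelling $i: V(H) \to [N]$ decomposes independently across the components, so directly from Definition~\ref{definition:graph-function} we get $\tilde{f}_H(\Phi) = \prod_{\ell = 1}^c \tilde{f}_{H_\ell}(\Phi)$. Each $H_\ell$ is either a path or a cycle on some $k_\ell \geq 1$ vertices with $\sum_\ell k_\ell = m$. The path and cycle propositions each give a bound of the form $\tilde{f}_{H_\ell}(\Phi) \leq \alpha \cdot \|\Phi\|^{k_\ell}$, where in the path case $\alpha \leq N / \|\Phi\|$ and in the cycle case $\alpha \leq N \|\Phi\|_{\ell^\infty} / \|\Phi\|$; in either case $\alpha \leq N(1 + \|\Phi\|_{\ell^\infty}) / \|\Phi\|$. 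Taking the product over $\ell \in [c]$ gives
\[ \tilde{f}_H(\Phi) \leq \left(\frac{N(1 + \|\Phi\|_{\ell^\infty})}{\|\Phi\|}\right)^c \|\Phi\|^m. \]

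Finally, I would combine the two displays and use $c \leq 2d - 2m + |\conn(G)|$. Since the base $N(1 + \|\Phi\|_{\ell^\infty})/\|\Phi\|$ of the outer factor is at least $1$ in any nontrivial case (and if it is not, one may just bound by $1$ at each step, noting only that $c \geq 1$ provides a safety factor), increasing the exponent from $c$ to $2d - 2m + |\conn(G)|$ is valid. Multiplying in the $(1 + \|\Phi\|_{\ell^\infty})^{2d - 2m}$ factor from the edge deletion and absorbing it into $(1 + \|\Phi\|_{\ell^\infty})^2$ inside each of the $2d - 2m + |\conn(G)| \geq 2d - 2m$ copies of the outer factor then gives the claimed bound.

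The argument is essentially bookkeeping given the three earlier propositions, so there is no substantive obstacle; the only delicate step is verifying that the path/cycle bounds can be written uniformly in the shape $\alpha \|\Phi\|^k$ with $\alpha \leq N(1 + \|\Phi\|_{\ell^\infty})/\|\Phi\|$, so that multiplying $c$ of them produces exactly the desired $\|\Phi\|^m$ with a clean power of the outer factor. A minor point to handle carefully is the case when $N(1+\|\Phi\|_{\ell^\infty})/\|\Phi\| < 1$, where one should either argue the bound from each component directly or observe that the target inequality only becomes easier since the outer factor is raised to a larger exponent than strictly needed.
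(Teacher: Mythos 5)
Your proposal is correct and follows essentially the same route as the paper: delete at most $2d-2m$ edges at cost $(1+\|\Phi\|_{\ell^\infty})^{2d-2m}$, factor $\tilde{f}_H$ over the resulting path/cycle components, bound each by $\bigl(N(1+\|\Phi\|_{\ell^\infty})/\|\Phi\|\bigr)\|\Phi\|^{|V(C)|}$, and then enlarge the exponent from $|\conn(H)|$ to $2d-2m+|\conn(G)|$ before absorbing the leftover factor. Your worry about the base being less than $1$ never materializes, since $\|\Phi\| \leq N\|\Phi\|_{\ell^\infty}$ always forces $N(1+\|\Phi\|_{\ell^\infty})/\|\Phi\| \geq 1$.
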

\begin{proof}
    Let $H$ be the graph formed from $G$ by deleting at most $2d - 2m$ edges to leave a disjoint union of paths and cycles.
    For any connected component $C$ of $H$, by the above results we have
    \[ \tilde{f}_C(\Phi) \leq \left(\frac{N(1 + \|\Phi\|_{\ell^{\infty}})}{\|\Phi\|}\right) \|\Phi\|^{|V(C)|}. \]
    Combining the previous results and using the multiplicativity of $\tilde{f}_G$ over disjoint unions of graphs, we have
    \begin{align*}
        \tilde{f}_G(\Phi)
        &\leq (1 + \|\Phi\|_{\ell^{\infty}})^{2d - 2m} \tilde{f}_H(\Phi) \\
        &\leq (1 + \|\Phi\|_{\ell^{\infty}})^{2d - 2m} \prod_{C \in \conn(H)} \tilde{f}_C(\Phi) \\
        &\leq (1 + \|\Phi\|_{\ell^{\infty}})^{2d - 2m} \prod_{C \in \conn(H)} \left(\frac{N(1 + \|\Phi\|_{\ell^{\infty}})}{\|\Phi\|}\right) \|\Phi\|^{|V(C)|} \\
        &= (1 + \|\Phi\|_{\ell^{\infty}})^{2d - 2m} \left(\frac{N(1 + \|\Phi\|_{\ell^{\infty}})}{\|\Phi\|}\right)^{|\conn(H)|} \|\Phi\|^{m} \\
        &\leq (1 + \|\Phi\|_{\ell^{\infty}})^{2d - 2m} \left(\frac{N(1 + \|\Phi\|_{\ell^{\infty}})}{\|\Phi\|}\right)^{2d - 2m + |\conn(G)|} \|\Phi\|^{m} \\
        &\leq \left(\frac{N(1 + \|\Phi\|_{\ell^{\infty}})^2}{\|\Phi\|}\right)^{2d - 2m + |\conn(G)|} \|\Phi\|^{m},
    \end{align*}
    as claimed.
\end{proof}

Specifically, the following is the way in which we will apply the above ideas.
Recall that we have mentioned that our arguments will involve showing that some sums over general partitions are dominated by the contribution of the matchings.
To implement this proof idea, it will be useful to \emph{compare} the $\tilde{f}_{G(\rho)}$ for general partitions $\rho$ to the same terms for matchings.
The following precisely gives a way to do this.
\begin{lemma}
\label{lemma:term-bound}
    Suppose that $\rho \preceq \pi$ is such that $\pi \in \Match(\sI)$. Then,
    \[ \tilde{f}_{G(\rho)}(\Phi) \leq \left(\frac{N(1 + \|\Phi\|_{\ell^{\infty}})^2}{\|\Phi\|}\right)^{2d-2m + |\cyc(G(\pi))|} \|\Phi\|^{m} \]
\end{lemma}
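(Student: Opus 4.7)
The plan is to view $G(\rho)$ as obtained from $G(\pi)$ by identifying vertices, check that it satisfies the hypotheses of Corollary~\ref{cor:graph-sum-bound}, and read the stated bound off that corollary. Throughout, write $m \colonequals |\rho|$, which is the number of vertices of $G(\rho)$, and note that $G(\rho)$ has exactly $d$ edges (one for each pair $\{x,x'\}$ for $x \in [d]$), regardless of $\rho$.

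The first step is to verify that every vertex of $G(\rho)$ has degree at least $2$. Since $\pi \in \Match(\sI)$, every block of $\pi$ has size exactly $2$, so every vertex of $G(\pi)$ has degree exactly $2$, i.e.\ $G(\pi)$ is $2$-regular. Because $\rho \preceq \pi$ means that each block of $\rho$ is a union of one or more blocks of $\pi$, the graph $G(\rho)$ is obtained from $G(\pi)$ by identifying some sets of vertices (without removing any edges). Vertex identification never decreases the degree of a vertex, so every vertex of $G(\rho)$ has degree at least $2$.

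The second step is to bound $|\conn(G(\rho))|$. Since $G(\rho)$ comes from $G(\pi)$ by merging vertices (and no edge deletions or additions), two vertices that were already connected in $G(\pi)$ remain connected in $G(\rho)$; equivalently, each connected component of $G(\rho)$ is a union of one or more connected components of $G(\pi)$. Since $G(\pi)$ is $2$-regular, its connected components are precisely its cycles, so
\[ |\conn(G(\rho))| \leq |\conn(G(\pi))| = |\cyc(G(\pi))|. \]

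With these two ingredients in hand, Corollary~\ref{cor:graph-sum-bound} applied to $G = G(\rho)$ gives
\[ \tilde{f}_{G(\rho)}(\Phi) \leq \left(\frac{N(1 + \|\Phi\|_{\ell^{\infty}})^2}{\|\Phi\|}\right)^{2d - 2m + |\conn(G(\rho))|} \|\Phi\|^{m}. \]
To finish, I would note that the base of the exponent is at least $1$: indeed $\|\Phi\| \leq \|\Phi\|_F \leq N\|\Phi\|_{\ell^\infty} \leq N(1+\|\Phi\|_{\ell^\infty})^2$. Therefore the right-hand side is monotone increasing in the exponent, and replacing $|\conn(G(\rho))|$ by the larger upper bound $|\cyc(G(\pi))|$ from the previous step yields the desired inequality. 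No step here looks delicate; the only thing to be a bit careful about is the monotonicity argument at the very end, which is exactly where the form $(1 + \|\Phi\|_{\ell^\infty})^2$ (rather than something smaller) earns its keep.
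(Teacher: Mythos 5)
Your proposal is correct and follows essentially the same route as the paper: apply Corollary~\ref{cor:graph-sum-bound} to $G(\rho)$ and use that vertex identification gives $|\conn(G(\rho))| \leq |\conn(G(\pi))| = |\cyc(G(\pi))|$. Your extra checks (that every vertex of $G(\rho)$ has degree at least $2$, and that the base $N(1+\|\Phi\|_{\ell^\infty})^2/\|\Phi\| \geq 1$ so the bound is monotone in the exponent) are details the paper leaves implicit, and they are verified correctly.
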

\begin{proof}
    The result is immediate from the previous one once we note that, since $G(\rho)$ is formed by identifying certain vertices of $G(\pi)$, we have $|\conn(G(\rho))| \leq |\conn(G(\pi))| = |\cyc(G(\pi))|$.
\end{proof}

\subsection{Computational Lower Bound}

Let us recall the assumptions of the theorem: we have
\begin{align*}
    D = D(N) &\ll \frac{\lambda_1(\Phi)^3}{N^2}, \\
    \|\Phi\|_{\ell^{\infty}} &\leq B,
\end{align*}
where $B$ is a constant independent of $N$.
Recall from our preliminary calculations in Section~\ref{sec:additive-gaussian} that we will in effect need to replace $\Phi$ by $\Phi^{\prime} \colonequals \Phi + \Diag(\Phi)$.
However, as the following relations express, both the entrywise $\ell^{\infty}$ and operator norms are not changed much by this modification:
\begin{align*}
    \lambda_1(\Phi) \leq \lambda_1(\Phi^{\prime}) &\leq \lambda_1(\Phi) + B, \\
    B \leq \|\Phi^{\prime}\|_{\ell^{\infty}} &\leq 2B.
\end{align*}

We take as a starting point our formula for the low-degree $\chi^2$-divergence in a general inhomogeneous spiked Wigner matrix model from Lemma~\ref{lem:low-deg-prelim}.
Let us denote
\[ u \colonequals x^1 \odot x^2, \]
where we recall that $x^1$ and $x^2$ have i.i.d.\ entries drawn from $\nu$.
Therefore, $u$ has i.i.d.\ entries drawn from $\nu^{(2)}$, the law of $xx^{\prime}$ for $x, x^{\prime} \sim \nu$.
Then, the Lemma says that we have
\[ \chi^2_{\leq D}(\P_N \mid \Q_N) = \sum_{d = 1}^D \frac{1}{d!} \E R^d = \sum_{d = 1}^D \frac{1}{d!} \left(\frac{\beta^2}{2N}\right)^d \E(u^{\top}\Phi^{\prime} u)^d. \]

Expanding the remaining expectations, we have
\begin{align*}
   \E(u^{\top}\Phi^{\prime} u)^d = \sum_{i_1, \dots, i_d, j_1, \dots, j_d \in [N]} \left(\prod_{k = 1}^d \Phi^{\prime}_{i_kj_k}\right) \cdot \E\left[\prod_{k = 1}^d u_{i_k}u_{j_k}\right]
\end{align*}
Rephrasing, let us introduce the indexing set
\[ \sI \colonequals \{1, 1', 2, 2', \dots, d, d'\}. \]
Then, we may write
\begin{align*}
   \E(u^{\top}\Phi^{\prime} u)^d 
   &= \sum_{i: \sI \to [N]} \left(\prod_{k = 1}^d \Phi^{\prime}_{i(k)i(k')}\right) \cdot \E\left[\prod_{k = 1}^d u_{i(k)}u_{i(k')}\right]
   \intertext{and, since $\nu^{(2)}$ is a symmetric distribution, any term where any element of $[N]$ occurs an odd number of times in the image of $i$ is zero and we may restrict}
   &= \sum_{\substack{i: \sI \to [N] \\ |i^{-1}(j)| \text{ is even for all } j \in [N]} } \left(\prod_{k = 1}^d \Phi^{\prime}_{i(k)i(k')}\right) \cdot \E\left[\prod_{k = 1}^d u_{i(k)}u_{i(k')}\right]
   \intertext{Equivalently, we may associate to each $i$ a partition $\rho$ of $\sI$ into even parts, where the elements of $\sI$ that $i$ maps to the same element of $[N]$ belong to the same block of $\rho$. In these terms, we can rewrite}
   &= \sum_{\rho \in \EvenPart(\sI)} \, \sum_{i: \rho \hookrightarrow [N]} \left(\prod_{k = 1}^d \Phi^{\prime}_{i(k)i(k')}\right) \cdot \prod_{A \in \pi} \E\left[u_1^{|A|}\right]
   \intertext{where we have used that the entries of $u$ are i.i.d. 
   Recall that $u_1$ has the law of $xx^{\prime}$ for $x, x^{\prime} \sim \nu$, and so by assumption $u_1$ is $\sigma^2$-subgaussian. In particular, by Proposition~\ref{prop:subgauss-tau}, there exists $\tau > 0$ such that its even moments are bounded by $\E u_1^{2k} \leq \tau^{2k - 2} (2k - 1)!!$. We may then expand and bound, noting that all terms appearing are non-negative,}
   &= \sum_{\rho \in \EvenPart(\sI)} \, \sum_{i: \rho \hookrightarrow [N]} \left(\prod_{k = 1}^d \Phi^{\prime}_{i(k)i(k')}\right) \cdot \prod_{A \in \rho} \tau^{2|A| - 2} (|A| - 1)!! \\
   &= \sum_{\rho \in \EvenPart(\sI)} \tau^{2d - 2|\rho|}  \prod_{A \in \rho} (|A| - 1)!! \sum_{i: \rho \hookrightarrow [N]} \prod_{k = 1}^d \Phi^{\prime}_{i(k)i(k')} \\
   &= \sum_{\rho \in \EvenPart(\sI)} \tau^{2d - 2|\rho|} \cdot \prod_{A \in \rho} (|A| - 1)!! \cdot f_{\rho}(\Phi^{\prime}) \numberthis \label{eq:general-comp-E-quad-form}
   \intertext{Now, note that the product of $(|A| - 1)!!$ precisely counts the number of matchings that are refinements of $\rho$. Therefore, we may reorganize}
   &= \sum_{\rho \in \EvenPart(\sI)} \sum_{\substack{\pi \in \Match(\sI) \\ \rho \preceq \pi}} \tau^{2d - 2|\rho|} f_{\rho}(\Phi^{\prime}) \\
   &= \sum_{\pi \in \Match(\sI)} \sum_{\substack{\rho \in \EvenPart(\sI) \\ \rho \preceq \pi}} \tau^{2d - 2|\rho|} f_{\rho}(\Phi^{\prime})
   \intertext{Here, our idea will be to show that the dominant contribution in each inner sum comes from $\pi$ itself. It turns out that we do not need to treat $\pi$ specially to see this; this fact is already contained in the estimate of Corollary~\ref{cor:graph-sum-bound}. We first rewrite}
   &= \sum_{\pi \in \Match(\sI)} \sum_{m = 1}^d \tau^{2d - 2m} \sum_{\substack{\rho \in \EvenPart_m(\sI) \\ \rho \preceq \pi}} f_{\rho}(\Phi^{\prime})
   \intertext{Where we may now bound by first bounding all $f$ by corresponding $\tilde{f}$ and then using Corollary~\ref{cor:graph-sum-bound}, setting $K \colonequals \frac{N(1 + \|\Phi^{\prime}\|_{\ell^{\infty}})^2}{\|\Phi^{\prime}\|}$ and $L \colonequals \tau^2(1 + \|\Phi^{\prime}\|_{\ell^{\infty}})^4$,}
   &\leq \sum_{\pi \in \Match(\sI)} \sum_{m = 1}^{d} \tau^{2d - 2m} \sum_{\substack{\rho \in \EvenPart_m(\sI) \\ \rho \preceq \pi}} \left(\frac{N(1 + \|\Phi^{\prime}\|_{\ell^{\infty}})^2}{\|\Phi^{\prime}\|}\right)^{2d - 2m + |\cyc(G(\pi))|} \|\Phi^{\prime}\|^m \\
   &= \sum_{\pi \in \Match(\sI)} K^{|\cyc(G(\pi))|} \sum_{m = 1}^{d} L^{d - m} \sum_{\substack{\rho \in \EvenPart_m(\sI) \\ \rho \preceq \pi}} N^{2d - 2m} \|\Phi^{\prime}\|^{3m - 2d} 
   \intertext{and, using that the number of $\rho$ with $m$ parts that are coarsenings of $\pi$ (which, being a matching, has $d$ parts) is equal to $|\Part_m([d])| = S_2(d, m)$, a Stirling number of the second kind, we can further rewrite upon recognizing the Touchard polynomials (Definition~\ref{def:touchard})}
   &= \left(\frac{LN^2}{\|\Phi^{\prime}\|^2}\right)^d\sum_{\pi \in \Match(\sI)} K^{|\cyc(G(\pi))|} \sum_{m = 1}^{d} S_2(d, m) \left(\frac{\|\Phi^{\prime}\|^3}{LN^2}\right)^m \\
   &= \left(\frac{LN^2}{\|\Phi^{\prime}\|^2}\right)^d T_d\left(\frac{\|\Phi^{\prime}\|^3}{LN^2}\right) \sum_{\pi \in \Match(\sI)} K^{|\cyc(G(\pi))|}  \\
   &\leq \left(\frac{LN^2}{\|\Phi^{\prime}\|^2}\right)^d \left(\frac{\|\Phi^{\prime}\|^3}{LN^2}\right)^d \exp\left(\frac{LN^2d^2}{\|\Phi^{\prime}\|^3}\right) \sum_{\pi \in \Match(\sI)} K^{|\cyc(G(\pi))|}  \tag{Corollary~\ref{cor:touchard-bound}} \\
   &= \|\Phi^{\prime}\|^d \exp\left(\frac{LN^2d^2}{\|\Phi^{\prime}\|^3}\right)  \sum_{\pi \in \Match(\sI)} K^{|\cyc(G(\pi))|}  \\
   &= (2\|\Phi^{\prime}\|)^d \exp\left(\frac{LN^2d^2}{\|\Phi^{\prime}\|^3}\right) \frac{\Gamma(d + \frac{K}{2})}{\Gamma(\frac{K}{2})} \tag{Corollary~\ref{cor:moment}}
\end{align*}

Finally, we return to the original sum we wanted to bound.
We have
\begin{align*}
    \chi^2_{\leq D}(\P_N \mid \Q_N)
    &= \sum_{d = 1}^D \frac{1}{d!} \left(\frac{\beta^2}{2N}\right)^d \E(u^{\top}\Phi^{\prime} u)^d \\
    &\leq \sum_{d = 1}^D \frac{\Gamma(d + \frac{K}{2})}{d! \cdot \Gamma(\frac{K}{2})} \left(\beta^2 \cdot \frac{\|\Phi^{\prime}\|}{N} \cdot \exp\left(\frac{LN^2 D}{\|\Phi^{\prime}\|^3}\right)\right)^d 
    \intertext{By assumption we have, as $N \to \infty$, that $L = O(1)$ and $\frac{N^2 D}{\|\Phi^{\prime}\|^3} \leq \frac{N^2 D}{\|\Phi\|^3} \to 0$, whereby for any $\eta > 0$, for sufficiently large $N$, we will have $\exp(\frac{LN^2 D}{\|\Phi^{\prime}\|^3}) \leq 1 + \eta$.
    Also, we have $\limsup_{N \to \infty} \beta^2 \cdot \frac{\|\Phi\|^{\prime}}{N} \leq \limsup_{N \to \infty} \beta^2 \cdot (\frac{\|\Phi\|}{N} + \frac{\|\Phi\|_{\ell^{\infty}}}{N}) \leq 1 - 2\epsilon$ for some $\epsilon > 0$. Let us choose $\eta$ such that $(1 - 2\epsilon)(1 + \eta) \leq 1 - \epsilon$. For sufficiently large $N$, we may bound}
    &\leq \sum_{d = 1}^D \frac{\Gamma(d + \frac{K}{2})}{d! \cdot \Gamma(\frac{K}{2})} \left(\beta^2 \cdot \frac{\|\Phi^{\prime}\|}{N} \cdot (1 + \eta)\right)^d \\
    &\leq \sum_{d = 0}^{\infty} \frac{\Gamma(d + \frac{K}{2})}{d! \cdot \Gamma(\frac{K}{2})} \left(\beta^2 \cdot \frac{\|\Phi^{\prime}\|}{N} \cdot (1 + \eta)\right)^d \\
    &= \left(1 - \beta^2 \cdot \frac{\|\Phi^{\prime}\|}{N} \cdot (1 + \eta)\right)^{-K/2} \tag{Proposition~\ref{prop:hypergeometric_sum}} \\
    &\leq \exp\left(\frac{K}{2} \cdot \beta^2 \cdot \frac{\|\Phi^{\prime}\|}{N} \cdot (1 + \eta) \cdot \frac{1 + \epsilon}{\epsilon}\right) \tag{Proposition~\ref{prop:log-bound}}
    \intertext{Finally, expanding the definition of $K$ again gives a last cancellation, leaving us with}
    &= \exp\left(\frac{\beta^2}{2} \cdot (1 + \|\Phi^{\prime}\|_{\ell^{\infty}})^2 \cdot (1 + \eta) \cdot \frac{1 + \epsilon}{\epsilon}\right) \\
    &= O(1),
\end{align*}
completing the proof, since all factors above are either constant parameters of the model, constants introduced earlier in the proof, or, for $\|\Phi^{\prime}\|_{\ell^{\infty}} \leq 2\|\Phi\|_{\ell^{\infty}}$, assumed to be $O(1)$ as $N \to \infty$.

\subsubsection{Gaussian Intuition and Symmetry Assumption}

Let us give some intuition about the part of the above manipulations where we insistently seek out the Touchard polynomials.
We have in mind that the quadratic form $u^{\top} \Phi^{\prime} u \approx u^{\top}\Phi u$ behaves as though $u$ were a standard Gaussian vector.
In this case, by rotational invariance, we could diagonalize $\Phi$ without loss of generality, and, keeping only the contribution of the largest eigenvalue $\lambda_1(\Phi)$, we would expect to have $\E (u^{\top}\Phi^{\prime} u)^d \approx \lambda_1(\Phi)^d \E h^{d}$ for $h \sim \chi^2(1)$.

When we identify that Poisson moments (Touchard polynomials) appear in our calculation, we are aiming to implement this intuition by bounding them by the corresponding moments of the $\chi^2$ distribution.
Indeed, using that $\chi^2(d) = \Gamma(d / 2, 2)$ for the family of Gamma distributions, we may embed the $\chi^2$ distributions into a continuous family, and we have observed numerically that, for all $\lambda > 0$ and $d \geq 1$,
\[ \Ex_{X \sim \Pois(\lambda)} X^d \leq \Ex_{X \sim \Gamma\left(\frac{\lambda}{2}, 2\right)} X^d, \]
both sides of which may be evaluated as polynomials in $\lambda$ for any given $d$.
We have not been able to establish this intriguing empirical inequality, but fortunately Corollary~\ref{cor:touchard-bound} gives a sufficient approximation for our purposes.

This circle of ideas is also the reason that the symmetry assumption on the distribution $\nu$ is important to our argument.
Without it, we would not be able to perform the crucial regrouping of a sum over $\rho \in \EvenPart(\sI)$ into one over $\pi \in \Match(\sI)$ with $\pi \succeq \rho$, which in turn is what eventually gives rise to the Stirling numbers and Touchard polynomials.
While this approach seems exceedingly algebraically delicate, and we suspect that the symmetry condition could be removed, we have not yet been able to find an alternative approach to the one taken here that is sufficiently precise.

\subsection{Statistical Lower Bound}

We repeat the beginning of the previous proof, but make some changes starting with \eqref{eq:general-comp-E-quad-form} above.
Using the extra assumption on moments of $\nu$, we find that this equation holds without the term $\tau^{2d - 2|\rho|}$, so we merely have
\begin{align*}
    \E(u^{\top}\Phi^{\prime} u)^d 
    &\leq \sum_{\rho \in \EvenPart(\sI)} \prod_{A \in \rho} (|A| - 1)!! \cdot f_{\rho}(\Phi^{\prime}) 
    \intertext{Using as we did in the previous proof that the combinatorial factor here is precisely the number of $\pi \in \Match(\sI)$ such that $\rho \preceq \pi$, we may bound by the dramatically simpler}
    &= \sum_{\pi \in \Match(\sI)} \sum_{\rho \preceq \pi} f_{\rho}(\Phi^{\prime}) \\
    &= \sum_{\pi \in \Match(\sI)} \tilde{f}_{\pi}(\Phi^{\prime}) \tag{Proposition~\ref{prop:partition-rearrangement}}
    \intertext{and setting $K \colonequals N\|\Phi^{\prime}\|_{\ell^{\infty}} / \lambda_1(\Phi^{\prime})$, we have}
    &\leq \lambda_1(\Phi^{\prime})^d \sum_{\pi \in \Match(\sI)} K^{|\cyc(G(\pi))|} \cdot \tag{Proposition~\ref{prop:cycles}} \\
    &= (2\lambda_1(\Phi^{\prime}))^d \frac{\Gamma(d + \frac{K}{2})}{\Gamma(\frac{K}{2})}, \tag{Corollary~\ref{cor:moment}}
\end{align*}
reaching a simplified version of our final bound on this expectation from the previous proof.

\begin{remark}
    The first bound above would hold with equality if the vector $u$ consisted of i.i.d.\ entries distributed as $\sN(0, 1)$, as may be verified by applying Wick's formula.
    This observation is the motivation for our approach to these calculations and to the form of our assumption on the moments of $\nu$ for this proof.
    Note, however, that $u = x^1 \odot x^2$ for two draws of $x^i$ from the distribution of our prior $\sP_N$, so this intuition is \emph{not} saying that the above calculation behaves as it would for $\sP_N = \sN(0, I_N)$ (to which case this proof does not apply).
    Rather, the idea is that a situation like $\sP_N = \Unif(\{\pm 1\}^N)$, in which case the law of $u$ is again $\Unif(\{\pm 1\}^N)$, should resemble the case where $u \sim \sN(0, I_N)$.
\end{remark}

Repeating the same manipulations from the previous proof, we find, letting $\epsilon > 0$ be such that $\limsup_{N \to \infty} \beta^2 \cdot \frac{\lambda_1(\Phi^{\prime})}{N} \leq 1 - \epsilon$,
\begin{align*}
    \chi^2(\P_N \mid \Q_N)
    &\leq \left(1 - \beta^2 \cdot \frac{\lambda_1(\Phi^{\prime})}{N}\right)^{-K / 2} \\
    &\leq \exp\left(\frac{K}{2} \cdot \beta^2 \cdot \frac{\lambda_1(\Phi^{\prime})}{N} \cdot \frac{1 + \epsilon}{\epsilon}\right) \\
    &= \exp\left(\frac{\|\Phi^{\prime}\|_{\ell^{\infty}}}{2} \cdot \beta^2 \cdot \frac{1 + \epsilon}{\epsilon}\right) \\
    &= O(1),
\end{align*}
completing the proof.

\subsection{Growth of Low-Degree \texorpdfstring{$\chi^2$-}{Chi-Squared }Divergence: Proof of Theorem~\ref{thm:general-low-deg-div}}
\label{sec:pf:thm:general-low-deg-div}

We follow the same general approach as in the proof of Theorem~\ref{thm:block-low-deg-div} in Section~\ref{sec:pf:thm:block-low-deg-div}.
As we argued there, it suffices to show that
\[ \E \exp^{\leq D}(R^{(0)}) = \omega(1), \]
for
\[ R^{(0)} = \frac{\beta^2}{2N} u^{\top}\Phi u, \]
where $u = x^1 \odot x^2$ for $x^1, x^2 \sim \sP_N$ two independent draws from the signal distribution.

The main difference is that, in Theorem~\ref{thm:block-low-deg-div}, we could use the block structure of $\Phi$ to group this sum into a finite number of sums to each of which the central limit theorem would apply.
Here, we cannot do that, but we take a similar approach with a generalized central limit theorem.
Let $\mu_1 \geq \cdots \geq \mu_N$ be the ordered eigenvalues of $\Phi^{(N)}$, and $v_1, \dots, v_N$ be the associated unit eigenvectors.
We may then rewrite
\[ R^{(0)} 
= \frac{\beta^2}{2N} \sum_{i = 1}^N \mu_i \langle v_i, u\rangle^2 \]

We now claim that we have the convergence in distribution
\[ (\langle v_1, u \rangle, \dots, \langle v_K, u \rangle) \dto \sN(0, I_K), \]
for any $K$ fixed as $N \to \infty$.
Recall that we assume either that $\Phi^{(N)} \succeq 0$, in which case we will use this with $K = 1$, or that $\Phi^{(N)}$ has bounded rank, in which case we will take $K$ to be that bound.
By Theorem~\ref{thm:cramer-wold}, it suffices to show that, for any $c \in \R^K$ with $\|c\| = 1$, we have
\begin{equation}
\label{eq:clt-cond}
\left\langle \sum_{i = 1}^K c_i v_i, u \right\rangle \dto \sN(0, 1). 
\end{equation}
Recall that we have assumed that
\[ \lim_{N \to \infty} \max_{i = 1}^{N} \|v_i\|_{\infty} = 0. \]
So, we also have
\[ \lim_{N \to \infty} \left\|\sum_{i = 1}^{K} c_i v_i\right\|_{\infty} = 0. \]
Thus, since the entries of $u$ are i.i.d., bounded, and have mean zero and variance 1, the convergence in \eqref{eq:clt-cond} follows by the Lindeberg central limit theorem.
Let us write $(g_1, \dots, g_K) \sim \sN(0, I_K)$ for this distributional limit.

We also have in general that
\begin{align*}
    \E \exp^{\leq D}(R^{(0)})
    &= \sum_{d = 0}^D \frac{1}{d!} \E R^{(0)^d}
    \intertext{and every term here is non-negative. Further, since $D = D(N) \to \infty$ by assumption, for any given $d_0 \geq 1$ we will eventually have $D \geq d_0$ for sufficiently large $N$, and so we can bound}
    &\geq \frac{1}{d_0!} \E R^{(0)^{d_0}}.
\end{align*}

We now consider our two possible assumptions on $\Phi^{(N)}$ separately.
If $\Phi^{(N)} \succeq 0$, then $R^{(0)} \geq 0$ almost surely, and may bound by just the term of the top eigenvalue $\mu_1$,
\[ R^{(0)} \geq \frac{\beta^2}{2N}\mu_1 \langle v_1, u \rangle^2 \dto \frac{\beta^2 \what{\mu}_1}{2} g_1^2. \]
As in the proof of Theorem~\ref{thm:block-low-deg-div}, a standard approximation argument implies that this convergence also holds for moments, and thus
\begin{align*}
    \E \exp^{\leq D}(R^{(0)})
    &\geq \frac{1}{d_0!} \E R^{(0)^{d_0}} \\
    &\geq \frac{1}{d_0!} \E \left(\frac{\beta^2}{2N}\mu_1 \langle v_1, u \rangle^2\right)^{d_0} \\
    &\to \frac{1}{d_0!} \E \left(\frac{\beta^2\what{\mu}_1}{2} g_1^2\right)^{d_0} \\
    &= \frac{(2d_0 - 1)!!}{(2d_0)!!} (\beta^2\what{\mu}_1)^{d_0}.
\end{align*}
Stirling-type approximations imply that the first term is polynomial in $d$, while by assumption $\beta^2 \what{\mu}_1 > 1$, so taking $d_0 \to \infty$ gives a diverging lower bound.

Now, suppose we instead have the second possible condition on $\Phi^{(N)}$, that $\rank(\Phi^{(N)}) \leq K$ and $\lambda_i(\Phi^{(N)}) / N \to \what{\mu}_i$ for each $i = 1, \dots, K$.
Then, we have that $R^{(0)}$ itself converges in distribution:
\[ R^{(0)} = \frac{\beta^2}{2N}\sum_{i = 1}^K\mu_i \langle v_i, u \rangle^2 \dto \frac{\beta^2}{2}\sum_{i = 1}^K \what{\mu}_i g_i^2. \]
By the same argument as above, we have
\begin{align*}
    \liminf_{N \to \infty} \exp^{\leq D}(R^{(0)})
    &\geq \frac{1}{d_0!} \E\left(\frac{\beta^2}{2}\sum_{i = 1}^K \what{\mu}_i g_i^2\right)^{d_0} \\
    &= \frac{1}{d_0!} \E\left(\frac{\beta^2}{2}g^{\top} D g\right)^{d_0}
    \intertext{for $D = \Diag(\what{\mu}_1, \dots, \what{\mu}_K)$.
    Note that, for any $\ell \geq 1$, $\Tr(D^{\ell}) = \lim_{N \to \infty} \Tr(\Phi^{(N)} / N)^k \geq 0$ since all entries of $\Phi^{(N)}$ are non-negative.
    Thus, we may apply Proposition~\ref{prop:baopaper}, which gives}
    &\geq \frac{1}{d_0!} \left(\frac{\beta^2}{2}\right)^{d_0} 2^{d_0 - 1} (d_0 - 1)! \Tr(D^{d_0}) \intertext{and, provided we take $d_0$ even,}
    &\geq \frac{1}{2d_0} (\beta^2 \what{\mu}_1)^{d_0},
\end{align*}
and the proof concludes after taking $d_0 \to \infty$ as in the first case.

\subsection{Numerical Experiment}
\label{sec:numerical}

While our results suggest that the computational threshold for general variance profiles of noise is given by a straightforward extension of the formula for the threshold for block-structured profiles, they do not say anything about whether the spectral algorithm studied by \cite{pak2024optimal,MKK-2024-OptimalPCABlockSpikedMatrix,BCSVH-2024-MatrixConcentrationFreeProbability2} for block-structured profiles should still be effective for general profiles.
Still, the formal similarity between the thresholds does suggest the intriguing possibility that the spectral algorithm using $\sH(Y)$ might remain optimal for general variance profiles.
Here we give some numerical evidence that, at least, that spectral algorithm is superior to two other natural choices.
The theoretical discussion in this section is all at a heuristic level and is meant merely to justify the setup of our experiment and perhaps to suggest possible explanations of the results.

\subsubsection{Alternative Algorithms and Predicted BBP Thresholds}

The two alternative algorithms we consider (also discussed by \cite{GKKZ-2025-InhomogeneousSpikedWigner,MKK-2024-OptimalPCABlockSpikedMatrix}) are as follows.
The first is just to consider the top eigenpair of $\what{Y} = Y / \sqrt{n}$ itself.
We call this the \emph{direct} spectral algorithm, and denote its BBP threshold (i.e., the value of $\beta$ at which the transition from Theorems~\ref{thm:bbp} and \ref{thm:bbp-inhom} occurs) by $\beta_*^{\dir}$.
The second is to consider the different and more directly intuitive modification of $Y$,
\[ \sG(Y) \colonequals \frac{1}{\sqrt{N}} \Delta^{\odot -1/2} \odot Y, \]
which divides each entry of $Y$ by the standard deviation of the noise applied to that entry.
We call this the \emph{whitening} spectral algorithm, because it makes the ``noise term'' of $\sG(Y)$ i.i.d.\ $\sN(0, 1)$, in exchange for modifying the ``signal term'' in a possibly complicated way.
We call the BBP threshold of this algorithm $\beta_*^{\wh}$.
Finally, we call the \emph{linearized AMP} or \emph{LinAMP} spectral algorithm the one using $\sH(Y)$, and call its BBP threshold $\beta_*^{\LinAMP}$. 

Let us assume for the sake of simplicity that the signal $x$ in our model has law $x \sim \Unif(\{\pm 1\}^n)$, i.e., we take the entrywise distribution $\nu = \Unif(\{\pm 1\})$ in the notation of the main results.
We now describe the values we expect each of these three thresholds to take.
To avoid needing to carefully prescribe what sequences of variance profiles ``converge'' in a suitable sense, we keep the discussion entirely heuristic and make non-asymptotic predictions about these thresholds.

First, from our results in this paper, we are led to hope for the LinAMP spectral algorithm that
\begin{equation}
\label{eq:bbp-thresh-opt}
\beta_*^{\LinAMP} \approx \sqrt{\frac{N}{\lambda_1(\Phi)}} = \sqrt{\frac{N}{\lambda_1(\Delta^{\odot -1})}}. 
\end{equation}
Second, for the whitening spectral algorithm, note that
\[ \sG(Y) = \frac{1}{\sqrt{N}} \beta \Delta^{\odot -1/2} \odot xx^{\top} + W^{(0)} = \frac{1}{\sqrt{N}} \beta D_x  \Delta^{\odot -1/2} D_x + W^{(0)} \]
where the entries of $W^{(0)}$ on and above the diagonal are i.i.d.\ with distribution $\sN(0, 1)$.
This is a spiked matrix model with homogeneous noise but where the signal part $\frac{1}{\sqrt{N}} \beta D_x \Delta^{\odot -1/2} D_x$ does not necessarily have low rank.
Conveniently, by our assumption on $x$ we have that $D_x$ is an orthogonal matrix, so the eigenvalues of this signal part up to rescaling are those of $\Delta^{\odot -1/2}$.
Provided this matrix is approximately low rank, per the analysis of, for example, \cite{CDMF-2009-DeformedWigner}, we expect the threshold
\begin{equation}
\label{eq:bbp-thresh-wh}
\beta_*^{\wh} \approx \frac{N}{\lambda_1(\Delta^{\odot -1/2})}.
\end{equation}
Note that \cite{GKKZ-2025-InhomogeneousSpikedWigner} show that, if we take the above as definitions of $\beta_*^{\wh}$ and $\beta_*^{\LinAMP}$, then for any variance profile $\Delta$ we have $\beta_*^{\wh} \geq \beta_*^{\LinAMP}$, with equality if and only if $\Delta$ is constant.

Finally, we propose that a reasonable proxy for the BBP threshold of the original inhomogeneous model $\what{Y}$ is to apply the theory for spiked matrix models with orthogonally invariant noise, in particular the results of \cite{BGN-2011-PerturbationsRandomMatrices}.
(The more involved analysis of \cite{bigot2021freeness} also applies to this question, but we take this more heuristic approach to lighten the numerical computations required.)
While our matrix $W$ is not orthogonally invariant, if for instance we instead had $x$ drawn uniformly from the sphere of radius $\sqrt{N}$, then we could consider $Q \what{Y} Q^{\top}$ instead of $\what{Y}$ for $Q$ a Haar-distributed orthogonal matrix and use that $QWQ^{\top}$ is then orthogonally invariant.
In any case, writing $\what{W} \colonequals \what{W} / \sqrt{N}$, the results of \cite{BGN-2011-PerturbationsRandomMatrices} describe the BBP threshold for such $\what{Y}$ in terms of the limiting empirical eigenvalue distribution of $\what{W}$ over a suitably converging sequence.
Showing that certain sequences of matrices with variance profiles have these distributions converging is itself non-trivial, so let us just suppose we believe this distribution to be some compactly supported $\gamma$.
If $b$ is the right edge of the support of $\gamma$, then \cite{BGN-2011-PerturbationsRandomMatrices} yields the prediction
\begin{equation}
\label{eq:bbp-thresh-dir}
\beta_*^{\dir} \approx \frac{1}{\lim_{z \to b^+} G(z)},
\end{equation}
where $G(z)$ is the \emph{Cauchy transform} of $\gamma$, given by
\[ G(z) = \Ex_{x \sim \gamma} \left[\frac{1}{z - x}\right]. \]

For our purposes here, given a specific variance profile matrix $\Delta$, we would like to compute these three thresholds to compare with numerical results.
While for the LinAMP and whitening algorithms \eqref{eq:bbp-thresh-opt} and \eqref{eq:bbp-thresh-wh} respectively give simple such formulas, it is unclear how to compute with \eqref{eq:bbp-thresh-dir}, since we do not have access to the measure $\gamma$ or the number $b$.
Given a concrete $\Delta$ and $W$ sampled from $\Delta$, we will simply approximate
\[ b \approx \lambda_1(\what{W}). \]
Then, following the heuristic derivation discussed, for instance, on the first pages of \cite{ajanki2019quadratic}, we use that $G(z)$ may be estimated by first solving the recursion
\[ -\frac{1}{m_i} = z + \frac{1}{N} \sum_{j = 1}^N \Delta_{ij} m_j = z + \frac{1}{N}(\Delta m)_i \]
for a vector $m = (m_1, \dots, m_N) \in \R^N$, and then taking the approximation
\[ G(z) \approx -\frac{1}{N} \sum_{i = 1}^N m_i. \]
See the citation above for a justification of this procedure involving approximating the resolvent matrix of $\what{W}$.
We then estimate $\beta_*^{\dir}$ by performing this procedure (concretely, we obtain $m$ by fixed point iterations of the above equations) with $z$ equal to our estimate of $b$, $z \colonequals \lambda_1(\what{W})$.

\subsubsection{Empirical Results for a Continuous Variance Profile}

Now let us propose a specific non-block-structured variance profile and describe the above predicted thresholds for the three spectral algorithms.
In general, a widely studied type of variance profile in random matrix theory (though, to the best of our knowledge, not yet in the context of spiked matrix models) is a profile of the form
\begin{equation}
\label{eq:smooth-profile}
\Delta_{ij} \colonequals f\left(\frac{i}{N}, \frac{j}{N}\right)
\end{equation}
for a continuous symmetric function $f: [0, 1]^2 \to \R_{\geq 0}$ (see the discussion in Section~\ref{sec:related}).
If $f$ is also bounded away from zero, then such a variance profile will satisfy the condition \eqref{eq:uniform-profile-bounds} discussed earlier, and our Theorem~\ref{theorem:general-bound} will apply (provided that Assumption~\ref{assumptionA} holds, which also may be shown to hold for this family of choices).
Such variance profiles are also the context in which the description of $\gamma$ mentioned above has been shown to hold; they are convenient for the above description since $\frac{1}{N}\sum_{j = 1}^N \Delta_{ij} m_j$ then becomes a Riemann sum if $m_j = m(j / N)$ is associated to another function.

\begin{figure}
    \begin{center}
    \includegraphics[scale=0.6]{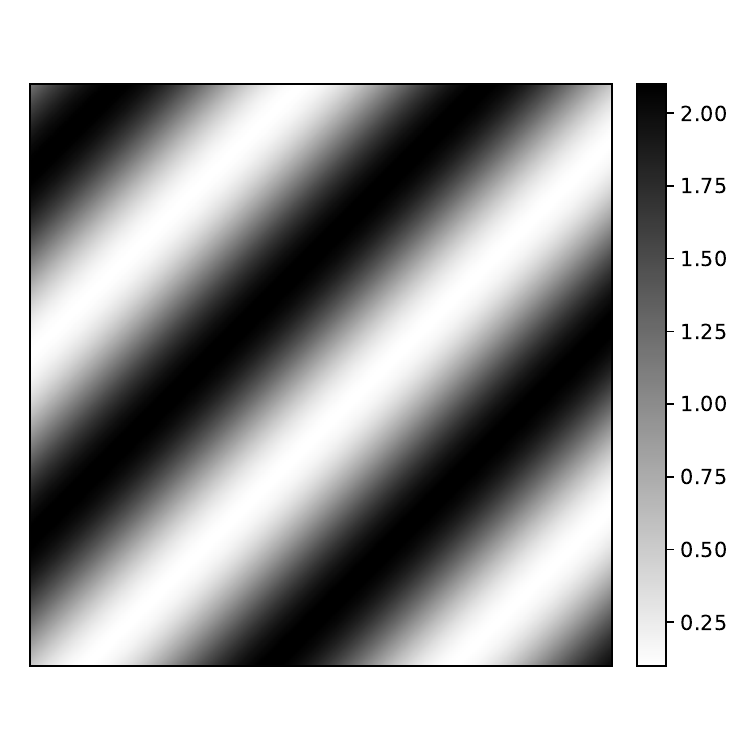}
    \end{center}
    \vspace{-0.8cm}
    \caption{An illustration of the variance profile matrix $\Delta$, plotted as a heatmap of an $N \times N$ matrix with $N = \num{12000}$, defined by \eqref{eq:smooth-profile} with the choice of the function $f$ given in \eqref{eq:profile-f}. For such large $N$, this heatmap is visually indistinguishable from a plot of the continuous function $f$ itself on the domain $[0, 1]^2$.}
    \label{fig:profile}
\end{figure}

We will consider such a variance profile for the function
\begin{equation}
\label{eq:profile-f}
f(x, y) = \frac{11}{10} + \sin(10(x + y)).
\end{equation}
Note that we indeed have the uniform bounds $1/10 \leq f(x, y) \leq 21 / 10$.
We illustrate the variance profile associated to such an $f$ in Figure~\ref{fig:profile}.
Performing the above calculations for this variance profile evaluated with $N = \num{12 000}$ gives
\begin{align*}
    \beta_*^{\LinAMP} &\approx \num{0.653}, \\
    \beta_*^{\wh} &\approx \num{0.759}, \\
    \beta_*^{\dir} &\approx \num{1.158}.
\end{align*}
In Figure~\ref{fig:histograms}, we show eigenvalue distributions of the matrices constructed by each algorithm for a sequence of values
\[ \beta_1 < \beta_*^{\LinAMP} < \beta_2 < \beta_*^{\wh} < \beta_3 < \beta_*^{\dir} < \beta_4. \]
Specifically, we take
\begin{align*}
    \beta_1 &= \num{0.65}, \\
    \beta_2 &= \num{0.75}, \\
    \beta_3 &= \num{0.85}, \\
    \beta_4 &= \num{1.25}.
\end{align*}
We observe that, as expected, for $\beta = \beta_1$ none of the matrices has an outlier eigenvalue, for $\beta = \beta_2$ only the LinAMP algorithm's matrix has an outlier eigenvalue, for $\beta = \beta_3$ only the LinAMP and 
whitening algorithm's matrices have outlier eigenvalues, and for $\beta = \beta_4$ all three matrices have outlier eigenvalues.

\begin{remark}
    Note that, as is visible in the figure, the bulk distribution of eigenvalues for the direct and whitening algorithms' matrices $\what{Y}$ and $\sG(Y)$ does not depend on $\beta$, while the bulk distribution for the LinAMP algorithm's matrix $\sH(Y)$ does.
    This is just because the construction of $\sH(Y)$ presumes access to $\beta$ and includes $\beta$ in its very definition; see \eqref{eq:tildeY}.
\end{remark}

We emphasize the finding that, at least on this particular variance profile, the LinAMP algorithm is indeed superior to the other two, and its performance is consistent with the predicted threshold $\beta_*^{\LinAMP}$.
We leave it as an intriguing open question to determine whether the actual BBP threshold for $\sH(Y)$ coincides with our $\beta_*$ parameter more generally, to understand whether a spectral algorithm using $\sH(Y)$ is indeed optimal for many more variance profiles (and if so, under what conditions on variance profiles) or whether this matrix construction needs to be modified further in the non-block-structured case.

\begin{figure}
\begin{center}
    \begin{tabular}{cccc}
    & Direct & Whitening & Linearized AMP \\[0.65em]
    \rotatebox{90}{\hspace{1.4cm}$\beta = \beta_1$} & \includegraphics[scale=0.45]{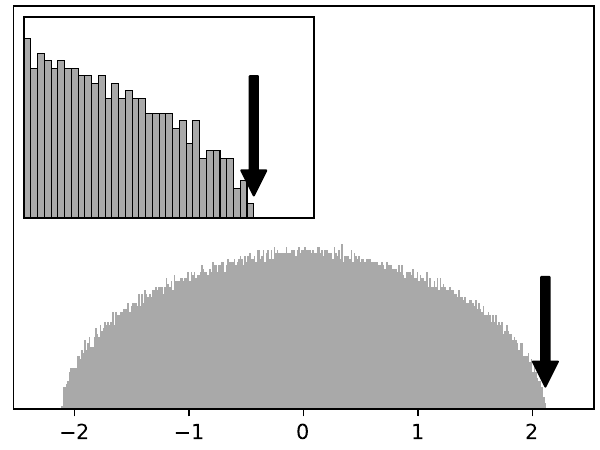} & \includegraphics[scale=0.45]{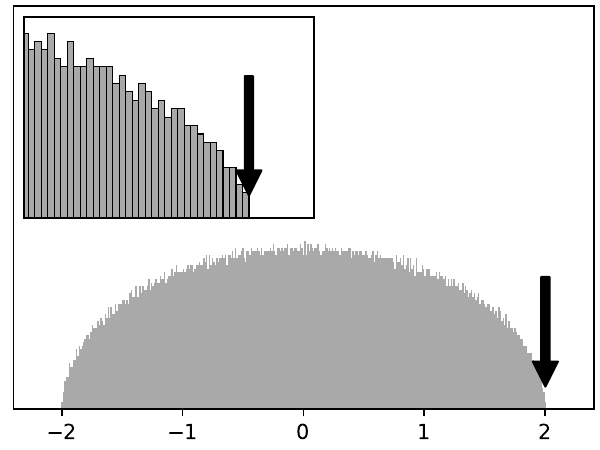} & \includegraphics[scale=0.45]{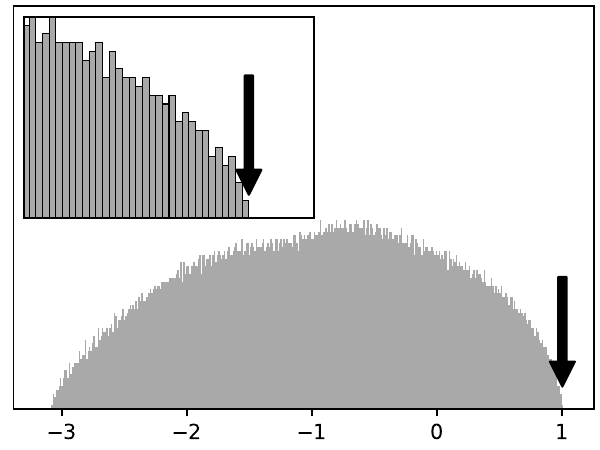} \\
    \rotatebox{90}{\hspace{1.4cm}$\beta = \beta_2$} & \includegraphics[scale=0.45]{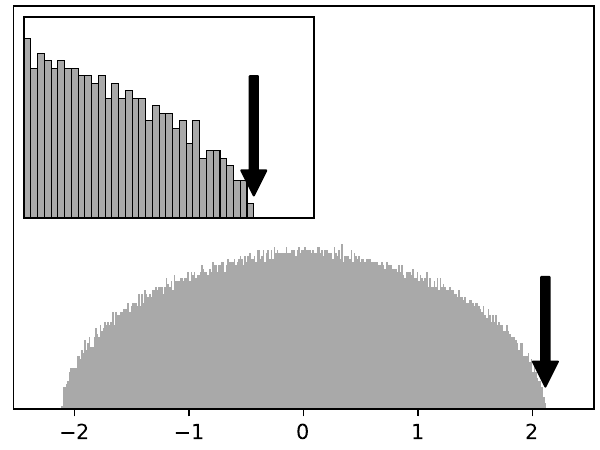} & \includegraphics[scale=0.45]{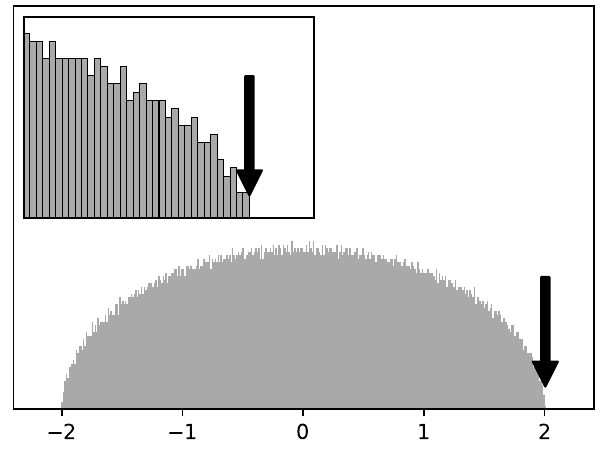} & \includegraphics[scale=0.45]{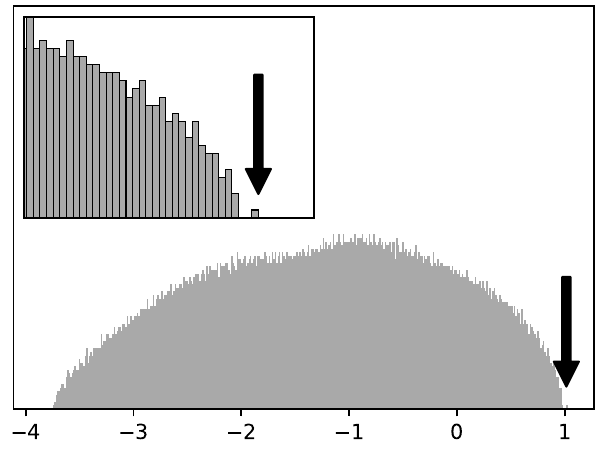} \\
    \rotatebox{90}{\hspace{1.4cm}$\beta = \beta_3$} & \includegraphics[scale=0.45]{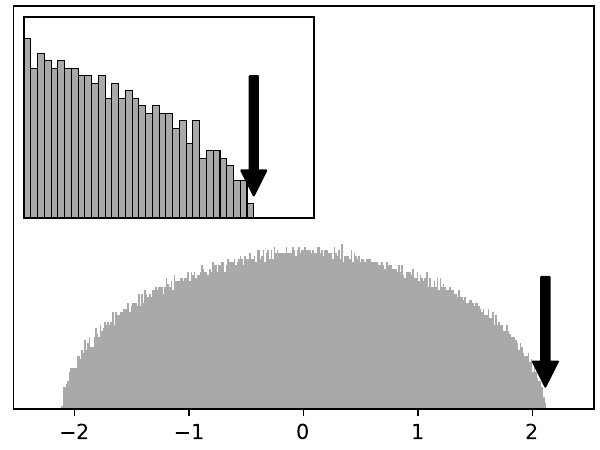} & \includegraphics[scale=0.45]{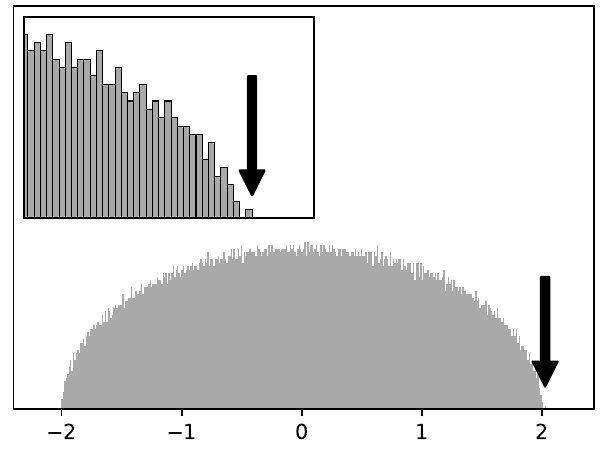} & \includegraphics[scale=0.45]{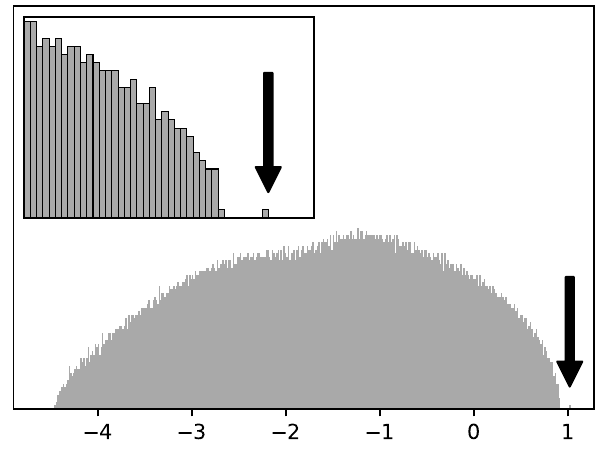}  \\
    \rotatebox{90}{\hspace{1.4cm}$\beta = \beta_4$} & \includegraphics[scale=0.45]{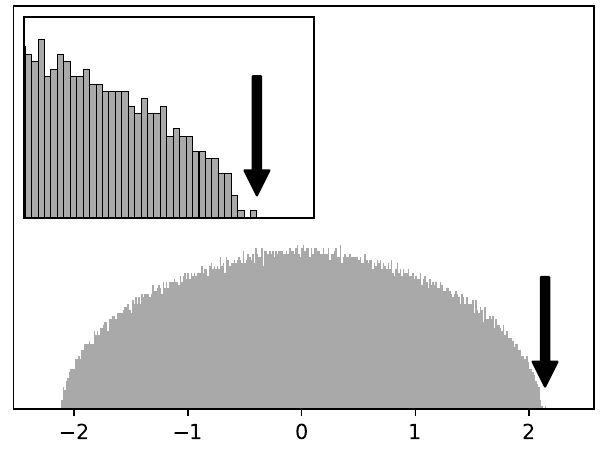} & \includegraphics[scale=0.45]{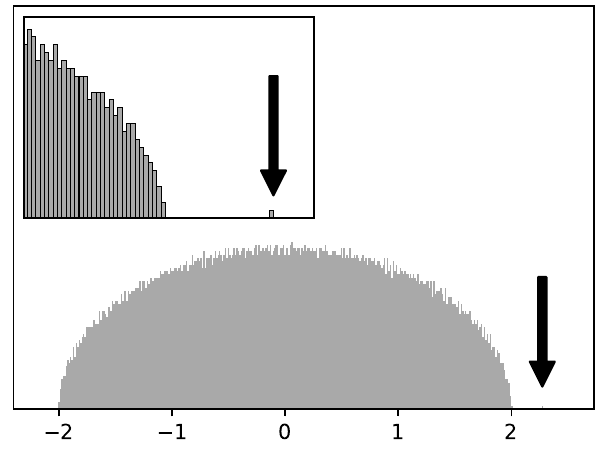} & \includegraphics[scale=0.45]{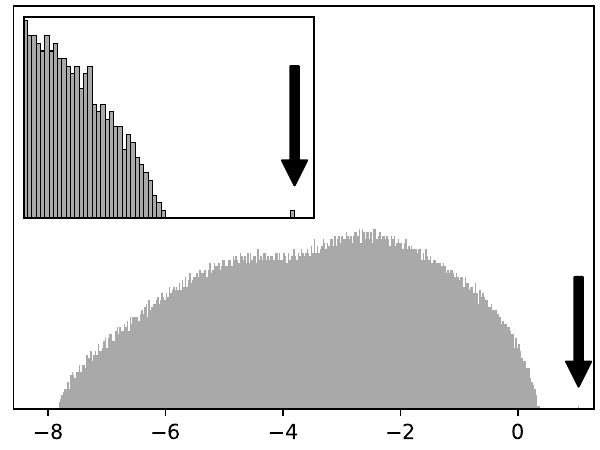}
    \end{tabular}
    \end{center}
    \caption{We compare the direct, whitening, and linearized AMP (LinAMP) spectral algorithms discussed in Section~\ref{sec:numerical} for several signal strengths $\beta_1 < \beta_2 < \beta_3 < \beta_4$ and $N = \num{12000}$, plotting the eigenvalue distribution of the matrix each constructs with the maximum eigenvalue indicated with an arrow. As predicted, the LinAMP algorithm requires the smallest threshold signal strength to observe an outlier eigenvalue, the whitening algorithm the next-smallest strength, and the direct algorithm the largest threshold strength.}
        \label{fig:histograms}
\end{figure}

\clearpage

\bibliographystyle{alpha}
\bibliography{bibdata}

\end{document}